\pgfplotsset{compat=1.15}
\DeclareMathOperator{\Aut}{Aut}
\DeclareMathOperator{\inn}{Inn}
\DeclareMathOperator{\laut}{LAut}
\DeclareMathOperator{\Span}{Span}
\DeclareMathOperator{\supp}{supp}
\DeclareMathOperator{\Max}{Max}
\DeclareMathOperator{\Min}{Min}
\theoremstyle{plain}
\newtheorem{theorem}{Theorem}[section]
\newtheorem{corollary}[theorem]{Corollary}
\newtheorem{proposition}[theorem]{Proposition}
\newtheorem{lemma}[theorem]{Lemma}
\newtheorem*{question}{Question}
\theoremstyle{definition}
\newtheorem{definition}[theorem]{Definition}
\newtheorem{remark}[theorem]{Remark}
\newtheorem{example}[theorem]{Example}
\crefname{theorem}{Theorem}{Theorems}
\crefname{lemma}{Lemma}{Lemmas}
\crefname{corollary}{Corollary}{Corollaries}
\crefname{proposition}{Proposition}{Propositions}
\crefname{definition}{Definition}{Definitions}
\crefname{example}{Example}{Examples}
\crefname{remark}{Remark}{Remarks}
\crefname{conjecture}{Conjecture}{Conjectures}
\crefname{section}{Section}{Sections}
\crefname{equation}{\unskip}{\unskip}
\crefname{enumi}{\unskip}{\unskip}
\crefname{subsection}{Subsection}{Subsections}
\newcommand{\0}{\theta}
\newcommand{\G}{\Gamma}
\newcommand{\af}{\alpha}
\newcommand{\bt}{\beta}
\newcommand{\lb}{\lambda}
\newcommand{\vf}{\varphi}
\newcommand{\dl}{\delta}
\newcommand{\sg}{\sigma}
\newcommand{\C}{\mathcal{C}}
\newcommand{\fC}{\mathfrak{C}}
\newcommand{\fD}{\mathfrak{D}}
\newcommand{\M}{\mathcal{M}}
\newcommand{\AM}{\mathcal{AM}}
\newcommand{\cP}{\mathcal{P}}
\newcommand{\Cr}{\mathrm{Cr}}
\newcommand{\K}{\mathrm{K}}
\newcommand{\m}{{}^{-1}}
\newcommand{\sst}{\subseteq}
\newcommand{\impl}{\Rightarrow}
\newcommand{\lf}{\lfloor}
\newcommand{\rf}{\rfloor}
\newcommand{\wtl}{\widetilde}
\newcommand{\ch}{\mathrm{char}}
\newcommand{\id}{\mathrm{id}}
\begin{document}
	\title[Proper Lie automorphisms of incidence algebras]{Proper Lie automorphisms of incidence algebras}	
	\author{\'Erica Z. Fornaroli}
	\address{Departamento de Matem\'atica, Universidade Estadual de Maring\'a, Maring\'a, PR, CEP: 87020--900, Brazil}
	\email{ezancanella@uem.br}
	
	\author{Mykola Khrypchenko}
	\address{Departamento de Matem\'atica, Universidade Federal de Santa Catarina,  Campus Reitor Jo\~ao David Ferreira Lima, Florian\'opolis, SC, CEP: 88040--900, Brazil}
	\email{nskhripchenko@gmail.com}
	
	\author{Ednei A. Santulo Jr.}
	\address{Departamento de Matem\'atica, Universidade Estadual de Maring\'a, Maring\'a, PR, CEP: 87020--900, Brazil}
	\email{easjunior@uem.br}
	
	\subjclass[2010]{ Primary: 16S50, 17B60, 17B40, 06A07; secondary: 16W10, 05C38}
	\keywords{Lie automorphism, proper Lie automorphism, incidence algebra, maximal chain, crown}
	
	\begin{abstract}
		Let $X$ be a finite connected poset and $K$ a field. We study the question, when all Lie automorphisms of the incidence algebra $I(X,K)$ are proper. Without any restriction on the length of $X$ we find only a sufficient condition involving certain equivalence relation on the set of maximal chains of $X$. For some classes of posets of length one, such as finite connected crownless posets (i.e., without weak crown subposets), crowns and ordinal sums of two antichains we give a complete answer.
	\end{abstract}
	
	\maketitle
	
	\tableofcontents
	
	\section*{Introduction}
	
	A {\it Lie isomorphism} of associative rings $(R,\cdot)$ and $(S,\cdot)$ is an isomorphism of the corresponding Lie rings $\left(R,[\phantom{a},\phantom{a}]\right)$ and $\left(S,[\phantom{a},\phantom{a}]\right)$, where $[a,b]=a\cdot b-b\cdot a$. If $R=S$, a Lie isomorphism $R\to S$ is called a {\it Lie automorphism} of $R$. If, moreover, $R$ and $S$ are algebras, it is natural to require Lie isomorphisms $R\to S$ to be linear. Any bijective map of the form $\phi+\nu$, where $\phi$ is either an isomorphism $R\to S$ or the negative of an anti-isomorphism $R\to S$, and $\nu$ is an additive map on $R$ with values in the center of $S$ whose kernel contains $[R,R]$, is always a Lie isomorphism. Such Lie isomorphisms are called {\it proper}. In most of the cases studied in the literature, these are the only examples of Lie isomorphisms. Indeed, this is true for Lie automorphisms of full matrix rings $M_n(R)$ over division rings $R$ with $\ch(R)\not\in\{2,3\}$ as proved in~\cite{Hua51}, for Lie isomorphisms of primitive~\cite{Martindale63}, simple~\cite{Martindale69simple} and prime rings~\cite{Martindale69}, for Lie automorphisms of upper triangular matrix algebras $T_n(R)$ over commutative rings~\cite{dzD,Cao97} and for Lie isomorphisms of block-triangular matrix algebras over a UFD~\cite{Cecil}.
	
	In~\cite{FKS} we described Lie automorphisms of the incidence algebra $I(X,K)$ of a finite connected poset $X$ over a field $K$. In general, they are not proper as shown in~\cite[Example 5.20]{FKS}, but, for some classes of posets, every Lie automorphism of $I(X,K)$ is proper. For instance, if $X$ is a chain of cardinality $n$, then $I(X,K)\cong T_n(K)$, and thus every Lie automorphism of $I(X,K)$ is proper in view of \cite[Corollary~5.19]{FKS} (see also~\cite[Theorem~6]{dzD}). So the following question arises.
	\begin{question}
		What are the necessary and sufficient conditions on a finite connected poset $X$ such that all Lie automorphisms of $I(X,K)$ are proper?
	\end{question}
	In this paper we give a partial answer to this question. Namely, for a general $X$ we find only a sufficient condition (see \cref{|C(X)-over-tilde|=1}), and for some particular classes of posets of length one $X$ we give a complete answer (see~\cref{P(X)=AM(X)-for-X-without-crown,P(X)-and-AM(X)-for-X=2-crown,P(X)=AM(X)-for-X=K_mn}). 
	
	More precisely, our work is organized as follows. \cref{sec-prelim} serves as a background on posets, incidence algebras and maps on them. In particular, we recall all the necessary definitions from~\cite{FKS} and introduce some new notations. In \cref{sec-proper} we reduce the question of when all Lie automorphisms of $I(X,K)$ are proper to a purely combinatorial property of $X$ dealing with a certain group $\AM(X)$ of bijections on maximal chains of $X$ (see \cref{all-proper-iff-cP(X)=AM(X)}). In our terminology, we prove that every Lie automorphism of $I(X,K)$ is proper if and only if every bijection $\0\in\AM(X)$ is {\it proper}. In \cref{sec-max-chains} we introduce an equivalence relation $\sim$ on maximal chains of $X$ and show that any $\0\in\AM(X)$ induces isomorphisms or anti-isomorphisms between certain subsets of $X$ (the so-called {\it supports} of $\sim$-classes), as proved in \cref{supp(C)-and-supp(0(C))}. Consequently, if all the maximal chains of $X$ are equivalent, then all Lie automorphisms of $I(X,K)$ are proper (see \cref{|C(X)-over-tilde|=1}). The equivalence $\sim$ is just the equality relation whenever $X$ is of length one, hence this situation is treated separately. This is done in \cref{sec-length-one}. We first consider the case when $X$ has no crown subset and give a full description of those $X$ for which any $\0\in\AM(X)$ is proper (see \cref{P(X)=AM(X)-for-X-without-crown}). We then pass to two specific classes of $X$: $n$-crowns $\mathrm{Cr}_n$ and ordinal sums of two anti-chains $\mathrm{K}_{m,n}$. If $X=\mathrm{Cr}_n$, then we explicitly describe the group $\AM(X)$ (see \cref{AM(C_n)-as-semidirect-product}) and its subgroup of proper bijections (see \cref{P(C_n)-as-semidirect-product}). It follows that all $\0\in\AM(\mathrm{Cr}_n)$ are proper exactly when $n=2$ (see \cref{P(X)-and-AM(X)-for-X=2-crown}). If $X=\mathrm{K}_{m,n}$, then there are only proper $\0\in\AM(X)$ as proved in \cref{P(X)=AM(X)-for-X=K_mn}.
	
	\section{Preliminaries}\label{sec-prelim}
	
	\subsection{Automorphisms and anti-automorphisms}
	
	Let $A$ be an algebra. We denote by $\Aut(A)$ the group of (linear) automorphisms of $A$, by $\Aut^-(A)$ the set of (linear) anti-automorphisms of $A$ and by $\Aut^\pm(A)$ the union $\Aut(A)\cup\Aut^-(A)$. Observe that the union is non-disjoint if and only if $A$ is commutative, in which case $\Aut(A)=\Aut^-(A)$. The set $\Aut^\pm(A)$ is a group under the composition, and moreover, if $A$ is non-commutative and $\Aut^-(A)\ne\emptyset$, then $\Aut(A)$ is a (normal) subgroup of $\Aut^\pm(A)$ of index $2$. In particular, $|\Aut(A)|=|\Aut^-(A)|$, whenever $A$ is non-commutative and $\Aut^-(A)\ne\emptyset$. We use the analogous notations $\Aut(X)$, $\Aut^-(X)$ and $\Aut^\pm(X)$ for the group of automorphisms of a poset $X$, the set of anti-automorphisms of $X$ and the group $\Aut(X)\cup\Aut^-(X)$, respectively. As above, $\Aut(X)$ either coincides with $\Aut^\pm(X)$ or is a subgroup of index $2$ in $\Aut^\pm(X)$ (if $X$ is not an anti-chain and $\Aut^-(X)\ne\emptyset$).
	
	\subsection{Posets}
	
	Let $(X,\le)$ be a partially ordered set (which we usually shorten to ``poset'') and $x,y\in X$. The \emph{interval} from $x$ to $y$ is the set $\lfloor x,y \rfloor=\{z\in X : x\leq z\leq y\}$. The poset $X$ is said to be \textit{locally finite} if all the intervals of $X$ are finite. A \textit{chain} in $X$ is a linearly ordered (under the induced order) subset of $X$. The \textit{length} of a finite chain $C\sst X$ is defined to be $|C|-1$. The \textit{length}\footnote{often also called the height} of a finite poset $X$, denoted by $l(X)$, is the maximum length of chains $C\sst X$. A {\it walk} in $X$ is a sequence $x_0,x_1,\dots,x_m\in X$, such that $x_i$ and $x_{i+1}$ are comparable and $l(\lf x_i,x_{i+1}\rf)=1$ (if $x_i\le x_{i+1}$) or $l(\lf x_{i+1},x_i\rf)=1$ (if $x_{i+1}\le x_i$) for all $i=0,\dots,m-1$. A walk $x_0,x_1,\dots,x_m$ is {\it closed} if $x_0=x_m$. A {\it path} is a walk satisfying $x_i\ne x_j$ for $i\ne j$. A {\it cycle} is a closed walk $x_0,x_1,\dots,x_m=x_0$ in which $m\ge 4$ and $x_i=x_j\impl \{i,j\}=\{0,m\}$  for $i\ne j$. We say that $X$ is {\it connected} if for any pair of $x,y\in X$ there is a path $x=x_0,\dots,x_m=y$. We will denote by $\Min(X)$ (resp. $\Max(X)$) the set of minimal (resp. maximal) elements of $X$. If $X$ is connected and $|X|>1$, then $\Min(X)\cap\Max(X)=\emptyset$.
	
	\subsection{Incidence algebras}
	
	Let $X$ be a locally finite poset and $K$ a field. The \emph{incidence algebra}~\cite{Rota64} $I(X,K)$ of $X$ over $K$ is the $K$-space of functions $f:X\times X\to K$ such that $f(x,y)=0$ if $x\nleq y$.  This is a unital $K$-algebra under the convolution product
	$$
	(fg)(x,y)=\sum_{x\leq z\leq y}f(x,z)g(z,y),
	$$
	for any $f, g\in I(X,K)$. Its identity element $\delta$ is given by
	\begin{align*}
	\delta(x,y)=
	\begin{cases}
	1, & x=y,\\
	0, & x\ne y.
	\end{cases}
	\end{align*}
	
	Throughout the rest of the paper $X$ will stand for a connected finite poset. Then $I(X,K)$ admits the standard basis $\{e_{xy} : x\leq y\}$, where
	\begin{align*}	
	e_{xy}(u,v)=
	\begin{cases}
	1, & (u,v)=(x,y),\\
	0, & (u,v)\ne(x,y).
	\end{cases}
	\end{align*}
	We will write $e_{x}=e_{xx}$. Denote also $B=\{e_{xy} : x<y\}$. It is a well-known fact (see~\cite[Theorem~4.2.5]{SpDo}) that the Jacobson radical of $I(X,K)$ is
	$$
	J(I(X,K))=\{f\in I(X,K) : f(x,x)=0 \text{ for all } x\in X\}=\Span_K B.
	$$
	{\it Diagonal elements} of $I(X,K)$ are those $f\in I(X,K)$ satisfying  $f(x,y)=0$ for $x\neq y$. They form a commutative subalgebra $D(X,K)$ of $I(X,K)$ spanned by $\{e_{x} : x \in X\}$. Clearly, each $f\in I(X,K)$ can be uniquely written as $f=f_D+f_J$ with $f_D\in D(X,K)$ and $f_J\in J(I(X,K))$.
	
	\subsection{Decomposition of $\phi\in\Aut^\pm(I(X,K))$} 
	
	Now, we recall the descriptions of automorphisms and anti-automorphisms of $I(X,K)$. Firstly, if $X$ and $Y$ are finite posets and $\lambda: X\to Y$ is an isomorphism (resp. anti-isomorphism), then $\lambda$ \emph{induces} an isomorphism (resp. anti-isomorphism) $\hat\lambda: I(X,K)\to I(Y,K)$ defined by $\hat\lambda(e_{xy})=e_{\lambda(x)\lambda(y)}$ (resp. $\hat\lambda(e_{xy})=e_{\lambda(y)\lambda(x)}$), for all $x\leq y$ in $X$. 
	An element $\sigma\in I(X,K)$ such that $\sigma(x,y)\neq 0$, for all $x\leq y$, and $\sigma(x,y)\sigma(y,z)=\sigma(x,z)$ whenever $x\leq y\leq z$, determines an automorphism $M_{\sigma}$ of $I(X,K)$ by $M_{\sigma}(e_{xy})=\sigma(x,y)e_{xy}$, for all $x\leq y$. Such automorphisms are called \emph{multiplicative}. Any automorphism (anti-automorphism) of $I(X,K)$ decomposes as
	\begin{align}\label{decomposition}
	\phi=\hat\lambda \circ \xi \circ M_{\sigma},
	\end{align} 
	where $\lambda\in\Aut(X)$ (resp. $\Aut^-(X)$), $\xi $ is an inner automorphism and $M_{\sigma}$ is a multiplicative automorphism of $I(X,K)$. For automorphisms this was proved in \cite[Theorem~5]{Baclawski72} and for anti-automorphisms in \cite[Theorem~5]{BFS12} (for more results on automorphisms and anti-automorphisms of incidence algebras see~\cite{BruFS,BruL,BFS12,BFS15,St,Sp,Drozd-Kolesnik07,Kh-aut}).
	
	\subsection{Lie automorphisms of incidence algebras}
	
	In this section we introduce several new notations and recall some definitions and results from \cite{FKS}. 
	
	We denote by $\C(X)$ the set of maximal chains in $X$. Let $C: u_1<u_2<\dots<u_m$ in $\C(X)$. A bijection $\0:B\to B$ is \textit{increasing (resp. decreasing) on $C$} if there exists $D: v_1<v_2<\dots<v_m$ in $\C(X)$ such that $\0(e_{u_iu_j})=e_{v_iv_j}$ for all $1\le i<j\le m$ (resp. $\0(e_{u_iu_j})=e_{v_{m-j+1}v_{m-i+1}}$ for all $1\le i<j\le m$). In this case we write $\0(C)=D$. Moreover, we say that $\0$ \textit{is monotone on maximal chains in $X$} if, for any $C\in \C(X)$, $\0$ is increasing or decreasing on $C$. We denote by $\M(X)$ the set of bijections $B\to B$ which are monotone on maximal chains in $X$. It is easy to see that $\M(X)$ is a subgroup of the symmetric group $S(B)$. Each $\0\in\M(X)$ induces a bijection on $\C(X)$ which maps $C$ to $\0(C)$.
	
	Let $\0:B\to B$ be a bijection and $X^2_<=\{(x,y)\in X^2: x<y\}$. A map $\sg:X^2_<\to K^*$ is \textit{compatible} with $\0$ if $\sg(x,z)=\sg(x,y)\sg(y,z)$ whenever $\0(e_{xz})=\0(e_{xy})\0(e_{yz})$, and $\sg(x,z)=-\sg(x,y)\sg(y,z)$ whenever $\0(e_{xz})=\0(e_{yz})\0(e_{xy})$. 
	
	Let $\0:B\to B$ be a bijection and $\Gamma: u_0,u_1,\dots,u_m=u_0$ a closed walk in $X$. In~\cite{FKS} we introduced the following $4$ functions $X\to\mathds{N}$:
	\begin{align*}
	s^+_{\0,\G}(z)&=|\{i: u_i<u_{i+1}\text{ and }\exists w>z\text{ such that }\0(e_{zw})=e_{u_iu_{i+1}}\}|,\\	
	s^-_{\0,\G}(z)&=|\{i: u_i>u_{i+1}\text{ and }\exists w>z\text{ such that }\0(e_{zw})=e_{u_{i+1}u_i}\}|,\\
	t^+_{\0,\G}(z)&=|\{i: u_i<u_{i+1}\text{ and }\exists w<z\text{ such that }\0(e_{wz})=e_{u_iu_{i+1}}\}|,\\
	t^-_{\0,\G}(z)&=|\{i: u_i>u_{i+1}\text{ and }\exists w<z\text{ such that }\0(e_{wz})=e_{u_{i+1}u_i}\}|.
	\end{align*}
	We call the bijection $\0:B\to B$ \textit{admissible} if 
	\begin{align}\label{s^+-s^-=t^+-t^-}
	s^+_{\0,\G}(z)-s^-_{\0,\G}(z)=t^+_{\0,\G}(z)-t^-_{\0,\G}(z)
	\end{align}
	for any closed walk $\G:u_0,u_1,\dots,u_m=u_0$ in $X$ and for all $z\in X$. In particular, if $X$ is a tree, then any bijection $\0:B\to B$ is admissible. We denote by $\AM(X)$ the set of those $\0\in\M(X)$ which are admissible. 
	
	Let $X=\{x_1,\dots, x_n\}$. Given $\0\in \AM(X)$, a map $\sg:X_<^2\to K^*$ compatible with $\0$ and a sequence $c=(c_1,\dots,c_n)\in K^n$ such that $\sum_{i=1}^nc_i\in K^*$, we define in \cite[Definition~5.17]{FKS} the following \emph{elementary Lie automorphism} $\tau=\tau_{\0,\sg,c}$ of $I(X,K)$ where, for any $e_{xy}\in B$, 
	$$\tau(e_{xy})=\sigma(x,y)\0(e_{xy})$$
	and $\tau|_{D(X,K)}$ is determined by
	$$\tau(e_{x_i})(x_1,x_1)=c_i,$$
	$i=1,\dots,n$, as in Lemmas~5.8 and 5.16 from \cite{FKS}. As in \cite[Definition~5.15]{FKS} we say that $\tau$ \emph{induces} the pair $(\0,\sg)$ and in some situations we write $\0=\0_{\tau}$.
	
	As in \cite{FKS}, we denote by $\laut(I(X,K))$ the group of Lie automorphisms of $I(X,K)$ and by $\wtl\laut(I(X,K))$ its subgroup of elementary Lie automorphisms.  
	We will also use the notation $\inn_1(I(X,K))$ for the subgroup of inner automorphisms consisting of conjugations by $\bt\in I(X,K)$ with $\bt_D=\dl$.
	
	\begin{theorem}\cite[Theorem~4.15]{FKS}\label{semidireto}
		The group $\laut(I(X,K))$ is isomorphic to the semidirect product $\inn_1(I(X,K))\rtimes\wtl\laut(I(X,K))$.
	\end{theorem}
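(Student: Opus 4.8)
The plan is to prove the theorem by realizing $\laut(I(X,K))$ as the internal semidirect product of a normal subgroup $\inn_1(I(X,K))$ and a subgroup $\wtl\laut(I(X,K))$, which amounts to verifying: (a) that $\wtl\laut(I(X,K))$ and $\inn_1(I(X,K))$ are subgroups of $\laut(I(X,K))$; (b) that $\laut(I(X,K))=\inn_1(I(X,K))\cdot\wtl\laut(I(X,K))$; (c) that $\inn_1(I(X,K))\cap\wtl\laut(I(X,K))=\{\id\}$; and (d) that $\inn_1(I(X,K))$ is normal in $\laut(I(X,K))$. Item (a) for $\wtl\laut(I(X,K))$ is built into the construction of elementary Lie automorphisms in \cite{FKS}; for $\inn_1(I(X,K))$ I would first observe that the diagonal-part map $f\mapsto f_D$ is an algebra retraction $I(X,K)\to D(X,K)$, since $(fg)(x,x)=f(x,x)g(x,x)$, with kernel $J(I(X,K))$, so that the condition $\bt_D=\dl$ means precisely $\bt\in\dl+J(I(X,K))$, a group of units, and conjugation by this group is clearly closed under composition and inverses.

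For the factorization (b) I would follow the analysis of \cite{FKS}. Given $\varphi\in\laut(I(X,K))$, one reads off from $\varphi(e_{xy})$ its leading term $\sg_\varphi(x,y)\,\0_\varphi(e_{xy})$, obtaining a bijection $\0_\varphi:B\to B$ that is monotone on maximal chains, i.e.\ $\0_\varphi\in\M(X)$, together with $\sg_\varphi:X^2_<\to K^*$. The crucial — and I expect hardest — point is that $\0_\varphi$ is admissible, $\0_\varphi\in\AM(X)$: here one feeds the commutator relations of $I(X,K)$ taken around closed walks of $X$ through the Lie-homomorphism property of $\varphi$, which forces the identity \eqref{s^+-s^-=t^+-t^-} for $s^\pm_{\0_\varphi,\G}$ and $t^\pm_{\0_\varphi,\G}$; one also checks that $\sg_\varphi$ is compatible with $\0_\varphi$. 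One then forms the elementary Lie automorphism $\tau:=\tau_{\0_\varphi,\sg_\varphi,c}$ with $c$ chosen (using Lemmas~5.8 and 5.16 from \cite{FKS}, explicitly $c_i=\varphi(e_{x_i})(x_1,x_1)$) to match the diagonal behaviour of $\varphi$, so that $\varphi\circ\tau^{-1}$ has trivial associated data. It follows that $\varphi\circ\tau^{-1}$ is an ordinary algebra automorphism of $I(X,K)$, hence by \eqref{decomposition} is inner, and since its leading term and its multiplicative part are trivial the conjugator can be normalized to lie in $\dl+J(I(X,K))$; thus $\varphi\circ\tau^{-1}\in\inn_1(I(X,K))$ and $\varphi\in\inn_1(I(X,K))\cdot\wtl\laut(I(X,K))$.

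For (c): if conjugation by $\bt\in\dl+J(I(X,K))$ equals an elementary Lie automorphism $\tau_{\0,\sg,c}$, then comparing their values on $B$ — the elementary automorphism sends $e_{xy}$ to the single basis vector $\sg(x,y)\0(e_{xy})$, while conjugation by $\bt$ sends $e_{xy}$ to $e_{xy}$ plus a combination of basis vectors supported on strictly larger intervals — forces $\0=\id$ and $\sg\equiv1$; since conjugation by $\bt$ is an associative automorphism, so is the resulting map $\tau_{\id,1,c}$, which forces $c$ to be the value for which $\tau_{\id,1,c}|_{D(X,K)}=\id$, whence $\tau_{\0,\sg,c}=\id$ and hence also the conjugation by $\bt$ is the identity, so the intersection is trivial. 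Finally, for (d) it is enough — given (a), (b), (c) — to check that $\wtl\laut(I(X,K))$ normalizes $\inn_1(I(X,K))$. Here one computes, for $\tau\in\wtl\laut(I(X,K))$ and conjugation $\psi$ by some $\bt\in\dl+J(I(X,K))$, the leading term of $(\tau\circ\psi\circ\tau^{-1})(e_{xy})$: because $\0_\tau$ preserves the interval-containment structure along maximal chains and $\psi$ fixes each basis vector up to terms on larger intervals, this leading term is again $e_{xy}$, so the associated bijection and $\sg$ of $\tau\circ\psi\circ\tau^{-1}$ are trivial; a parallel check on $D(X,K)$ shows its diagonal data is trivial too, so by the uniqueness from (c) one gets $\tau\circ\psi\circ\tau^{-1}\in\inn_1(I(X,K))$. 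Combining (a)--(d) yields $\laut(I(X,K))\cong\inn_1(I(X,K))\rtimes\wtl\laut(I(X,K))$.

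The real obstacle, as flagged, is the admissibility step inside (b): proving that the bijection $\0_\varphi$ attached to an arbitrary Lie automorphism always lies in $\AM(X)$, and that the remainder $\varphi\circ\tau_{\0_\varphi,\sg_\varphi,c}^{-1}$ is not merely a Lie automorphism but an inner one whose conjugator can be taken in $\dl+J(I(X,K))$. This is precisely where the potentially improper behaviour of $I(X,K)$ has to be isolated and shown to be exactly what an elementary Lie automorphism accounts for, and it rests on the closed-walk bookkeeping encoded by the functions $s^\pm_{\0,\G}$, $t^\pm_{\0,\G}$ and condition \eqref{s^+-s^-=t^+-t^-}.
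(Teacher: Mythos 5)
This theorem is not proved in the present paper at all: it is quoted verbatim from \cite[Theorem~4.15]{FKS}, so there is no in-paper argument to compare your proposal against. What I can do is assess your outline on its own terms. The architecture --- realizing $\laut(I(X,K))$ as an internal semidirect product by checking (a) subgroup, (b) factorization, (c) trivial intersection, (d) normality --- is the natural and correct one, and your treatment of (a) and (c) is essentially sound (for (c), the key observation that a conjugation by $\bt\in\dl+J(I(X,K))$ sends $e_{xy}$ to $e_{xy}$ plus terms on strictly larger intervals, while an elementary Lie automorphism sends it to a single scalar multiple of a basis vector, does force $\0=\id$ and $\sg\equiv 1$).

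The genuine gap is in (b), at the sentence ``It follows that $\varphi\circ\tau^{-1}$ is an ordinary algebra automorphism of $I(X,K)$, hence by \eqref{decomposition} is inner.'' Nothing you have said up to that point yields multiplicativity: $\varphi\circ\tau^{-1}$ is a priori only a Lie automorphism whose associated bijection, $\sg$, and diagonal data are trivial, and \eqref{decomposition} applies only to maps already known to be algebra (anti-)automorphisms. Proving that such a ``trivial-data'' Lie automorphism is conjugation by some $\bt\in\dl+J(I(X,K))$ --- typically by constructing $\bt$ inductively along the radical filtration --- is, together with the monotonicity and admissibility of $\0_\varphi$, the actual content of \cite[Theorem~4.15]{FKS}; you correctly flag the admissibility step as hard but beg the question on the multiplicativity step. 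A secondary, organizational slip: in (d) you invoke ``the uniqueness from (c)'' to conclude $\tau\circ\psi\circ\tau^{-1}\in\inn_1(I(X,K))$, but (c) only gives triviality of the intersection; what you actually need there is again the unproven lemma that every Lie automorphism with trivial associated data lies in $\inn_1(I(X,K))$. Once that single lemma is established, both (b) and (d) go through as you describe.
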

	
	\section{Proper Lie automorphisms of $I(X,K)$ and proper bijections of $B$}\label{sec-proper}
	
	Let $\varphi\in\laut(I(X,K))$. Then $\varphi=\psi\circ \tau_{\0,\sg,c}$, where $\psi\in \inn_1(I(X,K))$ and $\tau_{\0,\sg,c}$ is an elementary
	Lie automorphism of $I(X,K)$, by \cref{semidireto}. Note that $\varphi$ is proper if and only if $\tau_{\0,\sg,c}$ is proper. Therefore, all Lie automorphisms of $I(X,K)$ are proper if and only if all elementary Lie automorphisms of $I(X,K)$ are proper.
	
	Let $\varphi=\tau_{\0,\sg,c}$ be an elementary Lie automorphism of $I(X,K)$. Suppose that $\varphi$ is proper, $\varphi=\phi+\nu$, where $\phi\in\Aut(I(X,K))$ or $-\phi\in\Aut^-(I(X,K))$ and $\nu$ is a linear central-valued map on $I(X,K)$ such that $\nu([I(X,K),I(X,K)])=\{0\}$. If $x<y$, then $e_{xy}\in J(I(X,K))=[I(X,K),I(X,K)]$, by \cite[Proposition~2.3]{FKS}. Thus
	\begin{align}\label{vf<}
	\varphi(e_{xy})=\phi(e_{xy}), \forall x<y.
	\end{align}
	By \cite[Corollary~1.3.15]{SpDo}, for each $x\in X$ there is $\alpha_x\in K$ such that
	\begin{align}\label{vf=}
	\varphi(e_{x})=\phi(e_{x})+\alpha_x\delta.
	\end{align}
	
	Suppose firstly that $\phi\in\Aut(I(X,K))$. Then, by \cref{decomposition}, $\phi=\hat\lambda \circ \xi_f \circ M_{\tau}$, where $\lambda\in\Aut(X)$, $\xi_f$ is an inner automorphism and $M_{\tau}$ is a multiplicative automorphism of $I(X,K)$. Thus, by \cref{vf<},
	\begin{align}\label{theta_esp_aut}
	\0(e_{xy})= & \sg(x,y)^{-1}(\hat\lambda \circ \xi_f \circ M_{\tau})(e_{xy})=\sg(x,y)^{-1}(\hat\lambda \circ \xi_f)(\tau(x,y)e_{xy}) \nonumber\\
	= & \sg(x,y)^{-1}\hat\lambda(\tau(x,y)fe_{xy}f^{-1})=\sg(x,y)^{-1}\tau(x,y)\hat\lambda(f)e_{\lambda(x)\lambda(y)}\hat\lambda(f)^{-1}.
	\end{align}
	
	Analogously, if $-\phi\in\Aut^-(I(X,K))$, then, by \cref{decomposition}, $\phi=\hat\lambda \circ \xi_f \circ M_{\tau}$, where $\lambda\in\Aut^-(X)$, $\xi_f$ is an inner automorphism and $M_{\tau}$ is a multiplicative automorphism of $I(X,K)$. Thus, by \cref{vf<},
	\begin{align}\label{theta_esp_-anti-aut}
	\0(e_{xy})= \sg(x,y)^{-1}(\hat\lambda \circ \xi_f \circ M_{\tau})(e_{xy})= \sg(x,y)^{-1}\tau(x,y)\hat\lambda(f)^{-1}e_{\lambda(y)\lambda(x)}\hat\lambda(f).
	\end{align}
	
	\begin{remark}\label{e_xy_conjugado_e_uv}
		Let $x\leq y, u\leq v$ in $X$, $\alpha\in K^{\ast}$ and $h\in I(X,K)$ an invertible element. Note that if $he_{xy}h^{-1}=\alpha e_{uv}$, then $(x,y)=(u,v)$.
	\end{remark}
	
	\begin{definition}\label{propertheta}
		A bijection $\0:B\to B$ is said to be {\it proper} if there exists $\lambda\in \Aut^\pm(X)$ such that $\0(e_{xy})=\hat\lb(e_{xy})$ for all $e_{xy}\in B$. The proper bijections of $B$ form a group, which we denote by $\cP(X)$.
	\end{definition}
	
	\begin{proposition}\label{P(X)-cong-Aut^pm(X)}
		If $|X|>2$, then the group $\cP(X)$ is isomorphic to $\Aut^\pm(X)$.
	\end{proposition}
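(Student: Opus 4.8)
The plan is to exhibit an explicit isomorphism $\Aut^\pm(X)\to\cP(X)$. Define $\Phi\colon\Aut^\pm(X)\to S(B)$ by $\Phi(\lb)=\hat\lb|_B$, where $\hat\lb$ is the (anti-)automorphism of $I(X,K)$ induced by $\lb$, as recalled in the preliminaries; since $\hat\lb$ sends $e_{xy}$ with $x<y$ to either $e_{\lb(x)\lb(y)}$ or $e_{\lb(y)\lb(x)}$, it restricts to a bijection of $B$, so $\Phi$ is well defined. A routine verification on the standard basis, splitting into the four cases according to whether each of $\lb$ and $\mu$ lies in $\Aut(X)$ or in $\Aut^-(X)$, yields $\widehat{\lb\circ\mu}|_B=\hat\lb|_B\circ\hat\mu|_B$; hence $\Phi$ is a group homomorphism. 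By the very definition of $\cP(X)$ in \cref{propertheta}, the image of $\Phi$ is exactly $\cP(X)$, so it suffices to prove that $\Phi$ is injective.

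So let $\lb\in\Aut^\pm(X)$ satisfy $\hat\lb(e_{xy})=e_{xy}$ for all $e_{xy}\in B$; I must show $\lb=\id_X$. If $\lb\in\Aut(X)$, then $e_{\lb(x)\lb(y)}=e_{xy}$ forces $\lb(x)=x$ and $\lb(y)=y$ whenever $x<y$; since $X$ is connected with $|X|>1$, every $z\in X$ is comparable to some $w\ne z$, and therefore $\lb(z)=z$, i.e.\ $\lb=\id_X$ (here $|X|>2$ is not even needed). If instead $\lb\in\Aut^-(X)$, then $e_{\lb(y)\lb(x)}=e_{xy}$ forces $\lb(x)=y$ and $\lb(y)=x$ whenever $x<y$. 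Fix a pair $x<y$ (it exists because $X$ is connected with $|X|>1$). Then no $z\notin\{x,y\}$ can be comparable to $x$ or to $y$: if $z<x$ then also $z<y$, and applying the relation to $z<x$ and to $z<y$ gives $\lb(z)=x$ and $\lb(z)=y$, whence $x=y$, a contradiction; if $x<z$, applying it to $x<z$ and to $x<y$ gives $\lb(x)=z$ and $\lb(x)=y$, whence $z=y$, a contradiction; the two cases with $y$ in place of $x$ are symmetric. Hence $\{x,y\}$ is a connected component of $X$, contradicting that $X$ is connected with $|X|>2$. So the case $\lb\in\Aut^-(X)$ does not occur, and $\Phi$ is injective.

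Combining the two paragraphs, $\Phi$ is a group isomorphism onto $\cP(X)$, which proves the proposition. The only genuinely delicate point is the injectivity argument when $\lb$ reverses the order, and it is precisely there that the hypothesis $|X|>2$ is used: for $|X|=2$ the order-reversing transposition of $X$ induces the identity on the one-element set $B$, so the proposition is sharp. The four-case check that $\Phi$ is a homomorphism is pure bookkeeping, but it should be carried out carefully so that the composition conventions for induced (anti-)automorphisms match on both sides of the claimed identity.
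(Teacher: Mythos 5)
Your proof is correct and follows essentially the same route as the paper: both exhibit the natural surjection $\Aut^\pm(X)\to\cP(X)$ and prove injectivity via connectedness, using $|X|>2$ to rule out an order-reversing map inducing the same bijection of $B$ as an order-preserving one. The only cosmetic difference is that you reduce injectivity to triviality of the kernel (after checking the homomorphism property), whereas the paper directly compares two preimages $\lb$ and $\mu$; the substance of the argument is the same.
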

	\begin{proof} 
		The map sending $\lb\in\Aut(X)$ (resp. $\lb\in\Aut^-(X)$) to $\0\in\cP(X)$, such that $\0(e_{xy})=e_{\lb(x)\lb(y)}$ (resp. $\0(e_{xy})=e_{\lb(y)\lb(x)}$), is an epimorphism from $\Aut^\pm(X)$ to $\cP(X)$. We only need to prove that it is injective. 
		
		It is injective on $\Aut(X)$. Indeed, take $\lb,\mu\in\Aut(X)$ such that $(\lb(x),\lb(y))=(\mu(x),\mu(y))$ for all $x<y$ in $X$. Let $x$ be an arbitrary element of $X$. Since $X$ is connected and $|X|>1$, there is $y\in X$ such that either $y<x$ or $y>x$. In both cases we get $\lb(x)=\mu(x)$. Thus, $\lb=\mu$. Similarly, one proves injectivity on $\Aut^-(X)$.
		
		Assume now that there are $\lb\in\Aut(X)$ and $\mu\in\Aut^-(X)$ such that $(\lb(x),\lb(y))=(\mu(y),\mu(x))$ for all $x<y$ in $X$. We first show that $X$ must have length at most $1$. Indeed, if there were $x<y<z$ in $X$, then we would have $(\lb(x),\lb(y))=(\mu(y),\mu(x))$ and $(\lb(y),\lb(z))=(\mu(z),\mu(y))$, whence $\lb(x)=\lb(z)$, a contradiction. Now, consider a triple $x,y,z\in X$ with $x>z<y$. It follows from $(\lb(z),\lb(x))=(\mu(x),\mu(z))$ and $(\lb(z),\lb(y))=(\mu(y),\mu(z))$ that $\mu(x)=\mu(y)$, a contradiction. Similarly, the existence of a triple $x,y,z\in X$ with $x<z>y$ leads to $\lb(x)=\lb(y)$. If there are two incomparable elements $x,y\in X$, then there exists a sequence $x=x_1,\dots,x_m=y$, where $m\ge 3$ and either $x_1<x_2>x_3$ or $x_1>x_2<x_3$. In both cases we come to a contradiction. Thus, $X$ is of length at most $1$ and any two elements of $X$ are comparable, which means that $X$ is either a singleton, or a chain of length $1$.
	\end{proof}
	
	\begin{remark}
		If $X$ is a chain of length $1$, then $|\cP(X)|=1$, while $|\Aut^\pm(X)|=2$.
	\end{remark}
	
	\begin{proposition}
		We have $\cP(X)\sst\AM(X)$.
	\end{proposition}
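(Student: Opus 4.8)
The plan is to take an arbitrary $\0\in\cP(X)$, fix (by \cref{propertheta}) an element $\lb\in\Aut^\pm(X)$ with $\0(e_{xy})=\hat\lb(e_{xy})$ for all $e_{xy}\in B$, and verify directly the two conditions defining $\AM(X)$: that $\0$ is monotone on maximal chains and that $\0$ is admissible.

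Monotonicity is essentially immediate from the action of $\hat\lb$ on the standard basis. If $\lb\in\Aut(X)$, then a maximal chain $C:u_1<\dots<u_m$ is carried by $\lb$ to the maximal chain $\lb(u_1)<\dots<\lb(u_m)$ and $\0(e_{u_iu_j})=e_{\lb(u_i)\lb(u_j)}$, so $\0$ is increasing on $C$. If $\lb\in\Aut^-(X)$, then $\lb(u_1)>\dots>\lb(u_m)$, so setting $v_k=\lb(u_{m-k+1})$ yields a maximal chain $v_1<\dots<v_m$, and since $\hat\lb$ is an anti-isomorphism, $\0(e_{u_iu_j})=e_{\lb(u_j)\lb(u_i)}=e_{v_{m-j+1}v_{m-i+1}}$ for $i<j$, i.e.\ $\0$ is decreasing on $C$. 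In either case $\0\in\M(X)$.

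For admissibility I would fix a closed walk $\G:u_0,u_1,\dots,u_m=u_0$ and $z\in X$, and set $v=\lb(z)$. The key observation is that, because $\0$ is induced by the (anti)automorphism $\lb$, each condition appearing in the definitions of $s^\pm_{\0,\G}(z)$ and $t^\pm_{\0,\G}(z)$ is realized by at most one $z$: for an index $i$ with $u_i<u_{i+1}$ there is a unique pair $a<b$ in $X$ with $\0(e_{ab})=e_{u_iu_{i+1}}$ (namely $a=\lb^{-1}(u_i),\,b=\lb^{-1}(u_{i+1})$ when $\lb\in\Aut(X)$, and $a=\lb^{-1}(u_{i+1}),\,b=\lb^{-1}(u_i)$ when $\lb\in\Aut^-(X)$, the inequality $a<b$ being automatic since $\lb^{-1}$ preserves, resp.\ reverses, the order), and similarly when $u_i>u_{i+1}$. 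Substituting these unique pairs into the definitions, each of $s^\pm_{\0,\G}(z)$ and $t^\pm_{\0,\G}(z)$ collapses to a count of indices $i\in\{0,\dots,m-1\}$ for which $u_i=v$ or $u_{i+1}=v$; carrying out this bookkeeping one obtains
\begin{align*}
&\bigl(s^+_{\0,\G}(z)-s^-_{\0,\G}(z)\bigr)-\bigl(t^+_{\0,\G}(z)-t^-_{\0,\G}(z)\bigr)\\
&\qquad=|\{i : 0\le i\le m-1,\ u_i=v\}|-|\{i : 0\le i\le m-1,\ u_{i+1}=v\}|,
\end{align*}
and the right-hand side vanishes because $u_0=u_m$ forces the two counts to agree (they are the sizes of the $v$-fibres of the multisets $\{u_0,\dots,u_{m-1}\}$ and $\{u_1,\dots,u_m\}$, which coincide). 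Hence \eqref{s^+-s^-=t^+-t^-} holds for every closed walk and every $z$, so $\0$ is admissible and $\0\in\AM(X)$.

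The only point I expect to require care is the anti-automorphism case: there one must check that the unique pair realizing each of the four conditions really exists (i.e.\ that the required inequality between $a$ and $b$ is forced by the order-reversal of $\lb^{-1}$), and that, although the roles of $u_i$ and $u_{i+1}$ are interchanged relative to the case $\lb\in\Aut(X)$, the four terms still recombine into the same telescoping difference. Apart from this, the argument is a routine unwinding of the definitions of $s^\pm_{\0,\G}$ and $t^\pm_{\0,\G}$.
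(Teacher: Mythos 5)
Your proof is correct, but it takes a genuinely different route from the paper's. The paper disposes of the proposition in two lines by citing earlier results from~\cite{FKS}: $\hat\lb$ (or $-\hat\lb$) is an elementary Lie automorphism by Remark~4.8 there, and the bijection induced by any elementary Lie automorphism lies in $\AM(X)$ by Remark~5.10 there. You instead unwind the definitions of $\M(X)$ and of admissibility and verify them directly, which makes the argument self-contained at the cost of some bookkeeping. Your computation is sound: in the automorphism case the existence conditions in $s^{\pm}_{\0,\G}(z)$, $t^{\pm}_{\0,\G}(z)$ reduce to $u_i=\lb(z)$ or $u_{i+1}=\lb(z)$ exactly as you say, and since every consecutive pair in a walk satisfies exactly one of $u_i<u_{i+1}$, $u_i>u_{i+1}$, the four counts recombine into $|\{i:u_i=v\}|-|\{i:u_{i+1}=v\}|=0$. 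Two small imprecisions worth fixing: you write that each condition is ``realized by at most one $z$'' where you mean that for a fixed $i$ the pair $(z,w)$ is uniquely determined (the sets counted are sets of indices $i$, so only the existence of $w$ matters); and in the anti-automorphism case the roles of $u_i$ and $u_{i+1}$ swap throughout, so the left-hand side equals $|\{i:u_{i+1}=v\}|-|\{i:u_i=v\}|$, the \emph{negative} of your displayed expression --- harmless, since it still telescopes to zero, but your claim that ``the four terms still recombine into the same telescoping difference'' is not literally accurate. Neither point affects the validity of the argument.
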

	\begin{proof}
		Let $\0\in\cP(X)$. Then there exists $\lambda\in\Aut^\pm(X)$ as in \cref{propertheta}. In both cases $\0$ is the restriction of $\hat{\lambda}$ to $B$ and, since $\hat{\lambda}$ or $-\hat{\lambda}$ is an elementary Lie automorphism of $I(X,K)$ by \cite[Remark~4.8]{FKS}, then $\0\in\AM(X)$ by \cite[Remark~5.10]{FKS}. 
	\end{proof}
	
	\begin{lemma}\label{theta_proper_vf_proper}
		Let $\vf\in\wtl\laut(I(X,K))$ inducing a pair $(\0,\sg)$. Then $\0$ is proper if, and only if, $\vf$ is proper.
	\end{lemma}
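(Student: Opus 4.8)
The plan is to prove the two implications separately. The implication ``$\vf$ proper $\Rightarrow$ $\0$ proper'' is essentially already contained in the computations preceding the statement; for the converse I will manufacture, out of $\0=\hat\lb|_B$, an automorphism (or the negative of an anti-automorphism) $\phi$ that agrees with $\vf$ on $B$, and then argue that $\nu:=\vf-\phi$ is central-valued and kills $[I(X,K),I(X,K)]$.

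\textit{Suppose $\vf$ is proper}, say $\vf=\phi+\nu$ with $\phi\in\Aut(I(X,K))$ or $-\phi\in\Aut^-(I(X,K))$, and $\nu$ central-valued with $\nu([I(X,K),I(X,K)])=\{0\}$. If $\phi\in\Aut(I(X,K))$, then \cref{theta_esp_aut} holds with some $\lb\in\Aut(X)$ and some invertible $f$. Since $\0(e_{xy})$ lies in $B$, rewriting \cref{theta_esp_aut} as $\hat\lb(f)\,e_{\lb(x)\lb(y)}\,\hat\lb(f)^{-1}=\af\,\0(e_{xy})$ with $\af\in K^*$ and applying \cref{e_xy_conjugado_e_uv} gives $\0(e_{xy})=e_{\lb(x)\lb(y)}=\hat\lb(e_{xy})$ for all $x<y$; thus $\0$ is proper by \cref{propertheta}. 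If $-\phi\in\Aut^-(I(X,K))$, the same reasoning applied to \cref{theta_esp_-anti-aut} gives $\0(e_{xy})=e_{\lb(y)\lb(x)}=\hat\lb(e_{xy})$ with $\lb\in\Aut^-(X)$, and again $\0$ is proper.

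\textit{Suppose $\0$ is proper}, so $\0=\hat\lb|_B$ for some $\lb\in\Aut^\pm(X)$. Assume first $\lb\in\Aut(X)$, so $\0(e_{xy})=e_{\lb(x)\lb(y)}$; then for $x<y<z$ we have $\0(e_{xy})\0(e_{yz})=e_{\lb(x)\lb(z)}=\0(e_{xz})$, so compatibility of $\sg$ with $\0$ forces $\sg(x,z)=\sg(x,y)\sg(y,z)$, and, extending $\sg$ by $\sg(x,x)=1$, we see that $\sg$ determines a multiplicative automorphism $M_\sg$. Set $\phi:=\hat\lb\circ M_\sg\in\Aut(I(X,K))$; then $\phi(e_{xy})=\sg(x,y)e_{\lb(x)\lb(y)}=\sg(x,y)\0(e_{xy})=\vf(e_{xy})$ for all $x<y$, so $\nu:=\vf-\phi$ vanishes on $B$ and hence on $J(I(X,K))=\Span_K B=[I(X,K),I(X,K)]$ by \cite[Proposition~2.3]{FKS}. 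To finish I need $\nu$ to be central-valued, and here I would use that $\vf$ maps $D(X,K)$ into itself (part of the construction of elementary Lie automorphisms in \cite[Definition~5.17 and Lemmas~5.8, 5.16]{FKS}): writing $\vf(e_x)=\sum_w b^{(x)}_w e_w$ and comparing the two sides of $[\vf(e_x),\vf(e_{yz})]=\vf([e_x,e_{yz}])=(\dl_{xy}-\dl_{xz})\sg(y,z)e_{\lb(y)\lb(z)}$ over all $y<z$ yields $b^{(x)}_{\lb(y)}-b^{(x)}_{\lb(z)}=\dl_{xy}-\dl_{xz}$, whence connectedness of $X$ forces $\vf(e_x)=e_{\lb(x)}+\bt_x\dl$ for some $\bt_x\in K$; thus $\nu(e_x)=\bt_x\dl$ is central and $\vf=\phi+\nu$ is proper.

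If instead $\lb\in\Aut^-(X)$, then $\0(e_{xy})=e_{\lb(y)\lb(x)}$ and, for $x<y<z$, $\0(e_{yz})\0(e_{xy})=e_{\lb(z)\lb(x)}=\0(e_{xz})$, so compatibility now gives $\sg(x,z)=-\sg(x,y)\sg(y,z)$; consequently $\sg'(x,y):=-\sg(x,y)$ (with $\sg'(x,x):=1$) is multiplicative, $\hat\lb\circ M_{\sg'}$ is an anti-automorphism, and $\phi:=-\hat\lb\circ M_{\sg'}$ satisfies $-\phi\in\Aut^-(I(X,K))$ and $\phi(e_{xy})=-\sg'(x,y)e_{\lb(y)\lb(x)}=\sg(x,y)\0(e_{xy})=\vf(e_{xy})$. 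As before $\nu:=\vf-\phi$ kills $[I(X,K),I(X,K)]$, and the analogous commutator computation (now giving $\vf(e_x)=-e_{\lb(x)}+\bt_x\dl$ while $\phi(e_x)=-e_{\lb(x)}$) shows $\nu(e_x)=\bt_x\dl$ is central, so $\vf$ is proper. Producing $\phi$ and checking $\phi=\vf$ on $B$ is routine; the one delicate point, which I expect to be the main obstacle, is the verification that $\nu$ is central-valued, which rests on knowing that an elementary Lie automorphism preserves $D(X,K)$ --- after which connectedness of $X$ pins $\vf(e_x)$ down up to an additive central scalar.
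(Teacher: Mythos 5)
Your proof is correct and follows essentially the same route as the paper: the forward implication via the conjugation formulas \cref{theta_esp_aut}, \cref{theta_esp_-anti-aut} and \cref{e_xy_conjugado_e_uv}, and the converse by extending $\sg$ to a multiplicative (anti-)automorphism, taking $\phi=\hat\lb\circ M_\sg$ (the paper equivalently compares $\psi=\vf\circ M_\sg\m$ with the candidate $\hat\lb$), and using connectedness of $X$ to pin down $\vf(e_x)$ up to a central summand. The only cosmetic difference is that you rederive the diagonal-difference relations from the commutator identity $[\vf(e_x),\vf(e_{yz})]=\vf([e_x,e_{yz}])$ where the paper cites \cite[Lemma~5.7]{FKS}; both arguments rely on $\vf$ preserving $D(X,K)$, which you correctly flag as part of the definition of elementary Lie automorphisms.
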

	\begin{proof}
		Assume first that $\0(e_{xy})=e_{\lambda(x)\lambda(y)}$ for some $\lambda\in\Aut(X)$. Then $\0$ is increasing on any maximal chain in $X$ and thus $\sg(x,z)=\sg(x,y)\sg(y,z)$ for all $x<y<z$. Extending $\sg$ to $X^2_\le=\{(x,y) : x\le y\}$ by means of $\sg(x,x)=1$ for all $x\in X$, we obtain the multiplicative automorphism $M_\sg\in\Aut(I(X,K))$. Consider $\psi=\vf\circ M\m_\sg$. Notice that $\psi(e_{xy})=e_{\lambda(x)\lambda(y)}$ for all $x<y$ and $\psi(e_x)=\vf(e_x)$ for all $x\in X$. It suffices to prove that $\psi$ is proper. Indeed, if $\psi=\phi+\nu$, then $\vf=\psi\circ M_\sg=\phi\circ M_\sg+\nu$ since $M_\sg$ is identity on $D(X,R)$. 
		
		Clearly, $\psi(e_{xy})=\hat\lambda(e_{xy})$ for all $x<y$, so $\hat\lambda$ is a candidate for $\phi$. It remains to prove \cref{vf=} for $\psi$, i.e., to show that $\psi(e_x)=e_{\lambda(x)}+\af_x\dl$ for some $\af_x\in K$. The latter is equivalent to
		\begin{align}\label{psi(e_x)(y_y)=psi(e_x)(lambda(x)_lambda(x))-1}
		\psi(e_x)(y,y)=\psi(e_x)(\lambda(x),\lambda(x))-1
		\end{align}
		for all $y\ne \lambda(x)$. Given $x,y\in X$ with $y\ne \lambda(x)$, we choose a path $\lambda(x)=u_0,\dots,u_m=y$ from $\lambda(x)$ to $y$. Let $v_i\in X$ such that $\lb(v_i)=u_i$, $0\le i\le m$. In particular, $v_0=x$. Then
		\begin{align}\label{psi(e_x)(y_y)=psi(e_x)(lambda(x)_lambda(x))+sum}
		\psi(e_x)(y,y)=\psi(e_x)(\lambda(x),\lambda(x))+\sum_{i=0}^{m-1}(\psi(e_x)(u_{i+1},u_{i+1})-\psi(e_x)(u_i,u_i)).
		\end{align}
		If $u_0<u_1$, then $\0(e_{xv_1})=e_{u_0u_1}$, so $\psi(e_x)(u_1,u_1)-\psi(e_x)(u_0,u_0)=-1$ by~\cite[Lemma~5.7]{FKS}. Similarly, if $u_0>u_1$, then $\0(e_{v_1x})=e_{u_1u_0}$, so $\psi(e_x)(u_0,u_0)-\psi(e_x)(u_1,u_1)=1$. In any case $\psi(e_x)(u_1,u_1)-\psi(e_x)(u_0,u_0)=-1$. Observe that $v_i\ne x$ for all $i>0$. Hence $\psi(e_x)(u_{i+1},u_{i+1})-\psi(e_x)(u_i,u_i)=0$ for all such $i$ by~\cite[Lemma~5.7]{FKS}. It follows that the sum on the right-hand side of \cref{psi(e_x)(y_y)=psi(e_x)(lambda(x)_lambda(x))+sum} has only one non-zero term which equals $-1$, proving \cref{psi(e_x)(y_y)=psi(e_x)(lambda(x)_lambda(x))-1}.
		
		The case $\0(e_{xy})=e_{\lambda(y)\lambda(x)}$, where $\lambda\in\Aut^-(X)$, is similar.
		
		Conversely, suppose that $\vf=\phi+\nu$, where $\phi\in\Aut(I(X,K))$ (resp. $-\phi\in\Aut^-(I(X,K))$) and $\nu$ is a linear central-valued map on $I(X,K)$ annihilating $[I(X,K),I(X,K)]$. It follows from \cref{theta_esp_aut} (resp. \cref{theta_esp_-anti-aut}) and \cref{e_xy_conjugado_e_uv} that $\0(e_{xy})=e_{\lambda(x)\lambda(y)}$ ($\0(e_{xy})=e_{\lambda(y)\lambda(x)}$) for all $e_{xy}\in B$, where $\lambda\in\Aut(X)$ ($\lambda\in\Aut^-(X)$). Therefore, $\0\in\cP(X)$.
	\end{proof}
	
	
	\begin{lemma}\label{2elements}
		Let $\0\in\M(X)$. Let $C_1,C_2\in\C(X)$ such that $\0$ is increasing on $C_1$ and decreasing on $C_2$. If there exist $x,y\in C_1\cap C_2$ and $x<y$, then $x$ is the minimum of $C_1$ and $C_2$ and $y$ is the maximum of $C_1$ and $C_2$.
	\end{lemma}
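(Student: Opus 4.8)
\emph{Set-up.} Write $C_1\colon u_1<\dots<u_m$ with $x=u_i$, $y=u_j$ (so $i<j$), and $C_2\colon w_1<\dots<w_n$ with $x=w_k$, $y=w_l$ (so $k<l$). Let $D_1=\0(C_1)\colon v_1<\dots<v_m$ and $D_2=\0(C_2)\colon z_1<\dots<z_n$ be the image chains; by the definitions of ``increasing'' and ``decreasing'' we have $\0(e_{u_au_b})=e_{v_av_b}$ and $\0(e_{w_aw_b})=e_{z_{n-b+1}z_{n-a+1}}$ for $a<b$. Evaluating $\0(e_{xy})=\0(e_{u_iu_j})=\0(e_{w_kw_l})$ along $C_1$ and along $C_2$ yields $e_{v_iv_j}=e_{z_{n-l+1}z_{n-k+1}}$, hence
\[
v_i=z_{n-l+1},\qquad v_j=z_{n-k+1}.
\]
All four assertions of the lemma will be deduced from these identities, the crucial one being $i=1$ (that $x=\min C_1$). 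Indeed, $j=m$ will follow by the mirror-image argument, and once $i=1$ and $j=m$ the identities read $v_1=z_{n-l+1}$, $v_m=z_{n-k+1}$, where $v_1=\min D_1\in\Min(X)$ and $v_m=\max D_1\in\Max(X)$; as $z_{n-l+1}$ can be minimal in $X$ only if $n-l+1=1$ and $z_{n-k+1}$ can be maximal in $X$ only if $n-k+1=n$, this forces $l=n$ and $k=1$, i.e.\ $y=\max C_2$ and $x=\min C_2$.

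\emph{The auxiliary chain.} To prove $i=1$, assume $i\ge 2$ and consider
\[
C^\circ:=\{u_1,\dots,u_i\}\cup\{w_k,w_{k+1},\dots,w_n\},
\]
the concatenation of the initial part of $C_1$ up to $x$ with the final part of $C_2$ from $x$ on; these meet at $u_i=x=w_k$, and since every $u_a$ ($a\le i$) lies below every $w_b$ ($b\ge k$), $C^\circ$ is a chain $u_1<\dots<u_i=w_k<\dots<w_n$. Both glued pieces are saturated segments of maximal chains, and $\min C^\circ=\min C_1\in\Min(X)$, $\max C^\circ=\max C_2\in\Max(X)$, so $C^\circ$ is a maximal chain of $X$; hence $\0$ is increasing or decreasing on it. Write $\0(C^\circ)\colon t'_1<\dots<t'_N$ with $N=i+(n-k)$; in $C^\circ$ the element $w_c$ occupies position $i+c-k$ and $u_a$ occupies position $a$.

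\emph{Both cases fail.} Suppose $\0$ is increasing on $C^\circ$. For each $d$ with $k<d\le n$, computing $\0(e_{w_kw_d})$ inside $C^\circ$ shows its lower endpoint is $t'_i$, while computing it from $C_2$ gives $\0(e_{w_kw_d})=e_{z_{n-d+1}z_{n-k+1}}$; hence $t'_i=z_{n-d+1}$ for every such $d$. If $n\ge k+2$, the choices $d=k+1$ and $d=k+2$ give $z_{n-k}=z_{n-k-1}$, impossible in a chain; if $n=k+1$, we get $t'_i=z_1=\min D_2\in\Min(X)$, impossible since $i\ge 2$ makes $t'_{i-1}<t'_i$. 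Suppose instead $\0$ is decreasing on $C^\circ$. Then $\0(e_{u_1u_2})=e_{t'_{N-1}t'_N}=e_{v_1v_2}$, so $v_1=t'_{N-1}$ and $v_2=t'_N$. If $i\ge 3$ then also $\0(e_{u_1u_3})=e_{t'_{N-2}t'_N}=e_{v_1v_3}$, giving $v_1=t'_{N-2}$ and hence $t'_{N-1}=t'_{N-2}$, impossible; if $i=2$ then $v_i=v_2=t'_N=\max\0(C^\circ)\in\Max(X)$, contradicting $v_i=z_{n-l+1}$, which is not maximal in $X$ because $l\ge 2$ forces $z_{n-l+1}<z_{n-l+2}$. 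In every case we reach a contradiction, so $i=1$. Running the same construction with the part of $C_2$ up to $y$ concatenated to the part of $C_1$ from $y$ on (which swaps the roles of ``increasing/decreasing'' and ``below/above''), and using the identity $v_j=z_{n-k+1}$, yields $j=m$; the lemma follows.

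\emph{Anticipated difficulty.} The delicate part is the middle step: one must keep track of the two degenerate short-chain configurations ($n=k+1$ in the increasing case, $i=2$ in the decreasing case), where the ``two members of a chain coincide'' contradiction collapses and has to be traded for the observation that a non-extremal element of a maximal chain has been forced into $\Min(X)$ or $\Max(X)$; one must also verify carefully, as above, that the concatenated chains $C^\circ$ are genuinely maximal chains of $X$ (this is where maximality of $C_1$ and $C_2$ is used).
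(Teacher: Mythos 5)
Your proof is correct. It rests on the same underlying idea as the paper's --- produce a third maximal chain that mixes $C_1$ and $C_2$ and exploit the fact that $\0$ (or $\0^{-1}$) must be monotone on it --- but you execute it on the domain side, while the paper works in the codomain. The paper takes $z\in C_1$ and $z'\in C_2$ below $x$ (their existence being equivalent by maximality), notes that $\0(e_{zx})=e_{su}$, $\0(e_{xy})=e_{uv}$, $\0(e_{z'x})=e_{vt}$ automatically assemble into a four-element chain $s<u<v<t$ (because $\0$ respects the products along $C_1$ and along $C_2$), and then applies $\0^{-1}$ on a maximal chain through $s<u<v<t$: in either monotonicity case $\0^{-1}(e_{st})$ becomes a product $e_{zx}e_{z'y}$ or $e_{z'x}e_{zy}$, which is nonzero only if $z'=x$ or $z=x$. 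That version is shorter and has no degenerate sub-cases. Your version, gluing the bottom of $C_1$ to the top of $C_2$ at $x$ to form the maximal chain $C^\circ$, is more explicit but must track positions and dispose of the two short-chain configurations ($n=k+1$ in the increasing case, $i=2$ in the decreasing case), which you handle correctly by observing that the endpoints of a maximal chain are extremal in $X$; your verification that $C^\circ$ is maximal (saturated, with extremal endpoints, in a finite poset) is also valid, and this is indeed the one place where maximality of $C_1$ and $C_2$ enters. Both arguments are sound; the paper's is the more economical, while yours makes the geometric obstruction (why $x$ cannot sit in the interior of either chain) more visible.
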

	\begin{proof}
		We first notice that there is $z\in C_1$ such that $z<x$ ($y<z$) if, and only if, there is $z'\in C_2$ such that $z'<x$ ($y<z'$), by the maximality of $C_1$ 
		and $C_2$. Suppose that are $z\in C_1$ and $z'\in C_2$ such that $z,z'<x$. Then $\0(e_{zx})\0(e_{xy})=\0(e_{zy})$ and $\0(e_{xy})\0(e_{z'x})=\0(e_{z'y})$. Thus, there are $s<u<v<t$ such that $\0(e_{zx})=e_{su}$, $\0(e_{xy})=e_{uv}$, $\0(e_{z'x})=e_{vt}$ and $\0(e_{zy})=e_{sv}$, $\0(e_{z'y})=e_{ut}$. If $\0^{-1}$ is increasing on a maximal chain containing $s<u<v<t$, then $\0^{-1}(e_{st})=\0^{-1}(e_{su})\0^{-1}(e_{ut})=e_{zx}e_{z'y}$ which implies $z'=x$, a contradiction. If $\0^{-1}$ is decreasing on a maximal chain containing $s<u<v<t$, then $\0^{-1}(e_{st})=\0^{-1}(e_{vt})\0^{-1}(e_{sv})=e_{z'x}e_{zy}$ which implies $z=x$, a contradiction. Therefore, $x$ is the minimum of $C_1$ and $C_2$. Analogously, $y$ is the maximum of $C_1$ and $C_2$.
	\end{proof}
	
	\begin{lemma}\label{sigma_exists}
		For any $\0\in\M(X)$, there is $\sg:X^2_<\to K^*$ compatible with $\0$. 
	\end{lemma}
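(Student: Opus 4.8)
The plan is to translate the compatibility conditions into a single system of $\mathbb{Z}/2\mathbb{Z}$-valued equations and then to solve it by prescribing $\sg$ on the covering pairs of $X$.

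\emph{Reduction.} If $\ch(K)=2$ the constant map $\sg\equiv 1$ is compatible, since both required identities then read $1=1$; so assume $\ch(K)\ne 2$. Every compatibility condition has the shape $\sg(x,z)=\pm\sg(x,y)\sg(y,z)$, so it suffices to find $\sg$ with values in $\{1,-1\}\sst K^*$; identifying $\{1,-1\}$ with $\mathbb{Z}/2\mathbb{Z}$ written additively, we must find $\sg\colon X^2_<\to\mathbb{Z}/2\mathbb{Z}$ with $\sg(x,z)=\sg(x,y)+\sg(y,z)+\tau(x,y,z)$ for all $x<y<z$ in $X$. Here $x,y,z$ lie on a common maximal chain, and by \cref{2elements} $\0$ cannot be increasing on one maximal chain through $\{x,y,z\}$ and decreasing on another; hence we may unambiguously set $\tau(x,y,z)=0$ if $\0$ is increasing on every maximal chain through $\{x,y,z\}$ and $\tau(x,y,z)=1$ otherwise. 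A short computation with the products $e_{ab}e_{cd}$ then shows that, for any $x<y<z$, exactly one of the relations $\0(e_{xz})=\0(e_{xy})\0(e_{yz})$ and $\0(e_{xz})=\0(e_{yz})\0(e_{xy})$ holds --- the first when $\tau(x,y,z)=0$, the second when $\tau(x,y,z)=1$ --- so the displayed system is precisely compatibility.

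\emph{Reduction to covering pairs.} I would prescribe $\sg$ first only on the covering pairs and then extend it by propagation: given $x<y$, pick a saturated chain $x=w_0\lessdot w_1\lessdot\dots\lessdot w_l=y$ and put $\sg(x,y)=\sum_{i=0}^{l-1}\sg(w_i,w_{i+1})+(l-1)\ve(P)$, where $\ve(P)\in\mathbb{Z}/2\mathbb{Z}$ is the common $\tau$-value of the triples contained in $P$ (well-defined once $l\ge 2$ by \cref{2elements}, and the term is $0$ when $l=1$). A direct check shows that, \emph{provided this value is independent of the chosen saturated chain}, the resulting $\sg$ satisfies the displayed identity for \emph{every} triple, not merely those appearing in the recursion: for $x<y<z$ one concatenates saturated chains inside $\lf x,y\rf$ and $\lf y,z\rf$ and compares $\tau$-coefficients. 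Thus the whole problem reduces to finding $b\colon\{\text{covering pairs}\}\to\mathbb{Z}/2\mathbb{Z}$ for which $\sum_{e\in P}b_e+(l_P-1)\ve(P)$ depends only on the endpoints of the saturated chain $P$.

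\emph{Choice of $b$ --- the main obstacle.} Again by \cref{2elements}, for a covering pair $e=u\lessdot v$ all maximal chains through $e$ having at least three elements share one direction, giving a well-defined $\ve(e)\in\mathbb{Z}/2\mathbb{Z}$ (set $\ve(e)=0$ if $\{u,v\}$ is itself a maximal chain); moreover $\ve(e)=\ve(P)$ whenever $e$ is an edge of a saturated chain $P$ with $l_P\ge 2$. Hence $(l_P-1)\ve(P)=\sum_{e\in P}\ve(e)+\ve(P)$, so the choice $b_e=\ve(e)$ yields $\sum_{e\in P}b_e+(l_P-1)\ve(P)=\ve(P)$, which is endpoint-dependent \emph{except} when two saturated chains $P,P'$ with the same endpoints $x<y$ satisfy $\ve(P)\ne\ve(P')$. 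One more application of \cref{2elements} shows this occurs only when $x\in\Min(X)$, $y\in\Max(X)$ and $\lf x,y\rf$ supports maximal chains of both directions, and then $P\triangle P'$ is a cycle of the Hasse diagram contained in $\lf x,y\rf$. The remaining, and hardest, step is to add to $b$ a correction supported on such intervals so that $\sum_{e\in P\triangle P'}b_e=(l_P-1)\ve(P)+(l_{P'}-1)\ve(P')$ holds simultaneously for all saturated chains $P,P'$ sharing endpoints; equivalently, one must verify that the right-hand side is a well-defined $\mathbb{Z}/2\mathbb{Z}$-linear functional on the subspace of the cycle space of the Hasse diagram spanned by these ``saturated cycles''. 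Since any edge-weighting realizes an arbitrary functional on a cycle space, this amounts to showing that every $\mathbb{Z}/2\mathbb{Z}$-dependence among saturated cycles is matched by a dependence of the right-hand sides --- and by \cref{2elements} the right-hand sides are controlled entirely by the fact that $\tau$ is constant along each maximal chain, so this comes down to a parity count. Once $b$ is chosen, $\sg$ is defined on all of $X^2_<$, is compatible by the previous step, and reading $\mathbb{Z}/2\mathbb{Z}$ back inside $\{1,-1\}\sst K^*$ completes the proof.
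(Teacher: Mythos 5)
There is a genuine gap, and it sits exactly where the content of the lemma lies. Your construction is only complete \emph{conditionally}: you reduce everything to finding an edge-weighting $b$ on the Hasse diagram whose sums over the ``saturated cycles'' $P\triangle P'$ match the prescribed values $(l_P-1)\ve(P)+(l_{P'}-1)\ve(P')$, and you explicitly defer the verification that these prescriptions are consistent with the $\mathbb{Z}/2\mathbb{Z}$-linear dependencies among such cycles (``this comes down to a parity count''). That count is never performed, and it is not obviously true from what you have written: the saturated cycles can be linearly dependent in complicated ways inside the cycle space of the Hasse diagram, and nothing you have established controls the right-hand sides along an arbitrary dependence. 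So the proof does not close.

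The whole cycle-space apparatus is avoidable, because \cref{2elements} gives you more than you are using: it tells you that for a \emph{pair} $x<y$ with $x\notin\Min(X)$ (equivalently $y\notin\Max(X)$), every maximal chain containing both $x$ and $y$ has the same direction under $\0$. So there is no need to define $\sg$ on covering pairs and propagate along saturated chains --- you can define $\sg(x,y)$ directly for every $(x,y)\in X^2_<$: set $\sg(x,y)=1$ if $x\in\Min(X)$, and otherwise set $\sg(x,y)=1$ or $-1$ according to whether the (unambiguous) direction on the maximal chains through $\{x,y\}$ is increasing or decreasing. Compatibility is then a three-line case check: for $x<y<z$ the triple lies only on increasing chains or only on decreasing chains (again by \cref{2elements}); in the increasing case all three values are $1$; in the decreasing case $\sg(y,z)=\sg(x,z)\cdot(-1)^{[x\in\Min(X)]}$-type bookkeeping gives $\sg(x,z)=-\sg(x,y)\sg(y,z)$ whether or not $x$ is minimal. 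This is the route the paper takes; your $\ch(K)=2$ remark and the observation that values in $\{\pm1\}$ suffice are fine, but the propagation-plus-correction step should be replaced by this direct definition.
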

	\begin{proof}
		Let $\0\in\M(X)$ and $\mathcal C_i$ ($\mathcal C_d$) be the set of all maximal chains in $X$ on which $\0$ is increasing (decreasing). Let $(x,y)\in X^2_<$. If $x\in\Min(X)$, we set $\sigma(x,y)=1$. Otherwise, by \cref{2elements}, both $x$ and $y$ belong only to maximal chains from $\mathcal{C}_i$ or only to maximal chains from $\mathcal{C}_d$. In the former case we set $\sigma(x,y)=1$ and, in the latter one, we set $\sigma(x,y)=-1$.
		
		Let $x<y<z$ in $X$. Again, by \cref{2elements}, those three elements can be simultaneously only in maximal chains from $\mathcal{C}_i$ or only in maximal chains from $\mathcal{C}_d$. If they belong to maximal chains from $\mathcal{C}_i$, then $\sg(x,z)=1=\sg(x,y)\sg(y,z)$. Otherwise, there are two situations to be considered: $x\in\Min(X)$ or $x\not\in\Min(X)$. In the former case, $\sg(x,y)\sg(y,z)=1\cdot(-1)=-1=-\sg(x,z)$. In the latter case, $\sg(x,y)\sg(y,z)=(-1)^2=1=-\sg(x,z)$. Thus, $\sg$ is compatible with $\0$.  
	\end{proof}
	
	\begin{corollary}\label{AM(X)-group}
		The image of the group homomorphism $\wtl\laut(I(X,K))\to\M(X)$, $\vf\mapsto\0_\vf$, coincides with $\AM(X)$. In particular, $\AM(X)$ is a group.
	\end{corollary}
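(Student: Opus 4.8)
I would prove \cref{AM(X)-group} by establishing the set equality between the image of $\vf\mapsto\0_\vf$ and $\AM(X)$ via two inclusions, and then reading off the group structure of $\AM(X)$ formally.

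\emph{Image $\sst\AM(X)$.} By definition $\wtl\laut(I(X,K))$ consists precisely of the elementary Lie automorphisms of $I(X,K)$, and each such $\vf$ is, by \cite[Definition~5.17]{FKS}, of the form $\tau_{\0,\sg,c}$ for some $\0\in\AM(X)$, some $\sg:X^2_<\to K^*$ compatible with $\0$, and some $c=(c_1,\dots,c_n)\in K^n$ with $\sum_{i=1}^n c_i\in K^*$. Such a $\tau$ induces the pair $(\0,\sg)$ in the sense of \cite[Definition~5.15]{FKS}, so $\0_\vf=\0\in\AM(X)$. Hence the image is contained in $\AM(X)$.

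\emph{$\AM(X)\sst$ Image.} Let $\0\in\AM(X)$ be arbitrary. By \cref{sigma_exists} there is a map $\sg:X^2_<\to K^*$ compatible with $\0$. Taking $c=(1,0,\dots,0)$ we have $\sum_{i=1}^n c_i=1\in K^*$, so $\tau:=\tau_{\0,\sg,c}$ is a well-defined elementary Lie automorphism of $I(X,K)$ by \cite[Definition~5.17]{FKS}; thus $\tau\in\wtl\laut(I(X,K))$. Since $\tau$ induces $(\0,\sg)$, we get $\0_\tau=\0$, so $\0$ lies in the image.

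Combining the two inclusions, the image of $\vf\mapsto\0_\vf$ is exactly $\AM(X)$. As this map is a group homomorphism from the group $\wtl\laut(I(X,K))$ into the group $\M(X)$, its image is a subgroup of $\M(X)$; therefore $\AM(X)$ is a subgroup of $\M(X)$, and in particular a group. The one ingredient carrying genuine content is the appeal to \cref{sigma_exists}: it is precisely what guarantees that \emph{every} admissible $\0$ — not merely those already known a priori to arise from a Lie automorphism — can be completed by a compatible $\sg$, and then by any legitimate diagonal data $c$, to an elementary Lie automorphism $\tau_{\0,\sg,c}$. With that in hand, the remainder is just unwinding the definitions imported from \cite{FKS}, so I do not expect a real obstacle here.
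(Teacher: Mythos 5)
Your proof is correct and follows essentially the same route as the paper: both inclusions are obtained exactly as in the paper's argument, with \cref{sigma_exists} supplying the compatible $\sg$ needed to realize an arbitrary $\0\in\AM(X)$ as $\0_{\tau_{\0,\sg,c}}$. The only cosmetic difference is that for the inclusion of the image in $\AM(X)$ the paper cites Lemma~5.4 and Remark~5.10 of \cite{FKS} rather than unwinding the definition of an elementary Lie automorphism, and it takes $c$ arbitrary where you fix $c=(1,0,\dots,0)$; neither affects the substance.
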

	\begin{proof}
		If $\vf\in\wtl\laut(I(X,K))$, then $\0_\vf\in\AM(X)$ by Lemma~5.4 and Remark~5.10 from \cite{FKS}. Let now $\0\in\AM(X)$. By \cref{sigma_exists} there is $\sg:X^2_<\to K^*$ compatible with $\0$. Choose an arbitrary $c=(c_1,\dots,c_n)\in K^n$ with $\sum_{i=1}^n c_i\in K^*$, where $n=|X|$. Then $\tau=\tau_{\0,\sg,c}\in \wtl\laut(I(X,K))$ such that $\0_\tau=\0$.
	\end{proof}
	
	\begin{theorem}\label{all-proper-iff-cP(X)=AM(X)}
		Every Lie automorphism of $I(X,K)$ is proper if and only if $\cP(X)=\AM(X)$.
	\end{theorem}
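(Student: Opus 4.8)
The plan is to chain together the reductions already established in this section. First, by the observation at the beginning of this section: writing an arbitrary $\vf\in\laut(I(X,K))$ as $\vf=\psi\circ\tau_{\0,\sg,c}$ with $\psi\in\inn_1(I(X,K))$ and $\tau_{\0,\sg,c}$ an elementary Lie automorphism (via \cref{semidireto}), one has that $\vf$ is proper if and only if $\tau_{\0,\sg,c}$ is proper. Hence every Lie automorphism of $I(X,K)$ is proper if and only if every elementary Lie automorphism of $I(X,K)$ is proper.

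Next I would reduce properness of elementary Lie automorphisms to properness of bijections of $B$. Every $\vf\in\wtl\laut(I(X,K))$ induces a pair $(\0_\vf,\sg)$, and by \cref{theta_proper_vf_proper} such a $\vf$ is proper precisely when $\0_\vf\in\cP(X)$. Therefore the statement ``every elementary Lie automorphism of $I(X,K)$ is proper'' is equivalent to ``$\0_\vf\in\cP(X)$ for every $\vf\in\wtl\laut(I(X,K))$''. By \cref{AM(X)-group} the map $\vf\mapsto\0_\vf$ has image exactly $\AM(X)$, so this last condition is in turn equivalent to $\AM(X)\sst\cP(X)$.

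Finally, recalling the inclusion $\cP(X)\sst\AM(X)$ established just before \cref{theta_proper_vf_proper}, the condition $\AM(X)\sst\cP(X)$ is equivalent to $\cP(X)=\AM(X)$. Assembling the three equivalences above then yields that every Lie automorphism of $I(X,K)$ is proper if and only if $\cP(X)=\AM(X)$.

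Each link in this chain is supplied by a statement already proved, so no genuine obstacle is expected; the only point requiring care is that the pieces interlock correctly, namely that properness of $\vf$ in \cref{theta_proper_vf_proper} is insensitive to the auxiliary data $\sg$ and $c$ used to realise a given $\0\in\AM(X)$ as some $\0_\vf$ with $\vf\in\wtl\laut(I(X,K))$ — which indeed it is, since \cref{theta_proper_vf_proper} characterises properness of $\vf$ solely in terms of $\0$, so the surjectivity furnished by \cref{AM(X)-group} may be applied without restriction.
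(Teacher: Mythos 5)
Your proposal is correct and follows essentially the same route as the paper: the reduction to elementary Lie automorphisms via \cref{semidireto}, the equivalence ``$\vf$ proper $\iff$ $\0_\vf\in\cP(X)$'' from \cref{theta_proper_vf_proper}, and the realisation of every $\0\in\AM(X)$ as some $\0_\vf$ (the paper invokes \cref{sigma_exists} together with Lemma~5.16 of \cite{FKS} directly, which is exactly the surjectivity packaged in \cref{AM(X)-group} that you cite). Your closing remark that properness of $\vf$ is insensitive to the choice of $\sg$ and $c$ is precisely the point that makes the chain of equivalences interlock, and it is justified as you say.
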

	\begin{proof}
		Suppose that every Lie automorphism of $I(X,K)$ is proper. Let $\0\in\AM(X)$. By \cref{sigma_exists}, there is $\sg:X^2_<\to K^*$ compatible with $\0$ and, by \cite[Lemma~5.16]{FKS}, there is $\vf\in \wtl\laut(I(X,K))$ inducing $(\0,\sg)$. By hypothesis, $\vf$ is proper. Therefore, $\0\in\cP(X)$ by \cref{theta_proper_vf_proper}.
		
		Conversely, suppose that $\cP(X)=\AM(X)$. Let $\vf\in \wtl\laut(I(X,K))$ inducing the pair $(\0,\sg)$. By hypothesis, $\0$ is proper. Therefore, $\vf$ is proper, by \cref{theta_proper_vf_proper}. Thus, every Lie automorphism of $I(X,K)$ is proper.
	\end{proof}
	
	\section{Admissible bijections of $B$ and maximal chains in $X$}\label{sec-max-chains}
	
	Observe that the definitions of the functions $s^{\pm}_{\0,\G}$ and $t^{\pm}_{\0,\G}$ make sense for any sequence $\G:u_0,\dots,u_m$ such that either $u_i<u_{i+1}$ or $u_{i+1}<u_i$ for all $0\le i\le m-1$. We will call such sequences $\G$ {\it semiwalks}. If moreover $u_0=u_m$, then $\G$ will be called a {\it closed semiwalk}.
	\begin{lemma}\label{from-G-to-G'-removement}
		Let $\0\in\M(X)$, $z\in X$ and $\G: u_0,u_1,\dots,u_m=u_0$ a closed semiwalk in $X$. Let $0\le k<k+l\le m$ such that $u_k<u_{k+1}<\dots<u_{k+l}$ or $u_k>u_{k+1}>\dots>u_{k+l}$ and set $\G':u_0,\dots,u_k,u_{k+l},\dots,u_m=u_0$. Then
		\begin{align}\label{s^+-_G-t^+-_G=s^+-_G'-t^+-_G'}
		s^+_{\0,\G}(z)-t^+_{\0,\G}(z)=s^+_{\0,\G'}(z)-t^+_{\0,\G'}(z),\ s^-_{\0,\G}(z)-t^-_{\0,\G}(z)=s^-_{\0,\G'}(z)-t^-_{\0,\G'}(z).
		\end{align}
	\end{lemma}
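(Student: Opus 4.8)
The plan is to track exactly what changes when the monotone stretch $u_k,u_{k+1},\dots,u_{k+l}$ is collapsed to the single edge $\{u_k,u_{k+l}\}$. Every edge of $\G$ other than the $l$ stretch-edges $\{u_k,u_{k+1}\},\dots,\{u_{k+l-1},u_{k+l}\}$ occurs in $\G'$ with the same orientation, and $\G'$ has the one extra edge $\{u_k,u_{k+l}\}$; since each of $s^+_{\0,\G}(z),s^-_{\0,\G}(z),t^+_{\0,\G}(z),t^-_{\0,\G}(z)$ is a sum over the index $i$ of a quantity depending only on the oriented edge $e_{u_iu_{i+1}}$ (or $e_{u_{i+1}u_i}$), passing from $\G$ to $\G'$ changes each of them by the total contribution of the $l$ stretch-edges minus the contribution of the new edge --- and possible repetitions of vertices or edges along the semiwalk are harmless here, precisely because these functions are sums over indices rather than over distinct edges. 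In the case $u_k<u_{k+1}<\dots<u_{k+l}$ all edges involved are increasing steps, so $s^-$ and $t^-$ are literally unchanged and the second identity in \cref{s^+-_G-t^+-_G=s^+-_G'-t^+-_G'} is automatic; in the case $u_k>u_{k+1}>\dots>u_{k+l}$ the roles of $+$ and $-$ are exchanged and the first identity is automatic. So it suffices to prove a single telescoping identity, say for the $s^+$-- and $t^+$--contributions in the increasing case.

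For this I would use that, $\M(X)$ being a group, $\0^{-1}\in\M(X)$, so $\0^{-1}$ is increasing or decreasing on every maximal chain of $X$. The set $\{u_k,\dots,u_{k+l}\}$ is a chain, hence lies in some maximal chain $C$, and accordingly there is a bijection $\rho$ of $C$ onto another maximal chain, order-preserving or order-reversing, with $\0^{-1}(e_{ab})=e_{\rho(a)\rho(b)}$ (resp.\ $\0^{-1}(e_{ab})=e_{\rho(b)\rho(a)}$) for all $a<b$ in $C$. In particular $\0^{-1}$ is evaluated on each stretch-edge and on the new edge alike through the same map $\rho$ at the $l+1$ \emph{distinct} points $\rho(u_k),\dots,\rho(u_{k+l})$. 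Unwinding the definitions --- the clause ``$\exists\,w>z$ with $\0(e_{zw})=e_{u_iu_{i+1}}$'' just says the first coordinate of $\0^{-1}(e_{u_iu_{i+1}})$ is $z$, and ``$\exists\,w<z$ with $\0(e_{wz})=e_{u_iu_{i+1}}$'' says the second coordinate is $z$ --- one then finds that the contribution of $\{u_{k+j},u_{k+j+1}\}$ to $s^+_{\0,\G}(z)-t^+_{\0,\G}(z)$ is $\mathds{1}[\rho(u_{k+j})=z]-\mathds{1}[\rho(u_{k+j+1})=z]$ if $\rho$ is order-preserving, and $\mathds{1}[\rho(u_{k+j+1})=z]-\mathds{1}[\rho(u_{k+j})=z]$ if $\rho$ is order-reversing.

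Summing over $j=0,\dots,l-1$ now telescopes, in both of these cases, to the contribution of the single edge $\{u_k,u_{k+l}\}$ to $s^+_{\0,\G'}(z)-t^+_{\0,\G'}(z)$, which yields the first identity in \cref{s^+-_G-t^+-_G=s^+-_G'-t^+-_G'} in the increasing case; the decreasing case $u_k>\dots>u_{k+l}$ is the same argument with $s^-,t^-$ replacing $s^+,t^+$. I do not anticipate a genuine obstacle --- the proof is essentially a telescoping sum --- but three points need care: it is $\0^{-1}$, not $\0$, whose monotonicity on the maximal chain through the stretch is the relevant input; both the order-preserving and the order-reversing behaviour of $\0^{-1}$ on that chain must be handled (the two telescope in opposite directions but agree with the new edge's contribution in each case); and one must phrase the ``difference of contributions'' reduction carefully enough that repeated vertices along the semiwalk are accounted for correctly.
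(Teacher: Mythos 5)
Your proof is correct and takes essentially the same approach as the paper's: both arguments rest on the fact that $\0^{-1}\in\M(X)$ is increasing or decreasing on a maximal chain containing the collapsed stretch, and then compare the total contribution of the $l$ stretch indices to each of $s^{\pm},t^{\pm}$ with the contribution of the single new edge. Your telescoping-sum bookkeeping is just a more compact presentation of the paper's case analysis on where $z$ sits among the images $v_k,\dots,v_{k+l}$.
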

	\begin{proof}
		Assume that $u_k<u_{k+1}<\dots<u_{k+l}$. There are two cases.
		
		{\it Case 1.} $\0\m$ is increasing on a maximal chain containing $u_k<u_{k+1}<\dots<u_{k+l}$. Then there are $v_k<v_{k+1}<\dots<v_{k+l}$ such that $\0\m(e_{u_iu_j})=e_{v_iv_j}$ for all $k\le i<j\le k+l$.
		
		{\it Case 1.1.} $z=v_i$ for some $k<i<k+l$. If $\0(e_{zw})=e_{u_ju_{j+1}}$ for $w>z$ and $k\le j<k+l$, then $(z,w)=(v_j,v_{j+1})$, which implies that $j=i$ and $w=v_{i+1}$. Similarly $\0(e_{wz})=e_{u_ju_{j+1}}$ for $w<z$ and $k\le j<k+l$ yields $w=v_{i-1}$. Since, moreover, $\0(e_{v_kv_{k+l}})=e_{u_ku_{k+l}}$ and $z\ne v_k$, there is no $w>z$ such that $\0(e_{zw})=e_{u_ku_{k+l}}$. Similarly there is no $w<z$ such that $\0(e_{wz})=e_{u_ku_{k+l}}$. Therefore, $s^+_{\0,\G'}(z)=s^+_{\0,\G}(z)-1$, $t^+_{\0,\G'}(z)=t^+_{\0,\G}(z)-1$, $s^-_{\0,\G'}(z)=s^-_{\0,\G}(z)$ and $t^-_{\0,\G'}(z)=t^-_{\0,\G}(z)$.
		
		{\it Case 1.2.} $z=v_k$. Again, if $\0(e_{zw})=e_{u_ju_{j+1}}$ for $w>z$ and $k\le j<k+l$, then $w=v_{k+1}$. However, there is no $w<z$ such that $\0(e_{wz})=e_{u_ju_{j+1}}$ for some $k\le j<k+l$, but there is a unique $w>z$ (namely, $w=v_{k+l}$) such that $\0(e_{zw})=e_{u_ku_{k+l}}$. This means that $s^\pm_{\0,\G'}(z)=s^\pm_{\0,\G}(z)$ and $t^\pm_{\0,\G'}(z)=t^\pm_{\0,\G}(z)$.
		
		{\it Case 1.3.} $z=v_{k+l}$. This case is similar to Case 1.2. We have $s^\pm_{\0,\G'}(z)=s^\pm_{\0,\G}(z)$ and $t^\pm_{\0,\G'}(z)=t^\pm_{\0,\G}(z)$.
		
		{\it Case 1.4.} $z\not\in\{v_k,\dots,v_{k+l}\}$. Then there is neither $w<z$ such that $\0(e_{wz})=e_{u_ju_{j+1}}$ nor $w>z$ such that $\0(e_{zw})=e_{u_ju_{j+1}}$ for some $k\le j<k+l$. Moreover, there is neither $w<z$ such that $\0(e_{wz})=e_{u_ku_{k+l}}$ nor $w>z$ such that $\0(e_{zw})=e_{u_ku_{k+l}}$. Thus, $s^\pm_{\0,\G'}(z)=s^\pm_{\0,\G}(z)$ and $t^\pm_{\0,\G'}(z)=t^\pm_{\0,\G}(z)$.
		
		{\it Case 2.} $\0\m$ is decreasing on a maximal chain containing $u_k<u_{k+1}<\dots<u_{k+l}$. Then everything from Case 1 remains valid with the replacement of the ``$+$''-functions by their ``$-$''-analogs and vice versa.
		
		In any case $s^+_{\0,\G}(z)-t^+_{\0,\G}(z)$ and $s^-_{\0,\G}(z)-t^-_{\0,\G}(z)$ are invariant under the change of $\G$ for $\G'$. When $u_k>u_{k+1}>\dots>u_{k+l}$, the proof is analogous.
	\end{proof}
	
	\begin{corollary}\label{admissible-on-semiwalk}
		Let $\0\in\M(X)$. Then $\0\in\AM(X)$ if and only if \cref{s^+-s^-=t^+-t^-} holds
		for any $z\in X$ and any closed semiwalk $\G:u_0,\dots,u_m=u_0$, $m\ge 2$. 
	\end{corollary}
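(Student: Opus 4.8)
The plan is to deduce \cref{admissible-on-semiwalk} from \cref{from-G-to-G'-removement} by reducing an arbitrary closed semiwalk to a genuine closed walk without altering the quantities $s^{\pm}_{\0,\cdot}(z)-t^{\pm}_{\0,\cdot}(z)$.

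For the ``if'' direction there is essentially nothing to prove beyond an observation. Every closed walk $\G\colon u_0,\dots,u_m=u_0$ is in particular a closed semiwalk; if $m\ge 1$ then in fact $m\ge 2$, since the relation $l(\lf u_i,u_{i+1}\rf)=1$ forces $u_i\ne u_{i+1}$ and hence rules out $m=1$ when $u_0=u_m$, while for $m=0$ the four functions $s^{\pm}_{\0,\G},t^{\pm}_{\0,\G}$ vanish identically so that \cref{s^+-s^-=t^+-t^-} is trivially true. Therefore, if \cref{s^+-s^-=t^+-t^-} holds for every $z\in X$ and every closed semiwalk with $m\ge 2$, it holds for every closed walk, i.e. $\0\in\AM(X)$.

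For the ``only if'' direction, suppose $\0\in\AM(X)$, fix $z\in X$ and a closed semiwalk $\G\colon u_0,u_1,\dots,u_m=u_0$ with $m\ge 2$. First I would \emph{refine} $\G$ into a closed walk: for each $i$ with $0\le i<m$ the elements $u_i,u_{i+1}$ are strictly comparable, so, $X$ being finite, one can interpolate elements between them to obtain a monotone (increasing or decreasing) sequence $u_i=w^i_0,w^i_1,\dots,w^i_{r_i}=u_{i+1}$ with $l(\lf w^i_j,w^i_{j+1}\rf)=1$ (resp. $l(\lf w^i_{j+1},w^i_j\rf)=1$) for all $j$; concatenating these pieces yields a closed walk $\hat\G$ in $X$ with the same endpoint $u_0=u_m$. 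Since $\0\in\AM(X)$, \cref{s^+-s^-=t^+-t^-} holds for $\hat\G$, that is, $s^+_{\0,\hat\G}(z)-t^+_{\0,\hat\G}(z)=s^-_{\0,\hat\G}(z)-t^-_{\0,\hat\G}(z)$. It remains to transport this identity back to $\G$: the point is that $\G$ is recovered from $\hat\G$ by successively deleting the interior elements $w^i_1,\dots,w^i_{r_i-1}$ of each inserted piece, and each such piece is a monotone run in the closed semiwalk reached so far, so \cref{from-G-to-G'-removement} applies at every step and shows that both $s^+_{\0,\cdot}(z)-t^+_{\0,\cdot}(z)$ and $s^-_{\0,\cdot}(z)-t^-_{\0,\cdot}(z)$ are left unchanged. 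Hence $s^{\pm}_{\0,\G}(z)-t^{\pm}_{\0,\G}(z)=s^{\pm}_{\0,\hat\G}(z)-t^{\pm}_{\0,\hat\G}(z)$, and combining with the previous identity gives \cref{s^+-s^-=t^+-t^-} for $\G$.

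The only delicate point, and the one I would write out most carefully, is the bookkeeping in the reduction $\hat\G\rightsquigarrow\G$: \cref{from-G-to-G'-removement} must be invoked one monotone run at a time, so the clean formulation is an induction on the total number $\sum_i(r_i-1)$ of interpolated elements, deleting a single run at each stage and observing that the run remains a monotone run of the current closed semiwalk. Apart from this, the argument is purely formal — no combinatorial estimate is required — so I expect \cref{admissible-on-semiwalk} to be a short corollary that simply repackages \cref{from-G-to-G'-removement} into the form used later.
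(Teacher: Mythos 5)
Your proof is correct and follows essentially the same route as the paper: the ``if'' direction is immediate, and for the ``only if'' direction one refines the closed semiwalk to a closed walk and transports \cref{s^+-s^-=t^+-t^-} back by deleting the interpolated elements one monotone run at a time via \cref{from-G-to-G'-removement}. The only (harmless) difference is that the paper treats the case $m=2$ separately by citing the proof of Lemma~5.13 of \cite{FKS}, whereas your refinement argument handles it uniformly.
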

	\begin{proof}
		The ``if'' part is trivial. Let us prove the ``only if'' part. Indeed, the case $m=2$ is explained in the proof of \cite[Lemma~5.13]{FKS}, and if $m\ge 3$, then $\G$ can be extended to a closed walk $\Delta$ by inserting increasing (if $u_i<u_{i+1}$) or decreasing (if $u_i>u_{i+1}$) sequences of elements between $u_i$ and $u_{i+1}$ for all $0\le i\le m-1$. Since \cref{s^+-s^-=t^+-t^-} holds for $\Delta$, then by \cref{from-G-to-G'-removement} it holds for $\G$ too.
	\end{proof}
	
	\begin{lemma}\label{x_i=x_j-for-admissible-0}
		Let $\0\in\AM(X)$ and $\G:u_0,\dots,u_m=u_0$, $m\ge 2$, a closed semiwalk. Let also $x_i<y_i$, such that $\0(e_{u_iu_{i+1}})=e_{x_iy_i}$ for $u_i<u_{i+1}$ and $\0(e_{u_{i+1}u_i})=e_{x_iy_i}$ for $u_i>u_{i+1}$. 
		\begin{enumerate}
			\item If $x_i\in\Min(X)$ for some $0\le i\le m-1$, then there is $j\ne i$ such that $x_i=x_j$.\label{x_i-minimal}
			\item If $y_i\in\Max(X)$ for some $0\le i\le m-1$, then there is $j\ne i$ such that $y_i=y_j$.\label{y_i-maximal}
		\end{enumerate}
	\end{lemma}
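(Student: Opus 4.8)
The plan is to reduce \cref{x_i=x_j-for-admissible-0} to a counting identity obtained by applying admissibility to $\0\m$. Since $\AM(X)$ is a group by \cref{AM(X)-group}, we have $\0\m\in\AM(X)$, so by \cref{admissible-on-semiwalk} the equality \cref{s^+-s^-=t^+-t^-} holds for $\0\m$ along the given closed semiwalk $\G$ at every $z\in X$. The first step is to rewrite the four functions $s^\pm_{\0\m,\G}$, $t^\pm_{\0\m,\G}$ in terms of the pairs $(x_i,y_i)$: for $z\in X$ and $w>z$ one has $\0\m(e_{zw})=e_{u_iu_{i+1}}$ (when $u_i<u_{i+1}$), resp. $\0\m(e_{zw})=e_{u_{i+1}u_i}$ (when $u_i>u_{i+1}$), if and only if $e_{zw}=\0(e_{u_iu_{i+1}})=e_{x_iy_i}$, resp. $e_{zw}=\0(e_{u_{i+1}u_i})=e_{x_iy_i}$, i.e. $z=x_i$, $w=y_i$; symmetrically, for $w<z$ the equality $\0\m(e_{wz})=e_{u_iu_{i+1}}$ (resp. $=e_{u_{i+1}u_i}$) means $z=y_i$, $w=x_i$. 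Writing $n^<_x(z)=|\{i:u_i<u_{i+1},\ x_i=z\}|$ and $n^>_x(z)=|\{i:u_i>u_{i+1},\ x_i=z\}|$, and defining $n^<_y(z),n^>_y(z)$ in the same way with $y_i$ instead of $x_i$, this gives $s^+_{\0\m,\G}(z)=n^<_x(z)$, $s^-_{\0\m,\G}(z)=n^>_x(z)$, $t^+_{\0\m,\G}(z)=n^<_y(z)$, $t^-_{\0\m,\G}(z)=n^>_y(z)$, so that \cref{s^+-s^-=t^+-t^-} becomes $n^<_x(z)-n^>_x(z)=n^<_y(z)-n^>_y(z)$ for every $z\in X$.

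With this in hand, \cref{x_i-minimal} is immediate. Fix $i$ with $x_i\in\Min(X)$ and put $z=x_i$. If $y_k=z$ for some $k$, then $x_k<y_k=z\in\Min(X)$, which is impossible; hence $n^<_y(z)=n^>_y(z)=0$, and the identity above forces $n^<_x(z)=n^>_x(z)$. If $u_i<u_{i+1}$, then $i$ is counted in $n^<_x(z)$, so $n^<_x(z)\ge 1$, hence $n^>_x(z)\ge 1$; any $j$ counted in $n^>_x(z)$ satisfies $u_j>u_{j+1}$ (so $j\ne i$) and $x_j=z=x_i$, which is what we want. If $u_i>u_{i+1}$, exchange the roles of $n^<_x$ and $n^>_x$ in this last step. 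Part \cref{y_i-maximal} is the mirror image: take $z=y_i\in\Max(X)$, observe that $x_k=z$ would force $z<y_k$, so $n^<_x(z)=n^>_x(z)=0$, whence $n^<_y(z)=n^>_y(z)$, and finish exactly as before.

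I expect the only genuinely delicate point to be the bookkeeping in the first step: the functions $s^\pm_{\0,\G}$ and $t^\pm_{\0,\G}$ are built from values of $\0$ on basis elements $e_{zw}$, $e_{wz}$, while the lemma records the \emph{forward} images $\0(e_{u_iu_{i+1}})=e_{x_iy_i}$; this is precisely why one must pass to $\0\m$, and one has to keep careful track of which of the inequalities $u_i<u_{i+1}$, $u_i>u_{i+1}$ labels each index so that the ``$+$''- and ``$-$''-counts land in the right place. Once the identity $n^<_x(z)-n^>_x(z)=n^<_y(z)-n^>_y(z)$ is correctly in place, the remainder is a one-line counting argument, as above.
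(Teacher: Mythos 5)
Your proof is correct and follows essentially the same route as the paper's: both apply the admissibility identity \cref{s^+-s^-=t^+-t^-} to $\0\m$ along $\G$ at the point $z=x_i$ (resp.\ $y_i$), after observing that $s^\pm_{\0\m,\G}$ and $t^\pm_{\0\m,\G}$ count exactly the indices with $x_j=z$ and $y_j=z$, and that the $t$-counts (resp.\ $s$-counts) vanish by minimality (resp.\ maximality). The only difference is cosmetic: the paper phrases it as a contradiction from assuming $x_j\ne x_i$ for all $j\ne i$, whereas you derive the existence of $j$ directly from $n^<_x(z)=n^>_x(z)$.
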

	\begin{proof}
		We will prove \cref{x_i-minimal}, the proof of \cref{y_i-maximal} is analogous. Assume that $x_i\ne x_j$ for all $j\ne i$. If $u_i<u_{i+1}$, then $s^+_{\0\m,\G}(x_i)=1$ and $s^-_{\0\m,\G}(x_i)=0$, since $\0\m(e_{x_iy_i})=e_{u_iu_{i+1}}$ and $\0\m(e_{x_iw})\ne e_{u_ju_{j+1}}$ for any $w\ne y_i$ and $j\ne i$ (otherwise $x_i$ would coincide with some $x_j$ for $j\ne i$). Similarly, if $u_i>u_{i+1}$, then $s^+_{\0\m,\G}(x_i)=0$ and $s^-_{\0\m,\G}(x_i)=1$. Obviously, $t^{\pm}_{\0\m,\G}(x_i)=0$, because $x_i\in\Min(X)$. Thus, \cref{s^+-s^-=t^+-t^-} fails for the triple $(\0\m,\G,x_i)$, a contradiction.
	\end{proof}

	\begin{lemma}\label{intersect-of-2-max-chains-incr-and-decr}
		Let $\0\in\M(X)$. Assume that there exist $C,D\in\C(X)$ such that $\0$ is increasing on $C$ and decreasing on $D$. If $x\in C\cap D$, then either $x\in\Min(X)$ or $x\in\Max(X)$.
	\end{lemma}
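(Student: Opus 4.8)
The plan is to assume $x\notin\Min(X)\cup\Max(X)$ and reach a contradiction by constructing a third maximal chain through $x$ and applying \cref{2elements} twice.

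First I would record the elementary fact that in a finite poset the minimum of a maximal chain is a minimal element of $X$ and its maximum is a maximal element (if some $y\in X$ lay below the minimum of a maximal chain $C'$, then $\{y\}\cup C'$ would be a chain strictly larger than $C'$). In particular, since $x$ is not minimal it cannot be the minimum of $C$, so there is $c\in C$ with $c<x$; dually, since $x$ is not maximal there is $d'\in D$ with $x<d'$.

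Then I would extend the chain $c<x<d'$ of $X$ to a maximal chain $E\in\C(X)$, so that $c,x,d'\in E$, and distinguish the two cases afforded by $\0\in\M(X)$. If $\0$ is increasing on $E$, then $\0$ is increasing on $E$, decreasing on $D$, and $x<d'$ with $x,d'\in E\cap D$; by \cref{2elements} the element $x$ must be the minimum of $D$, hence $x\in\Min(X)$, a contradiction. If $\0$ is decreasing on $E$, then $\0$ is increasing on $C$, decreasing on $E$, and $c<x$ with $c,x\in C\cap E$; by \cref{2elements} the element $x$ must be the maximum of $C$, hence $x\in\Max(X)$, a contradiction. As both cases are impossible, $x\in\Min(X)\cup\Max(X)$.

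I do not anticipate a genuine obstacle here: the argument only needs the preliminary remark that the endpoints of a maximal chain are extremal in $X$ (used both to produce $c,d'$ and to finish each case) and some care about which of $C,D,E$ plays the increasing and which the decreasing role when \cref{2elements} is invoked. Should this short route fail for some reason, a heavier fallback is available---tracking $\0$ on $e_{cx},e_{xd'},e_{cd'}$ explicitly via the monotonicity of $\0$ on $E$, and then deriving a contradiction from the monotonicity of $\0^{-1}$ on a suitable maximal chain through the resulting images---but I expect \cref{2elements} to render this unnecessary.
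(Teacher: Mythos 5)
Your proof is correct, and it takes a genuinely different and shorter route than the paper's. The paper assumes $x$ is interior to $C$ (hence to $D$), passes to the image chains $C'=\0(C)$ and $D'=\0(D)$, uses two auxiliary maximal chains (through $x_{i-1}<x_i<y_{j+1}$ and through $y_{j-1}<y_j<x_{i+1}$) to force two comparable common elements of $C'\cap D'$, and only then applies \cref{2elements} --- to $\0^{-1}$, which is increasing on $C'$ and decreasing on $D'$. You instead stay entirely on the domain side: picking $c\in C$ with $c<x$ and $d'\in D$ with $x<d'$ (legitimate, since the endpoints of a maximal chain are extremal in $X$ and $x$ is assumed not to be), extending $c<x<d'$ to one auxiliary maximal chain $E$, and applying \cref{2elements} once, to the pair $(E,D)$ or $(C,E)$ according to whether $\0$ is increasing or decreasing on $E$. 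In each case the third point of $E$ ($c$ in the first case, $d'$ in the second) contradicts the min/max conclusion of \cref{2elements}, so the argument closes cleanly. What your approach buys is economy: one auxiliary chain instead of two, no bookkeeping of indices in the image chains, and no appeal to the monotonicity of $\0^{-1}$. The paper's version, by tracking $u_i$ and $v_{m-j}$ explicitly, sets up notation that is reused almost verbatim in the proof of the subsequent \cref{intersect-of-2-max-chains-incr-and-incr}, which is presumably why the authors chose that route; but as a standalone proof of this lemma yours is preferable.
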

	\begin{proof}
		Let $C:x_1<\dots<x_n$, $D:y_1<\dots<y_m$ and $x=x_i=y_j$ for some $1\le i\le n$ and $1\le j\le m$. Suppose that $1<i<n$. Then $1<j<m$, since otherwise $D$ would not be maximal. There exist maximal chains $C':u_1<\dots<u_n$ and $D':v_1<\dots<v_m$ such that $\0(e_{x_kx_l})=e_{u_ku_l}$ for all $1\le k<l\le n$ and $\0(e_{y_py_q})=e_{v_{m-q+1}v_{m-p+1}}$ for all $1\le p<q\le m$. In particular,
		$\0(e_{x_{i-1}x_i})=e_{u_{i-1}u_i}$, $\0(e_{x_ix_{i+1}})=e_{u_iu_{i+1}}$, $\0(e_{y_{j-1}y_j})=e_{v_{m-j+1}v_{m-j+2}}$, $\0(e_{y_jy_{j+1}})=e_{v_{m-j}v_{m-j+1}}$. Observe that $x_{i-1}<x_i=y_j<y_{j+1}$. Then either $u_i=v_{m-j}$, or $u_{i-1}=v_{m-j+1}$, depending on whether $\0$ is increasing or decreasing on a maximal chain containing $x_{i-1}<x_i<y_{j+1}$. Similarly, considering $y_{j-1}<y_j=x_i<x_{i+1}$ we obtain $v_{m-j+1}=u_{i+1}$ or $v_{m-j+2}=u_i$. If $u_i=v_{m-j}$, then $v_{m-j+1}=u_{i+1}$, so that $\{u_i,u_{i+1}\}\sst C'\cap D'$. However, $\0\m$ is increasing on $C'$ and decreasing on $D'$, so $u_i$ is the common minimum of $C'$ and $D'$ and $u_{i+1}$ is the common maximum of $C'$ and $D'$ by \cref{2elements}. This contradicts the assumption $1<i<n$. Similarly, $u_{i-1}=v_{m-j+1}$ implies $v_{m-j+2}=u_i$, whence $\{u_{i-1},u_i\}\sst C'\cap D'$ leading to a contradiction. 
		
		Thus, $i\in\{1,n\}$. If $i=1$, then necessarily $j=1$, as otherwise $C$ would not be maximal. Similarly, if $i=n$, then $j=m$.
	\end{proof}
	
	\begin{lemma}\label{intersect-of-2-max-chains-incr-and-incr}
		Let $\0\in\M(X)$ and $C,D\in\C(X)$, $C:x_1<\dots<x_n$, $D:y_1<\dots<y_m$. Assume that $x_i=y_j$ for some $1<i<n$ and $1<j<m$. If $\0$ is increasing (resp. decreasing) on $C$, then it is increasing (resp. decreasing) on $D$. Moreover, if $\0(C):u_1<\dots<u_n$ and $\0(D):v_1<\dots<v_m$, then $u_i=v_j$ (resp. $u_{n-i+1}=v_{m-j+1}$).
	\end{lemma}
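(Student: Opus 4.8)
The plan is to prove the ``increasing'' statement in full and then observe that the ``decreasing'' one follows by the same argument after reversing indices. For the first assertion, assume $\0$ is increasing on $C$. Since $\0\in\M(X)$, it is increasing or decreasing on $D$; if it were decreasing on $D$, then \cref{intersect-of-2-max-chains-incr-and-decr} applied to the pair $(C,D)$ would force the common point $x_i=y_j$ into $\Min(X)\cup\Max(X)$, contradicting $x_{i-1}<x_i<x_{i+1}$ (available since $1<i<n$). Hence $\0$ is increasing on $D$ as well, and we may fix $\0(C):u_1<\dots<u_n$ and $\0(D):v_1<\dots<v_m$ with $\0(e_{x_kx_l})=e_{u_ku_l}$ and $\0(e_{y_py_q})=e_{v_pv_q}$. (For the ``decreasing'' variant the same application of \cref{intersect-of-2-max-chains-incr-and-decr} gives that $\0$ is decreasing on $D$.)

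To identify $u_i$ with $v_j$ I would exploit the two three-element chains available at the common vertex $x_i=y_j$, namely $x_{i-1}<x_i<y_{j+1}$ and $y_{j-1}<x_i<x_{i+1}$ (all four neighbours exist because $1<i<n$ and $1<j<m$). Extend the first to a maximal chain $E$; then $\0$ is increasing or decreasing on $E$. Using $\0(e_{x_{i-1}x_i})=e_{u_{i-1}u_i}$ and $\0(e_{x_iy_{j+1}})=\0(e_{y_jy_{j+1}})=e_{v_jv_{j+1}}$ (here $x_i=y_j$), together with the fact that $\0(e_{x_{i-1}y_{j+1}})$ is a single, in particular nonzero, basis element which equals $\0(e_{x_{i-1}x_i})\0(e_{x_iy_{j+1}})$ when $\0$ is increasing on $E$ and $\0(e_{x_iy_{j+1}})\0(e_{x_{i-1}x_i})$ when $\0$ is decreasing on $E$, the product being nonzero forces $u_i=v_j$ in the increasing subcase and $u_{i-1}=v_{j+1}$ in the decreasing subcase. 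Running the same computation on a maximal chain $F$ extending $y_{j-1}<x_i<x_{i+1}$ yields $u_i=v_j$ when $\0$ is increasing on $F$ and $u_{i+1}=v_{j-1}$ when $\0$ is decreasing on $F$.

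It then remains to rule out $\0$ being decreasing on $E$: in that case $u_{i-1}=v_{j+1}$, but combining this with either possible outcome on $F$ gives $v_{j+1}=u_{i-1}<u_i=v_j$ or $v_{j+1}=u_{i-1}<u_{i+1}=v_{j-1}$, each contradicting the linear order $v_1<\dots<v_m$. Hence $\0$ is increasing on $E$ and $u_i=v_j$, finishing the increasing case. The decreasing case is handled verbatim, now with $\0(e_{x_{i-1}x_i})=e_{u_{n-i+1}u_{n-i+2}}$, $\0(e_{x_ix_{i+1}})=e_{u_{n-i}u_{n-i+1}}$ and the analogous identities for $D$, the same three-chain analysis producing $u_{n-i+1}=v_{m-j+1}$. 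The only genuinely delicate point is the bookkeeping in this last step: one must read off the correct product identity from the definition of ``increasing/decreasing on a maximal chain'' and then check that, among the four combinations of the behaviour of $\0$ on $E$ and on $F$, exactly the all-increasing one is compatible with the linear orders on $\0(C)$ and $\0(D)$ — everything else is routine.
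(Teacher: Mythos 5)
Your proposal is correct and follows essentially the same route as the paper: first deduce from \cref{intersect-of-2-max-chains-incr-and-decr} that $\0$ has the same monotonicity on $D$ (since $x_i=y_j$ is neither minimal nor maximal), then analyze the two three-element chains $x_{i-1}<x_i=y_j<y_{j+1}$ and $y_{j-1}<y_j=x_i<x_{i+1}$ to conclude that the only combination compatible with the orders on $\0(C)$ and $\0(D)$ forces $u_i=v_j$. The only cosmetic difference is the organization of the final case analysis (you rule out the decreasing behaviour on $E$ directly, the paper assumes $u_i\ne v_j$ and derives $u_{i-1}=v_{j+1}$ and $u_{i+1}=v_{j-1}$), which is logically equivalent.
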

	\begin{proof}
		Let $\0$ be increasing on $C$. Then it is increasing on $D$ by \cref{intersect-of-2-max-chains-incr-and-decr}. Using the same idea as in the proof of \cref{intersect-of-2-max-chains-incr-and-decr}, we have $\0(e_{x_{i-1}x_i})=e_{u_{i-1}u_i}$, $\0(e_{x_ix_{i+1}})=e_{u_iu_{i+1}}$, $\0(e_{y_{j-1}y_j})=e_{v_{j-1}v_j}$, $\0(e_{y_jy_{j+1}})=e_{v_jv_{j+1}}$. Considering $x_{i-1}<x_i=y_j<y_{j+1}$ we conclude that $u_i=v_j$ or $u_{i-1}=v_{j+1}$. Similarly it follows from $y_{j-1}<y_j=x_i<x_{i+1}$ that $u_i=v_j$ or $v_{j-1}=u_{i+1}$. If $u_i\ne v_j$, then $u_{i-1}=v_{j+1}$ and $v_{j-1}=u_{i+1}$. But this is impossible, since $u_{i-1}<u_{i+1}$ and $v_{j+1}>v_{j-1}$.
		
		The proof for the decreasing case is analogous.
	\end{proof}
	
	
	\begin{definition}
		Let $C,D\in\C(X)$. We say that $C$ and $D$ are {\it linked} if there exists $x\in C\cap D$ such that $x\not\in\Min(X)\sqcup\Max(X)$. Denote by $\sim$ the equivalence relation on $\C(X)$ generated by $\{(C,D)\in\C(X)^2 : C,D\text{ are linked}\}$.
	\end{definition}
	
	\begin{lemma}\label{0-is-inc-or-decr-on-all-equiv-chains}
		Each $\0\in\M(X)$ induces a bijection $\wtl\0$ on $\C(X)/{\sim}$. Moreover, if $\0$ is increasing (resp. decreasing) on $C\in\C(X)$, then it is increasing (resp. decreasing) on any $D\sim C$.
	\end{lemma}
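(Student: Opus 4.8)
The plan is to deduce both assertions from \cref{intersect-of-2-max-chains-incr-and-incr}, after first recording the elementary fact that the minimum (resp. maximum) of a maximal chain of $X$ always belongs to $\Min(X)$ (resp. $\Max(X)$), since otherwise the chain could be prolonged, contradicting maximality. I would start with the ``moreover'' part. Suppose $C\colon x_1<\dots<x_n$ and $D\colon y_1<\dots<y_m$ are linked, say via $x=x_i=y_j\notin\Min(X)\sqcup\Max(X)$. By the fact just recorded, $x$ is not an endpoint of $C$ nor of $D$, i.e.\ $1<i<n$ and $1<j<m$, so \cref{intersect-of-2-max-chains-incr-and-incr} applies and shows that $\0$ is increasing on $D$ whenever it is increasing on $C$, and decreasing on $D$ whenever it is decreasing on $C$. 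Since $\sim$ is generated by the symmetric relation ``is linked to'', every $D\sim C$ is connected to $C$ by a finite sequence of links, and a straightforward induction along this sequence proves that $\0$ has the same monotonicity type on $D$ as on $C$.

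For the first assertion I would show that the bijection of $\C(X)$ induced by $\0$ (which sends $C$ to $\0(C)$) carries linked pairs to linked pairs; then it carries $\sim$-equivalent chains to $\sim$-equivalent chains and hence descends to $\C(X)/{\sim}$. So let $C,D$ be linked, with $x=x_i=y_j$, $1<i<n$, $1<j<m$ as above, and write $\0(C)\colon u_1<\dots<u_n$ and $\0(D)\colon v_1<\dots<v_m$. If $\0$ is increasing on $C$ (equivalently on $D$, by \cref{intersect-of-2-max-chains-incr-and-incr}), then \cref{intersect-of-2-max-chains-incr-and-incr} gives $u_i=v_j$; since $u_{i-1}<u_i<u_{i+1}$, this common element lies in neither $\Min(X)$ nor $\Max(X)$, so $\0(C)$ and $\0(D)$ are linked. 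If $\0$ is decreasing on $C$ and $D$, the same lemma gives $u_{n-i+1}=v_{m-j+1}$, and $1<n-i+1<n$ again forces this element to be an interior point of $\0(C)$ and of $\0(D)$, so $\0(C)$ and $\0(D)$ are linked. Consequently $\wtl\0\colon\C(X)/{\sim}\to\C(X)/{\sim}$, $[C]\mapsto[\0(C)]$, is well defined; applying the same argument to $\0\m\in\M(X)$ shows $\wtl{\0\m}$ is well defined, and since $\0\m\circ\0=\0\circ\0\m=\id$ on $\C(X)$ we get $\wtl{\0\m}\circ\wtl\0=\wtl\0\circ\wtl{\0\m}=\id$ on $\C(X)/{\sim}$, so $\wtl\0$ is a bijection.

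The only point that requires care is the bookkeeping around endpoints: one must check both that the element witnessing a link between $C$ and $D$ is interior to each of them (so that the hypothesis ``$1<i<n$, $1<j<m$'' of \cref{intersect-of-2-max-chains-incr-and-incr} is satisfied) and that the element it produces in $\0(C)\cap\0(D)$ is interior to each of those chains (so that it certifies a link there). Both reductions rely on the same observation that endpoints of maximal chains are forced into $\Min(X)$ or $\Max(X)$; apart from this, the proof is a direct application of \cref{intersect-of-2-max-chains-incr-and-incr} together with the fact that $\M(X)$ is a group.
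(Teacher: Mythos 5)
Your proof is correct and follows essentially the same route as the paper: both parts are deduced from \cref{intersect-of-2-max-chains-incr-and-incr}, with $\wtl{\0\m}$ supplying the inverse of $\wtl\0$. The paper's version is terser, leaving implicit the endpoint bookkeeping (that a link witness is interior to both chains and that the common element it produces in $\0(C)\cap\0(D)$ is again interior) which you spell out explicitly.
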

	\begin{proof}
		Let $C,D\in\C(X)$ be linked. Then $\0(C)$ and $\0(D)$ are linked by \cref{intersect-of-2-max-chains-incr-and-incr}. It follows that $C\sim D$ implies $\0(C)\sim\0(D)$, which induces a map $\wtl\0:\C(X)/{\sim}\to\C(X)/{\sim}$. It is a bijection whose inverse is $\wtl{\0\m}$.
		
		Assume that $\0$ is increasing on $C\in\C(X)$. Then by \cref{intersect-of-2-max-chains-incr-and-incr} it is increasing on any $D\in\C(X)$ which is linked to $C$. By the obvious induction this extends to any $D\sim C$. The decreasing case is similar.
	\end{proof}
	
	\begin{definition}
		Given $\fC\in\C(X)/{\sim}$, we define the {\it support} of $\fC$, denoted $\supp(\fC)$, as the set $\{x\in C : C\in\fC\}$.
	\end{definition}
	
	\begin{remark}
		Let $\fC,\fD\in\C(X)/{\sim}$. If $\fC\ne\fD$, then $\supp(\fC)\cap\supp(\fD)\sst\Min(X)\sqcup\Max(X)$. 
		
		Indeed, assume that $x\in \supp(\fC)\cap\supp(\fD)$, where $x\not\in\Min(X)$ and $x\not\in\Max(X)$. There are $C\in\fC$ and $D\in\fD$ such that $x\in C\cap D$. But then $C$ and $D$ are linked, so $C\sim D$, whence $\fC=\fD$.
	\end{remark}
	
	\begin{theorem}\label{supp(C)-and-supp(0(C))}
		Let $\0\in\AM(X)$ and $\fC\in\C(X)/{\sim}$. Then there exists an isomorphism or an anti-isomorphism of posets $\lb:\supp(\fC)\to\supp(\wtl\0(\fC))$ such that for all $x<y$ from $\supp(\fC)$ one has
		\begin{align}\label{0=hat-lambda-on-supp(C)}
		\0(e_{xy})=\hat\lb(e_{xy}).
		\end{align}
	\end{theorem}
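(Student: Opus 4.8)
The plan is to construct the map $\lb$ on $\supp(\fC)$ by patching together the local isomorphisms/anti-isomorphisms that $\0$ provides on each individual maximal chain $C\in\fC$, using \cref{intersect-of-2-max-chains-incr-and-incr} to guarantee that these patches agree on overlaps. By \cref{0-is-inc-or-decr-on-all-equiv-chains}, $\0$ is either increasing on every $C\in\fC$ or decreasing on every $C\in\fC$; assume without loss of generality the increasing case (the decreasing case is symmetric and produces an anti-isomorphism). For each $C:x_1<\dots<x_n$ in $\fC$, let $\0(C):u_1<\dots<u_n$, and define $\lb$ on $C$ by $\lb(x_i)=u_i$. I would first check this is well-defined, i.e. independent of the choice of $C$: if $x\in C\cap D$ for $C,D\in\fC$, I need $\lb_C(x)=\lb_D(x)$. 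When $x$ is interior to both $C$ and $D$, this is exactly the conclusion $u_i=v_j$ of \cref{intersect-of-2-max-chains-incr-and-incr} applied to the chain-pair along which $x$ is a common interior point; when $C$ and $D$ are only linked through a chain of intermediate maximal chains $C=C_0,C_1,\dots,C_k=D$, I propagate the equality along the chain. The endpoint (minimal/maximal) cases need separate care, and this is where \cref{x_i=x_j-for-admissible-0} and admissibility enter — see below.

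Once $\lb:\supp(\fC)\to\supp(\wtl\0(\fC))$ is well-defined as a map, I would verify it is a bijection onto $\supp(\wtl\0(\fC))$ (its inverse is built the same way from $\0\m$, which is increasing on $\wtl{\0\m}$-classes by \cref{0-is-inc-or-decr-on-all-equiv-chains}), and then that it is an isomorphism of posets. Order-preservation on a single chain is immediate from the construction. For two incomparable elements $x,y\in\supp(\fC)$ lying on different chains of $\fC$, I must rule out $\lb(x)<\lb(y)$ or $\lb(y)<\lb(x)$ in $X$: if $\lb(x)<\lb(y)$ in $\supp(\wtl\0(\fC))$, then pulling back along $\lb\m$ (built from $\0\m$) forces $x<y$, a contradiction. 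Finally, \cref{0=hat-lambda-on-supp(C)} holds because for $x<y$ in $\supp(\fC)$ both lying on a common $C\in\fC$, $\0(e_{xy})=\0(e_{x_ix_j})=e_{u_iu_j}=e_{\lb(x)\lb(y)}=\hat\lb(e_{xy})$ by definition of ``increasing on $C$''; and any comparable pair $x<y$ in $\supp(\fC)$ lies on a common maximal chain, which can be taken inside $\fC$ since the interior elements of that chain link it into $\fC$.

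The main obstacle is the behaviour at the minimal and maximal elements of $X$, which may be shared by chains from \emph{different} $\sim$-classes and by several chains within $\fC$ that need not be directly linked through that shared endpoint. Concretely: suppose $x=\min C=\min D$ with $C,D\in\fC$, $x\in\Min(X)$. I need $\lb_C(x)=\lb_D(x)$, i.e. $u_1=v_1$ where $\0(C):u_1<\dots$ and $\0(D):v_1<\dots$. This is where admissibility is essential. Form the closed semiwalk $\G$ traversing $C$ upward from $x$, crossing over near the top between $C$ and $D$ (they lie in the same $\sim$-class, so there is a finite chain of linked maximal chains connecting them, giving a closed semiwalk based at $x$), and coming back down $D$ to $x$. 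Apply \cref{admissible-on-semiwalk} and \cref{x_i=x_j-for-admissible-0}\cref{x_i-minimal}: the images of the two bottom edges $\0(e_{x\cdot})$ along $C$ and $D$ have the form $e_{x_0y_0}$ and $e_{x_py_p}$ with $x_0,x_p\in\Min(X)$; if $x_0\ne x_p$, \cref{x_i=x_j-for-admissible-0} (applied carefully, controlling that no \emph{other} edge of $\G$ produces a clash) would be contradicted, forcing $x_0=x_p$, i.e. $u_1=v_1$. Making this argument watertight — choosing $\G$ so that the only candidate for a repeated bottom vertex is the pair coming from $C$ and $D$, and handling the propagation through the intermediate linked chains where \cref{intersect-of-2-max-chains-incr-and-incr} already pins down the interior identifications — is the delicate part. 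A cleaner route, which I would pursue if the semiwalk bookkeeping gets unwieldy, is to induct on a path of linked chains $C=C_0,C_1,\dots,C_k=D$ realizing $C\sim D$, at each step using \cref{intersect-of-2-max-chains-incr-and-incr} for the interior identification and then an admissibility argument localized to the two consecutive chains $C_{t},C_{t+1}$ together with their shared endpoint to extend the agreement of $\lb$ to that endpoint; since consecutive chains in the path are \emph{linked}, they share an interior vertex, which shortens each local closed semiwalk and makes \cref{x_i=x_j-for-admissible-0} directly applicable.
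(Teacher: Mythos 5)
Your construction of $\lb$ and its well-definedness argument follow the paper's proof essentially verbatim: the same per-chain definition, \cref{intersect-of-2-max-chains-incr-and-incr} for interior points, and the closed semiwalk through the shared interior vertices $z_j$ of consecutive linked chains together with \cref{x_i=x_j-for-admissible-0} for the minimal/maximal endpoints (your worry about ``other edges producing a clash'' is resolved exactly as you suspect: the intermediate $a_j$ are non-minimal because the $z_j$ are, so only $a_0$ and $a_{k-1}$ are minimal and must therefore coincide). That part is fine.

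There is, however, a genuine gap at the end. To establish \cref{0=hat-lambda-on-supp(C)} (and $\lb(x)<\lb(y)$) for an arbitrary comparable pair $x<y$ in $\supp(\fC)$, you claim that such a pair always lies on a common maximal chain that ``can be taken inside $\fC$ since the interior elements of that chain link it into $\fC$.'' This is false: the interior elements of a maximal chain through $x$ and $y$ need not lie in $\supp(\fC)$ at all, so that chain need not be linked into $\fC$. Concretely, take $X=\{1,2,3,4,5,6\}$ with covering relations $1<3$, $4<3$, $2<4$, $3<5$, $4<6$, $1<6$. Then $\C(X)$ consists of $1<3<5$, $2<4<3<5$, $2<4<6$ and $1<6$; the first three form one $\sim$-class $\fC$ with $\supp(\fC)=X$, while $1<6$ is its own class. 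Here $1<6$ with $1,6\in\supp(\fC)$, $1\in\Min(X)$, $6\in\Max(X)$, yet no chain of $\fC$ contains both, so your per-chain definition says nothing about $\0(e_{16})$ and your argument gives no control over it. This is precisely the case the paper treats separately ($x\in\Min(X)$ and $y\in\Max(X)$): one must run a second admissibility argument on the closed semiwalk $z_0=x,z_1,\dots,z_k=y,z_{k+1}=x$ (whose last step uses the relation $x<y$ directly), applying both parts of \cref{x_i=x_j-for-admissible-0} to force $a_0=a_k$ and $b_{k-1}=b_k$, which yields $\0(e_{xy})=e_{\lb(x)\lb(y)}$ with $\lb(x)<\lb(y)$. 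Without this step the proof is incomplete; with it, your argument coincides with the paper's.
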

	\begin{proof}
		In view of \cref{0-is-inc-or-decr-on-all-equiv-chains} we may assume that $\0$ is increasing on all $C\in\fC$ or decreasing on all $C\in\fC$. Consider the case of an increasing $\0$. We are going to construct the corresponding $\lb:\supp(\fC)\to\supp(\wtl\0(\fC))$. Let $x\in \supp(\fC)$ and $C:x_1<\dots<x_n$ a maximal chain from $\fC$ containing $x$. Denote by $C':u_1<\dots<u_n$ the image of $C$ under $\0$. If $x=x_i$ for some $1\le i\le n$, then we put $\lb(x_i)=u_i$. We still need to show that the definition does not depend on the choice of $C$. 
		
		If $1<i<n$, then this is true by \cref{intersect-of-2-max-chains-incr-and-incr}. 
		
		If $i=1$, then $x\in\Min(X)$. If there exists another maximal chain $D:y_1<\dots<y_m$ from $\fC$ containing $x$, then $x=y_1$. We thus need to show that $u_1=v_1$, where $D':v_1<\dots<v_m$ is the image of $D$ under $\0$. Since $C\sim D$, there are $C=C_1,\dots,C_k=D$ such that $C_j$ and $C_{j+1}$ are linked for all $1\le j\le k-1$. Denote by $z_j$ an element of $C_j\cap C_{j+1}$, $1\le j\le k-1$, which is neither minimal, nor maximal in $X$. Set also $z_0=z_k=x$. Observe that $z_j,z_{j+1}\in C_{j+1}$ for all $0\le j\le k-1$, so that either $z_j\le z_{j+1}$ or $z_j\ge z_{j+1}$. Let $\G:z_0,z_1,\dots,z_k=z_0$. Clearly, $z_0\ne z_j$ and $z_j\ne z_k$ for all $1\le j\le k-1$, as $z_0=z_k\in\Min(X)$, while $z_j\not\in\Min(X)$. Moreover, we will assume that $z_j\ne z_{j+1}$ for all $1\le j\le k-2$, since otherwise we may just remove the repetitions (and at least $2$ elements will remain). Let also $a_j<b_j$ such that $\0(e_{z_jz_{j+1}})=e_{a_jb_j}$ if $z_j<z_{j+1}$, and $\0(e_{z_{j+1}z_j})=e_{a_jb_j}$ if $z_j>z_{j+1}$, $0\le j\le k-1$. Observe that $a_0=u_1$, since $x=z_0<z_1\in C$, and $a_{k-1}=v_1$, since $x=z_k<z_{k-1}\in D$. In particular, $a_0,a_{k-1}\in\Min(X)$. Since $z_j$ is not minimal for all $1\le j\le k-2$, then neither is $a_j$, so that $a_j\not\in\{a_0,a_{k-1}\}$ for such $j$. But then we must have $a_0=a_{k-1}$, i.e., $u_1=v_1$, by \cref{x_i=x_j-for-admissible-0}\cref{x_i-minimal}.
		
		The case $i=n$ is similar. The map $\lb:\supp(\fC)\to\supp(\wtl\0(\fC))$ is thus constructed.
		
		We now prove that $\lb(x)<\lb(y)$ and \cref{0=hat-lambda-on-supp(C)} holds for all $x<y$ from $\supp(\fC)$. By construction, this is true for $x$ and $y$ belonging to the same $C\in\fC$. Let now $x<y$ be arbitrary elements of $\supp(\fC)$. Choose $C\in\fC$ containing $x$ and $D\in\fC$ containing $y$. If $x\not\in\Min(X)$, then any $C'\in\C(X)$ containing $x$ and $y$ is linked to $C$, so that $C'\in\fC$. The case when $y\not\in\Max(X)$ is similar. Let now $x\in\Min(X)$ and $y\in\Max(X)$. As above, we choose $C=C_1,\dots,C_k=D$, such that $C_j,C_{j+1}\in\fC$ are linked for all $1\le j\le k-1$, and $z_j\in C_j\cap C_{j+1}$, $1\le j\le k-1$, which is neither minimal, nor maximal in $X$. We set $z_0=x$, $z_k=y$ and $\G:z_0,z_1,\dots,z_k,z_{k+1}=z_0$. We also assume that $z_j\ne z_{j+1}$ for all $0\le j\le k$ and denote by $a_j<b_j$ the elements satisfying $\0(e_{z_jz_{j+1}})=e_{a_jb_j}$ if $z_j<z_{j+1}$ and $\0(e_{z_{j+1}z_j})=e_{a_jb_j}$ if $z_j>z_{j+1}$, $0\le j\le k$. As above, observe that $a_0,a_k\in\Min(X)$, while $a_1,\dots,a_{k-1}\not\in\Min(X)$. Then $a_0=a_k$ by \cref{x_i=x_j-for-admissible-0}\cref{x_i-minimal}. Similarly it follows from \cref{x_i=x_j-for-admissible-0}\cref{y_i-maximal} that $b_{k-1}=b_k$. But $a_0=\lb(x)$ and $b_{k-1}=\lb(y)$, since $e_{a_0b_0}=\0(e_{z_0z_1})=e_{\lb(z_0)\lb(z_1)}=e_{\lb(x)\lb(z_1)}$ and $e_{a_{k-1}b_{k-1}}=\0(e_{z_{k-1}z_k})=e_{\lb(z_{k-1})\lb(z_k)}=e_{\lb(z_{k-1})\lb(y)}$. Hence, $\lb(x)=a_k<b_k=\lb(y)$ and $\0(e_{xy})=\0(e_{z_{k+1}z_k})=e_{a_kb_k}=e_{\lb(x)\lb(y)}=\hat\lb(e_{xy})$.
		
		It is clear that $\lb$ is a bijection whose inverse is the map $\mu:\supp(\wtl\0(\fC))\to\supp(\fC)$ corresponding to $\0\m$. Thus, $\lb$ is an isomorphism between $\supp(\fC)$ and $\supp(\wtl\0(\fC))$.
		
		The case of a decreasing $\0$ is analogous.
	\end{proof}
	
	The following example shows that the admissibility of $\0$ in \cref{supp(C)-and-supp(0(C))} cannot be dropped.
	\begin{example}
		Let $X=\{1,\dots,10,1',\dots 9',7''\}$ with the following Hasse diagram.
		\begin{center}
			\begin{tikzpicture}[line cap=round,line join=round,>=triangle 45,x=0.8cm,y=0.8cm]
			\draw  (-4,0)-- (-4,1);
			\draw  (-4,1)-- (-4,3);
			\draw  (-5,0)-- (-4,1);
			\draw  (-4,1)-- (-2,3);
			\draw  (-2,3)-- (-2,4);
			\draw  (-4,3)-- (-2,1);
			\draw  (-2,1)-- (-1,0);
			\draw  (-2,0)-- (-2,1);
			\draw  (-2,1)-- (-2,3);
			\draw  (-2,3)-- (0,5);
			\draw  (2,4)-- (2,3);
			\draw  (2,3)-- (2,1);
			\draw  (2,1)-- (2,0);
			\draw  (1,0)-- (2,1);
			\draw  (2,1)-- (4,3);
			\draw  (2,3)-- (4,1);
			\draw  (4,1)-- (5,0);
			\draw  (4,1)-- (4,0);
			\draw  (4,2)-- (4,1);
			\draw  (0,5)-- (2,3);
			\draw [fill=black] (-1,0) circle (1.5pt);
			\draw[color=black] (-1,-0.3) node {$4$};
			\draw [fill=black] (-2,1) circle (1.5pt);
			\draw[color=black] (-1.7,1) node {$6$};
			\draw [fill=black] (-2,0) circle (1.5pt);
			\draw[color=black] (-2,-0.3) node {$3$};
			\draw [fill=black] (-2,3) circle (1.5pt);
			\draw[color=black] (-1.7,3) node {$8$};
			\draw [fill=black] (-2,4) circle (1.5pt);
			\draw[color=black] (-2,4.3) node {$9$};
			\draw [fill=black] (-4,3) circle (1.5pt);
			\draw[color=black] (-4,3.3) node {$7$};
			\draw [fill=black] (-4,1) circle (1.5pt);
			\draw[color=black] (-4.3,1) node {$5$};
			\draw [fill=black] (-4,0) circle (1.5pt);
			\draw[color=black] (-4,-0.3) node {$2$};
			\draw [fill=black] (-5,0) circle (1.5pt);
			\draw[color=black] (-5,-0.3) node {$1$};
			\draw [fill=black] (0,5) circle (1.5pt);
			\draw[color=black] (0,5.3) node {$10$};
			\draw [fill=black] (2,3) circle (1.5pt);
			\draw[color=black] (1.6,3) node {$8'$};
			\draw [fill=black] (2,4) circle (1.5pt);
			\draw[color=black] (2.1,4.3) node {$9'$};
			\draw [fill=black] (2,1) circle (1.5pt);
			\draw[color=black] (1.6,1) node {$6'$};
			\draw [fill=black] (2,0) circle (1.5pt);
			\draw[color=black] (2.1,-0.3) node {$3'$};
			\draw [fill=black] (1,0) circle (1.5pt);
			\draw[color=black] (1.1,-0.3) node {$4'$};
			\draw [fill=black] (4,1) circle (1.5pt);
			\draw[color=black] (4.4,1) node {$5'$};
			\draw [fill=black] (4,3) circle (1.5pt);
			\draw[color=black] (4.1,3.3) node {$7'$};
			\draw [fill=black] (5,0) circle (1.5pt);
			\draw[color=black] (5.1,-0.3) node {$1'$};
			\draw [fill=black] (4,0) circle (1.5pt);
			\draw[color=black] (4.1,-0.3) node {$2'$};
			\draw [fill=black] (4,2) circle (1.5pt);
			\draw[color=black] (4.1,2.3) node {$7''$};
			\end{tikzpicture}
		\end{center}
		Then $\C(X)/{\sim}$ consists of $2$ classes whose supports are $Y=\{1,\dots,10\}$ and $Z=\{1',\dots,9',10,7''\}$. Observe that there exists $\0\in\M(X)$ mapping one ${\sim}$-class to another. It is defined as follows: $\0(e_{ij})=e_{i'j'}$ for all $i\le j$ in $X$ with $(i,j)\ne (5,7)$, $\0(e_{i'j'})=e_{ij}$ for all $i'\le j'$ in $X$ with $(i',j')\ne(5',7'')$, $\0(e_{57})=e_{5'7''}$ and $\0(e_{5'7''})=e_{57}$ (to make the definition shorter, we set $10':=10$). However, $Y$ and $Z$ are not isomorphic or anti-isomorphic because $|Y|\ne|Z|$. The reason is that $\0\not\in\AM(X)$. Indeed, for $\G:5<7>6<8>5$ we have $s^\pm_{\0,\G}(7')=0$, $t^+_{\0,\G}(7')=0$ and $t^-_{\0,\G}(7')=1$.
	\end{example}
	
	As a consequence of \cref{supp(C)-and-supp(0(C)),all-proper-iff-cP(X)=AM(X)} we have the following result which generalizes \cite[Corollary 5.19]{FKS}, where $X$ was a chain.
	\begin{corollary}\label{|C(X)-over-tilde|=1}
		If $|\C(X)/{\sim}|=1$, then each $\vf\in\laut(I(X,K))$ is proper.
	\end{corollary}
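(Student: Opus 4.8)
The plan is to derive this immediately from \cref{all-proper-iff-cP(X)=AM(X)} and \cref{supp(C)-and-supp(0(C))}. By \cref{all-proper-iff-cP(X)=AM(X)}, it is enough to prove that $\cP(X)=\AM(X)$; and since we already know $\cP(X)\sst\AM(X)$, the task reduces to showing that every $\0\in\AM(X)$ lies in $\cP(X)$, i.e.\ is proper.

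So I would fix $\0\in\AM(X)$ and use the hypothesis, which says that $\C(X)$ consists of a single $\sim$-class $\fC$. The first small point is that $\supp(\fC)=X$: since $X$ is finite, every $x\in X$ lies in some maximal chain $C\in\C(X)=\fC$, whence $x\in\supp(\fC)$. Moreover, as there is only one class, necessarily $\wtl\0(\fC)=\fC$, so $\supp(\wtl\0(\fC))=X$ as well.

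Now I would apply \cref{supp(C)-and-supp(0(C))} to $\0$ and $\fC$: it produces an isomorphism or anti-isomorphism of posets $\lb\colon\supp(\fC)\to\supp(\wtl\0(\fC))$ --- that is, an element $\lb\in\Aut^\pm(X)$ --- with $\0(e_{xy})=\hat\lb(e_{xy})$ for all $x<y$ in $\supp(\fC)=X$, equivalently for all $e_{xy}\in B$. By \cref{propertheta} this means exactly $\0\in\cP(X)$, so $\AM(X)\sst\cP(X)$, hence $\cP(X)=\AM(X)$, and \cref{all-proper-iff-cP(X)=AM(X)} gives the claim. There is essentially no real obstacle at this stage: all the substance has been absorbed into \cref{supp(C)-and-supp(0(C))}, and the only things that need checking are the two easy identifications $\supp(\fC)=\supp(\wtl\0(\fC))=X$, which rely on the finiteness (and connectedness) of $X$ and on the $\sim$-class being unique.
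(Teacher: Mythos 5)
Your proof is correct and follows exactly the route the paper intends: the corollary is stated there as an immediate consequence of \cref{supp(C)-and-supp(0(C))} and \cref{all-proper-iff-cP(X)=AM(X)}, and your argument (identifying $\supp(\fC)=\supp(\wtl\0(\fC))=X$ and reading off $\lb\in\Aut^\pm(X)$ with $\0=\hat\lb|_B$) is the intended derivation.
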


	Observe, however, that the condition $|\C(X)/{\sim}|=1$ is not necessary for all $\vf\in\laut(I(X,K))$ to be proper, as the following example shows.
	
	\begin{example}
		Let $X=\{1,2,3,4,5,6\}$ with the following Hasse diagram.
		\begin{center}
			\begin{tikzpicture}[line cap=round,line join=round,>=triangle 45,x=0.8cm,y=0.8cm]
			\draw (-2,2)-- (-1,1);
			\draw (-1,1)-- (0,0);
			\draw (0,0)-- (1,1);
			\draw (1,1)-- (2,2);
			\draw (-1,1)-- (0,2);
			\draw (0,2)-- (1,1);
			\draw [fill=black] (-1,1) circle (1.5pt);
			\draw (-1.3,1) node {2};
			\draw [fill=black] (1,1) circle (1.5pt);
			\draw (1.3,1) node {3};
			\draw [fill=black] (-2,2) circle (1.5pt);
			\draw (-2,2.3) node {4};
			\draw [fill=black] (2,2) circle (1.5pt);
			\draw (2,2.3) node {6};
			\draw [fill=black] (0,0) circle (1.5pt);
			\draw (0,-0.3) node {1};
			\draw [fill=black] (0,2) circle (1.5pt);
			\draw (0,2.3) node {5};
			\end{tikzpicture}
		\end{center}
		Note that $\C(X)/{\sim}$ consists of $2$ classes whose supports are $Y=\{1,2,4,5\}$ and $Z=\{1,3,5,6\}$. For any $\0\in\AM(X)$ there are $2$ possibilities for the corresponding isomorphisms $\lb_1$ and $\lb_2$ between the supports: either $\lb_1:Y\to Y$ and $\lb_2:Z\to Z$, or $\lb_1:Y\to Z$ and $\lb_2:Z\to Y$. In the former case $\lb_1=\id_Y$ and $\lb_2=\id_Z$, and in the letter case $\lb_2=\lb\m_1$, where $\lb_1$ maps an element $y\in Y$ to the element $z\in Z$ which is symmetric to $y$ with respect to the vertical line passing through the vertices $1$ and $5$. In both cases $\lb_1$ and $\lb_2$ are the restrictions of an automorphism of $X$ to $Y$ and $Z$, respectively.
	\end{example}
	
	\section{Sets of length one}\label{sec-length-one}
	
	\subsection{Admissible bijections of $B$ and crowns in $X$}
	Before proceeding to the case $l(X)=1$ we will prove a useful fact which holds for $X$ of an arbitrary length.
	\begin{definition}
		Let $n$ be an integer greater than $1$. By a {\it weak $n$-crown} we mean a poset $P=\{x_1,\dots,x_{n},y_1,\dots,y_{n}\}$ where 
		\begin{align}\label{order-in-a-crown}
		x_i<y_i\mbox{ for all }1\le i\le n,\ x_{i+1}<y_i\mbox{ for all }1\le i\le n-1\mbox{ and }x_1<y_{n}.
		\end{align}
		An {\it $n$-crown} is a weak $n$-crown which has no other pairs of distinct comparable elements except \cref{order-in-a-crown}. It is thus fully determined by $n$ up to an isomorphism and will be denoted by $\Cr_n$. A poset $P$ is called a {\it weak crown (resp. crown)}, if it is a weak $n$-crown (resp. $n$-crown) for some $n\ge 2$. We say that a poset $X$ {\it has a weak crown (resp. crown)} if there is a subset $Y\subseteq X$ which is a weak crown (resp. crown) under the induced partial order.
	\end{definition}
	Posets without crowns are known to satisfy some ``good'' properties~\cite{Rival76,Draxler94,DN2016}.
	
	\begin{lemma}\label{admissibility-on-crowns}
		Let $\0\in\M(X)$. Then $\0\in\AM(X)$ if and only if \cref{s^+-s^-=t^+-t^-} holds
		for any $z\in X$ and any weak crown $\G:u_0,\dots,u_m=u_0$ in $X$.
	\end{lemma}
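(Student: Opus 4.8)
The plan is to reduce the statement about arbitrary closed (semi)walks to the statement about weak crowns. By \cref{admissible-on-semiwalk}, it suffices to show that if \cref{s^+-s^-=t^+-t^-} holds for every $z\in X$ and every weak crown $\G$ in $X$, then it holds for every $z\in X$ and every closed semiwalk $\G: u_0,\dots,u_m=u_0$ with $m\ge 2$. The case $m=2$ is precisely a weak $2$-crown (or its degeneration), so the real content is the induction on $m\ge 3$. First I would dispose of closed semiwalks that are not ``reduced'': if $u_{i-1}$ and $u_{i+1}$ are both greater than $u_i$ (a local minimum) or both smaller than $u_i$ (a local maximum) with $u_{i-1}=u_{i+1}$, one can delete the back-and-forth excursion $u_{i-1},u_i,u_{i+1}$; using the same bookkeeping as in \cref{from-G-to-G'-removement} (this is the $l=1$ case applied twice, or a direct check) the four quantities $s^\pm_{\0,\G}(z)$ and $t^\pm_{\0,\G}(z)$ change in a way that preserves the validity of \cref{s^+-s^-=t^+-t^-}. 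Thus it is enough to treat semiwalks in which consecutive ``turns'' genuinely alternate and no immediate backtracking occurs.

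Next, the key combinatorial observation: a reduced closed semiwalk of length $m\ge 3$ in which all comparabilities have $l(\lf\cdot,\cdot\rf)=1$ and which alternates up/down traces out exactly a weak crown (possibly with repeated vertices, i.e.\ a ``degenerate'' weak crown), so in the alternating case we are already done. The issue is a closed semiwalk of the form $u_0<u_1<u_2$, i.e.\ with a monotone stretch of length $\ge 2$. Here I invoke \cref{from-G-to-G'-removement} directly: given a maximal monotone run $u_k<u_{k+1}<\dots<u_{k+l}$ (or the reverse) with $l\ge 2$ inside $\G$, replace it by the single step $u_k,u_{k+l}$ to obtain $\G'$; \cref{from-G-to-G'-removement} guarantees that $s^+_{\0,\G}(z)-t^+_{\0,\G}(z)$ and $s^-_{\0,\G}(z)-t^-_{\0,\G}(z)$ are unchanged, hence \cref{s^+-s^-=t^+-t^-} holds for $\G$ iff it holds for $\G'$. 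Iterating, every monotone run of length $\ge 2$ collapses to a single step, and after also removing backtracking we are left with a reduced alternating closed semiwalk, that is, a weak crown; by hypothesis \cref{s^+-s^-=t^+-t^-} holds there, and pulling back through the two reduction steps it holds for the original $\G$.

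One subtlety I expect to be the main obstacle is making sure the reduction terminates at an honest weak $n$-crown with $n\ge 2$ rather than collapsing to something of length $0$ or $1$: one must check that after collapsing monotone runs and backtracking, at least two vertices (in the crown sense: two minima and two maxima, or the degenerate count tolerated by the definition) survive, so that \cref{s^+-s^-=t^+-t^-} can legitimately be applied. This is handled exactly as in the proof of \cref{admissible-on-semiwalk}: a monotone run $u_k<\dots<u_{k+l}$ that would be the whole semiwalk forces $u_0=u_m$ to fail unless $l=0$, so a genuine closed semiwalk always retains enough structure, and the $m=2$ base case is covered by \cite[Lemma~5.13]{FKS} as already noted in \cref{admissible-on-semiwalk}. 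The converse direction is trivial, since a weak crown is in particular a closed semiwalk. I would write the argument as: ``The `only if' part is immediate from the definition of $\AM(X)$, since every weak crown is a closed walk. For the `if' part, by \cref{admissible-on-semiwalk} it is enough to verify \cref{s^+-s^-=t^+-t^-} for all closed semiwalks; applying \cref{from-G-to-G'-removement} to collapse each maximal monotone run and removing backtracking reduces any closed semiwalk to a weak crown without affecting the validity of \cref{s^+-s^-=t^+-t^-}, and the latter holds for weak crowns by hypothesis.''
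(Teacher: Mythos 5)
Your proof is correct and follows essentially the same route as the paper's: reduce to closed semiwalks via \cref{admissible-on-semiwalk}, collapse maximal monotone runs via \cref{from-G-to-G'-removement}, and dispose of the residual degenerate case $x<y>x$ by the argument of \cite[Lemma~5.13]{FKS}. Your extra backtracking-removal step is a harmless refinement of the paper's tacit treatment of degenerate alternating semiwalks, and the converse direction is handled identically.
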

	\begin{proof}
		The ``only if'' part is obvious. For the ``if'' part take any closed semiwalk $\G:u_0,u_1,\dots,u_m=u_0$ and $z\in X$. Let $0\le k<k+l\le m$ such that $u_k<u_{k+1}<\dots<u_{k+l}$. We define $\G':u_0,\dots,u_k,u_{k+l},\dots,u_m=u_0$. By \cref{admissible-on-semiwalk} equality \cref{s^+-s^-=t^+-t^-} holds for $\G$ if and only if it holds for $\G'$. The same is true for any $\G'$ obtained from $\G$ by removing intermediate terms in a decreasing sequence of consecutive vertices. Thus, doing so for all maximal sequences in $\G$ we finally get $\G'$ whose vertices form either a sequence $x<y>x$, or a weak crown. However, the case $\G':x<y>x$ can be ignored, because \cref{s^+-s^-=t^+-t^-} always holds for such $\G'$ as shown in the proof of \cite[Lemma~5.13]{FKS}.
	\end{proof}
	
	\subsection{The crownless case}
	Let now $l(X)=1$. Observe that $\M(X)=S(B)$. Moreover, any $C\in\C(X)$ is linked only to itself, so $|\C(X)/{\sim}|=|\C(X)|$ and \cref{supp(C)-and-supp(0(C))} becomes useless. 
	\begin{definition}
		We say that $\0\in\M(X)$ {\it is separating} if there exists a pair of non-disjoint $C,D\in\C(X)$ such that $\0(C)$ and $\0(D)$ are disjoint.
	\end{definition}
	
	\begin{remark}
		Any separating $\0$ is not proper.
	\end{remark}
	
	\begin{lemma}\label{exist-disjoint-C-and-D}
		Let $l(X)=1$. If $|\Min(X)|>1$ and $|\Max(X)|>1$, then there are disjoint $C,D\in\C(X)$.
	\end{lemma}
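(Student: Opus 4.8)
The plan is to use the fact that when $l(X)=1$ the maximal chains are especially simple. First I would record the structural reduction: since $X$ is connected with $|X|>1$, we have $\Min(X)\cap\Max(X)=\emptyset$, so $X=\Min(X)\sqcup\Max(X)$, and moreover no element of $X$ is isolated. Hence every $C\in\C(X)$ has the form $\{x,y\}$ with $x\in\Min(X)$, $y\in\Max(X)$ and $x<y$ (such a two-element chain cannot be extended, as there is no third length available, so it is automatically maximal), and two maximal chains are disjoint precisely when they share neither their minimum nor their maximum. Thus the statement reduces to finding two comparabilities $x<y$ and $x'<y'$ with $x\ne x'$ in $\Min(X)$ and $y\ne y'$ in $\Max(X)$.

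Next I would fix an arbitrary $x_1\in\Min(X)$ and, using connectedness and $|X|>1$, pick $y_1\in\Max(X)$ with $x_1<y_1$. Then I split into two cases. \emph{Case 1:} there exist $x_2\in\Min(X)\setminus\{x_1\}$ and $y_2\in\Max(X)\setminus\{y_1\}$ with $x_2<y_2$; then $\{x_1,y_1\}$ and $\{x_2,y_2\}$ are the required disjoint elements of $\C(X)$. \emph{Case 2:} no such pair exists. Here I use $|\Min(X)|>1$ and $|\Max(X)|>1$ to choose $x_2\in\Min(X)\setminus\{x_1\}$ and $y_2\in\Max(X)\setminus\{y_1\}$; since $x_2$ is not isolated it lies below some element of $\Max(X)$, which by the Case 2 hypothesis must be $y_1$, so $x_2<y_1$, and symmetrically $x_1<y_2$. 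Then $\{x_2,y_1\}$ and $\{x_1,y_2\}$ are disjoint elements of $\C(X)$. In either case we are done.

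I do not expect a genuine obstacle here; the only points requiring care are the initial reduction — where connectedness together with $|X|>1$ is exactly what is needed to rule out isolated points and to guarantee $\Min(X)\cap\Max(X)=\emptyset$, so that every maximal chain has precisely two elements — and the short argument in Case 2 that the element found above $x_2$ is forced to equal $y_1$. Both are immediate from the hypotheses, so the proof is entirely elementary.
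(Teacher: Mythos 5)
Your proof is correct and follows essentially the same route as the paper: fix one comparability $x_1<y_1$, then either find a second comparability avoiding both of its endpoints, or, in the degenerate case, observe that $x_2<y_1$ and $x_1<y_2$ must hold and take the two ``crossed'' chains $\{x_2,y_1\}$ and $\{x_1,y_2\}$. Your preliminary reduction (that $l(X)=1$, connectedness and $|X|>1$ force every maximal chain to be a two-element set with one minimal and one maximal element) is also the implicit setup in the paper's argument.
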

	\begin{proof}
		
		
		
		Choose arbitrary $x<y$ in $X$. Obviously, $x\in\Min(X)$ and $y\in\Max(X)$. Let $U=\{u\in \Min(X)\mid u\not\le y\}$ and $V=\{v\in \Max(X)\mid x\not\le v\}$. If $U\ne\emptyset$, then take $u\in U$. Clearly, $u\ne x$. Since $X$ is connected, there exists $v>u$, and $v\ne y$ by the definition of $U$. Then $C:x<y$ and $D:u<v$ are disjoint. The case $V\ne\emptyset$ is similar. Suppose now that $U=V=\emptyset$. This means that $x\le v$ for any $v\in\Max(X)$ and $y\ge u$ for any $u\in\Min(X)$. Choose $u\in\Min(X)\setminus\{x\}$ and $v\in\Max(X)\setminus\{y\}$. Then $C_1:x<v$ and $D_1:u<y$ are disjoint.
	\end{proof}
	
	\begin{proposition}\label{M(X)=P(X)-when-Min(X)=1-or-Max(X)=1}
		Let $l(X)=1$. Then $\M(X)=\cP(X)$ if and only if $|\Min(X)|=1$ or $|\Max(X)|=1$.
	\end{proposition}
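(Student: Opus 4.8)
The plan is to reduce the statement and then handle the two implications. Since $l(X)=1$ we have $\M(X)=S(B)$, while $\cP(X)\sst\M(X)$ always; hence ``$\M(X)=\cP(X)$'' means exactly ``every bijection $\0\colon B\to B$ is proper'', and we must show this holds precisely when $|\Min(X)|=1$ or $|\Max(X)|=1$.

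For the ``if'' direction, assume $\Min(X)=\{x_0\}$ (the case $|\Max(X)|=1$ being entirely analogous). Connectivity together with $l(X)=1$ forces $X$ to be a ``fan'': the only path from $x_0$ to a given $y\in\Max(X)$ is $x_0,y$, since a path cannot revisit the unique minimal element; thus $x_0<y$ for every $y\in\Max(X)$ and these are the only strict comparabilities in $X$. Consequently $B=\{e_{x_0y}\colon y\in\Max(X)\}$, and any $\0\in S(B)$ is determined by a permutation $\pi$ of $\Max(X)$ via $\0(e_{x_0y})=e_{x_0\pi(y)}$. The bijection $\lb\colon X\to X$ fixing $x_0$ and acting as $\pi$ on $\Max(X)$ preserves the relations $x_0<y$ (the only ones present), so $\lb\in\Aut(X)$ and $\0=\hat\lb|_B$; therefore $\0\in\cP(X)$, giving $\M(X)=\cP(X)$.

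For the ``only if'' direction I would argue the contrapositive: assuming $|\Min(X)|>1$ and $|\Max(X)|>1$, I will construct a bijection of $B$ that is separating, hence not proper (by the remark preceding \cref{exist-disjoint-C-and-D}). By \cref{exist-disjoint-C-and-D} there is a pair of disjoint maximal chains $C,D\in\C(X)$. On the other hand, using that $X$ is connected with $l(X)=1$ and has at least two minimal elements, a path joining two of them must contain a maximal element $w$ lying above two distinct minimal elements $x'$ and $x''$; then $C_0\colon x'<w$ and $D_0\colon x''<w$ are distinct maximal chains with $C_0\cap D_0=\{w\}\neq\emptyset$. Now define $\0\in S(B)=\M(X)$ by sending the edge of $C_0$ to the edge of $C$, the edge of $D_0$ to the edge of $D$, and extending arbitrarily to a bijection between the two remaining (equinumerous) subsets of $B$; then $\0(C_0)=C$ and $\0(D_0)=D$, so $\0$ is separating, completing the proof.

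The routine points are that $\Min(X)=\{x_0\}$ forces the fan shape, that a permutation of $\Max(X)$ fixing $x_0$ is a poset automorphism, and that the partial map defining $\0$ extends to a bijection of $B$ (which only needs both subsets of $B$ to have $|B|-2$ elements). The single point requiring a genuine, if mild, argument --- the one I regard as the main obstacle --- is the existence, in the contrapositive direction, of a pair of \emph{distinct non-disjoint} maximal chains; this comes from connectivity and the $l(X)=1$ alternation, along the same lines as the proof of \cref{exist-disjoint-C-and-D}.
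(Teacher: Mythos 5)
Your proposal is correct and follows essentially the same route as the paper: in the "if" direction you identify each $\0\in S(B)$ with a permutation of $\Max(X)$ that extends to an automorphism fixing the unique minimal element, and in the "only if" direction you argue by contrapositive, combining \cref{exist-disjoint-C-and-D} with a path between two minimal elements (whose first three vertices give two distinct maximal chains meeting at a common maximal element) to build a separating, hence non-proper, bijection. The only cosmetic difference is that the paper writes the separating bijection as an explicit transposition of edges rather than "extend arbitrarily," but the content is identical.
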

	\begin{proof}
		If $|\Min(X)|=1$, say $\Min(X)=\{x\}$, then any $\0\in\M(X)$ can be identified with a bijection $\lb$ of $\Max(X)$ such that $\0(e_{xy})=e_{x\lb(y)}$ for all $y>x$. But $\lb$ extends to an automorphism of $X$ by means of $\lb(x)=x$, so that $\0(e_{xy})=e_{\lb(x)\lb(y)}$. A symmetric argument works in the case $|\Max(X)|=1$.
		
		Suppose now that $|\Min(X)|>1$ and $|\Max(X)|>1$. By \cref{exist-disjoint-C-and-D} there are disjoint $C:x<y$ and $D:u<v$. Choose a path $x=x_0,x_1,\dots,x_m=u$. Since $x,u\in\Min(X)$, then $m\ge 2$ and $x_0<x_1>x_2$. We define $\0(e_{x_0x_1})=e_{xy}$, $\0(e_{xy})=e_{x_0x_1}$, $\0(e_{x_2x_1})=e_{uv}$, $\0(e_{uv})=e_{x_2x_1}$ and $\0(e_{ab})=e_{ab}$ for any other $e_{ab}\in B$. Clearly, $\0\in S(B)=\M(X)$ and it is separating, in particular, not proper.
	\end{proof}
	
	If $X$ does not contain a weak crown, then $\AM(X)=\M(X)$ by \cref{admissibility-on-crowns}. Hence, we obtain the following.
	\begin{corollary}\label{P(X)=AM(X)-for-X-without-crown}
		Let $l(X)=1$ and assume that $X$ does not contain a weak crown. Then $\AM(X)=\cP(X)$ if and only if $|\Min(X)|=1$ or $|\Max(X)|=1$.
	\end{corollary}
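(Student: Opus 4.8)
The plan is to derive this immediately from the two preceding results, \cref{admissibility-on-crowns} and \cref{M(X)=P(X)-when-Min(X)=1-or-Max(X)=1}, once we observe that the crownless hypothesis trivializes the admissibility condition.

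First I would invoke \cref{admissibility-on-crowns}: for $\0\in\M(X)$, membership in $\AM(X)$ is equivalent to the equality \cref{s^+-s^-=t^+-t^-} holding for every $z\in X$ and every weak crown $\G:u_0,\dots,u_m=u_0$ in $X$. Since by assumption $X$ contains no weak crown, there are simply no such $\G$, so the condition is vacuously satisfied by every $\0\in\M(X)$. Hence $\AM(X)=\M(X)$ under the stated hypothesis.

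It then remains to rewrite the desired equivalence with $\M(X)$ in place of $\AM(X)$: we must show $\M(X)=\cP(X)$ if and only if $|\Min(X)|=1$ or $|\Max(X)|=1$. But this is exactly \cref{M(X)=P(X)-when-Min(X)=1-or-Max(X)=1}, which applies since $l(X)=1$. Combining the two observations gives $\AM(X)=\cP(X)$ if and only if $|\Min(X)|=1$ or $|\Max(X)|=1$, as claimed.

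There is no genuine obstacle here; all the work has already been done in \cref{M(X)=P(X)-when-Min(X)=1-or-Max(X)=1} (where the separating bijection built along a path $x_0<x_1>x_2$ between two distinct minimal elements does the heavy lifting) and in \cref{admissibility-on-crowns}. The only point to be careful about is to state clearly that ``no weak crown in $X$'' forces $\AM(X)=\M(X)$, rather than merely $\cP(X)\subseteq\AM(X)\subseteq\M(X)$, so that the equivalence transfers verbatim.
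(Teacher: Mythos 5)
Your proposal is correct and is exactly the paper's argument: the crownless hypothesis makes the admissibility condition of \cref{admissibility-on-crowns} vacuous, so $\AM(X)=\M(X)$, and the equivalence then follows verbatim from \cref{M(X)=P(X)-when-Min(X)=1-or-Max(X)=1}. Nothing further is needed.
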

	
	\subsection{The crown case}
	We will now consider two classes of posets of length one which have crowns. We begin with the case of $X$ being a crown and are going to calculate the groups $\cP(X)$ and $\AM(X)$ explicitly. Thus, in this subsection $X=\Cr_n=\{x_1,\dots,x_n,y_1,\dots,y_n\}$.
	
	\begin{definition}
		The chains $x_i<y_i$, $1\le i\le n$, will be called {\it odd}, and $x_{i+1}<y_i$, $1\le i\le n-1$, and $x_1<y_{n}$ will be called {\it even}. Thus, each element of $\Cr_n$ belongs to exactly one odd chain and to exactly one even chain.
	\end{definition}
	
	\begin{lemma}\label{0(C)-and-0(D)-have-opposite-parities}
		Let $\0\in\M(\Cr_n)$. Then $\0\in\AM(\Cr_n)$ if and only if for any pair of distinct non-disjoint chains $C,D\in\C(\Cr_n)$ the images $\0(C)$ and $\0(D)$ have opposite parities.
	\end{lemma}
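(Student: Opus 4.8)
The plan is to use \cref{admissibility-on-crowns}, which reduces the verification of admissibility to checking \cref{s^+-s^-=t^+-t^-} on weak crowns only. In the poset $\Cr_n$ every weak crown is itself an $m$-crown for some $m\le n$, and such a crown, viewed as a closed semiwalk, is exactly an alternating sequence of odd and even chains $C_1,C_2,\dots,C_{2m}=C_1$ where consecutive chains share an element. So the combinatorial content to extract is: for $\0\in\M(\Cr_n)$, the condition ``$\0(C)$ and $\0(D)$ have opposite parities for any two distinct non-disjoint $C,D$'' is equivalent to \cref{s^+-s^-=t^+-t^-} holding on all such alternating closed semiwalks.

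\textbf{The ``if'' direction.}
Assume that $\0$ always reverses the ``adjacency parity'': whenever $C,D$ are distinct and non-disjoint, $\0(C)$ and $\0(D)$ are one odd and one even. Let $\G:u_0,\dots,u_m=u_0$ be a weak crown in $\Cr_n$; write it as the cyclic alternation of chains $C_1,\dots,C_{2m}$ as above, with $C_j,C_{j+1}$ sharing the vertex $u_j$. Fix $z\in\Cr_n$. I would compute $s^\pm_{\0\m,\G}(z)$ and $t^\pm_{\0\m,\G}(z)$ by bookkeeping which of the chains $\0(C_j)$ pass through $z$ as their bottom (contributing to $s$) versus their top (contributing to $t$), and with which sign according to whether the $j$-th step of $\G$ is ascending or descending. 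The key point is that along $\G$ the chains $C_1,\dots,C_{2m}$ alternate strictly in parity (odd, even, odd, even, $\dots$), and the hypothesis guarantees $\0(C_1),\dots,\0(C_{2m})$ also alternate strictly in parity; moreover in a crown each vertex is the bottom of exactly one of its two chains and the top of the other. Tracking these incidences around the cycle, the four counts telescope and \cref{s^+-s^-=t^+-t^-} comes out as an identity. (Here I am free to replace $\0$ by $\0\m$ throughout, since \cref{admissibility-on-crowns} is applied to $\0$ but the defining identity involves $\0\m$-type counts in the earlier lemmas; I would follow the exact convention used in \cref{x_i=x_j-for-admissible-0} to keep signs consistent.)

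\textbf{The ``only if'' direction.}
Conversely, suppose $\0\in\AM(\Cr_n)$ but there exist distinct non-disjoint $C,D$ with $\0(C),\0(D)$ of the \emph{same} parity. Since $C,D$ are non-disjoint chains of length one in a crown, one is odd and the other even, so they share exactly one element, and together with their ``complementary'' chains they sit inside a small crown; the minimal closed semiwalk realizing the failure is $\G: x<y>x'$ extended appropriately, or more robustly a $2$-crown $C,E,D,F=C$. Evaluating \cref{s^+-s^-=t^+-t^-} at the shared vertex of $\0(C)$ and $\0(D)$ — which, because $\0(C)$ and $\0(D)$ have the same parity, is either a common minimum or a common maximum of both — produces an imbalance of exactly $1$ between the $s$-side and the $t$-side, contradicting admissibility. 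This is essentially the mechanism already used in the proof of \cref{x_i=x_j-for-admissible-0}, applied to the semiwalk built from $C,D$ and their neighbors.

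\textbf{Main obstacle.}
The routine but delicate part is the sign/incidence bookkeeping in the ``if'' direction: around the cyclic alternation one must correctly pair each ascending step of $\G$ with the chain sitting ``above'' the pivot vertex and each descending step with the chain ``below'', then see that the parity alternation of the $\0(C_j)$ forces the telescoping. I expect that writing $\G$ explicitly as an $m$-crown and indexing the chains $C_j$ so that $C_j$ is odd for $j$ odd will make the cancellation transparent; the case $m=2$ (and the degenerate $x<y>x$, which is handled as in \cite[Lemma~5.13]{FKS}) should be isolated first as the base case, and the general alternating crown treated by the same local analysis at each pivot.
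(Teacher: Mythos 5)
Your overall strategy is the paper's: reduce admissibility to the single closed walk given by the full crown (you do this via \cref{admissibility-on-crowns}, the paper via cyclic-shift invariance of \cref{s^+-s^-=t^+-t^-}; both are fine, since the only weak crown in $\Cr_n$ is the full $n$-crown, not a proper $m$-crown with $m<n$ as your setup suggests), and then perform a local count at each vertex. However, the ``only if'' direction as written contains a genuine error. You propose to evaluate \cref{s^+-s^-=t^+-t^-} ``at the shared vertex of $\0(C)$ and $\0(D)$,'' claiming that because $\0(C)$ and $\0(D)$ have the same parity this vertex is a common minimum or maximum of both. But two distinct chains of the \emph{same} parity in $\Cr_n$ are \emph{disjoint} (distinct odd chains $x_i<y_i$ and $x_j<y_j$ share nothing), so this vertex does not exist. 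The correct evaluation point is the shared vertex $z$ of $C$ and $D$ themselves, with $\G$ the full crown: if $z\in\Min(\Cr_n)$ then $t^\pm_{\0,\G}(z)=0$, the only chains above $z$ are $C$ and $D$, each of $\0(C)\ne\0(D)$ occurs exactly once as an edge of $\G$, so $s^+_{\0,\G}(z)+s^-_{\0,\G}(z)=2$ and same parity forces $\{s^+,s^-\}=\{2,0\}$ --- an imbalance of $2$, not $1$ as you state. Your fallback witness, a $2$-crown ``$C,E,D,F$,'' also does not exist in $\Cr_n$ for $n>2$: the two maximal elements above $z$ have $z$ as their unique common lower bound.

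The ``if'' direction is a plan rather than a proof, and the plan overcomplicates matters: there is no telescoping around the cycle. For fixed $z$ the four functions are determined entirely by the (at most two) chains having $z$ as an endpoint; since every chain of $\Cr_n$ appears exactly once as an edge of the full crown $\G$, increasing iff odd, the hypothesis that the two images have opposite parities gives immediately $s^+=s^-=1$, $t^\pm=0$ (or the symmetric statement for $z$ maximal), which is \cref{s^+-s^-=t^+-t^-}. Your side remark about freely passing to $\0\m$ is harmless but unnecessary and would itself require a short argument that the parity-alternation condition is inherited by $\0\m$; the definition of $s^\pm_{\0,\G}$, $t^\pm_{\0,\G}$ already matches $\0$ of chains at $z$ against edges of $\G$, so no inversion is needed here.
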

	\begin{proof}
		Observe that \cref{s^+-s^-=t^+-t^-} is invariant under cyclic shifts of $\G$ (any such shift does not change the functions $s^{\pm}_{\0,\G}$ and $t^{\pm}_{\0,\G}$). Thus, for admissibility it is enough to consider $\G:x_1<y_1>x_2<\dots <y_{n}>x_1$, since any other cycle in $\Cr_n$ is a cyclic shift of $\G$. 
		
		The {\it ``only if''} case. Let $\0\in\AM(\Cr_n)$ and $C,D\in\C(\Cr_n)$, such that $C\cap D=\{z\}$, where $z\in\Min(\Cr_n)\sqcup\Max(\Cr_n)$. Suppose that $z\in \Min(\Cr_n)$. Then there are only two elements $w,w'\in\Max(\Cr_n)$ such that $z<w,w'$. Thus, $t_{\0,\G}^{\pm}(z)=0$ and $\0$ is admissible if and only if $s_{\0,\G}^{+}(z)=s_{\0,\G}^{-}(z)=1$, which only occur if $\0(C)$ and $\0(D)$ have opposite parities. The case when $z\in\Max(\Cr_n)$ is similar.
		
		The {\it ``if''} case. Let $\0\in\M(\Cr_n)$ and $z\in \Cr_n$ be arbitrary. Again, we consider the case $z\in\Min(\Cr_n)$, so that $t_{\0,\G}^{\pm}(z)=0$. Choose $w,w'\in\Max(\Cr_n)$ with $z<w,w'$ and put $C:z<w$ and $D:z<w'$. Since $\0(C)$ and $\0(D)$ have opposite parities, then $s_{\0,\G}^{\pm}(z)=1$ and \cref{s^+-s^-=t^+-t^-} is satisfied. Similarly, one handles the case $z\in\Max(\Cr_n)$.
	\end{proof}
	
	
	\begin{proposition}\label{AM(C_n)-as-semidirect-product}
		The group $\AM(\Cr_n)$ is isomorphic to $(S_{n}\times S_{n})\rtimes \mathds Z_2$.
	\end{proposition}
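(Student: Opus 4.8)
The plan is to exhibit an explicit isomorphism by pinning down what an admissible $\0\in\M(\Cr_n)$ can do to the maximal chains. Recall from \cref{0(C)-and-0(D)-have-opposite-parities} that $\0$ is admissible precisely when it sends non-disjoint pairs of chains to pairs of opposite parity. First I would show that this forces $\0$ either to preserve parity (every odd chain goes to an odd chain and every even chain to an even chain) or to reverse it (every odd chain goes to an even chain and vice versa): indeed, fix any $z\in\Min(\Cr_n)$; the two maximal chains through $z$ are one odd and one even, and their images must have opposite parities, so exactly one of them is carried to an odd chain. Propagating this around the cycle $x_1<y_1>x_2<\dots>x_1$, the parity behaviour is constant, giving a group homomorphism $\pi\colon\AM(\Cr_n)\to\mathds Z_2$.

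Next I would identify $\Ker\pi$ with $S_n\times S_n$. A parity-preserving $\0$ permutes the $n$ odd chains among themselves and the $n$ even chains among themselves; since $l(\Cr_n)=1$, each chain $e_{uv}$ is a single basis element, so $\0$ is completely determined by the induced permutation of odd chains together with the induced permutation of even chains — provided these two permutations can be chosen independently. The key point here is a \emph{compatibility check}: a pair $(\rho,\eta)\in S_n\times S_n$ (where $\rho$ permutes odd chains, $\eta$ even chains) determines a well-defined bijection of $B$ iff the incidence structure is respected, but in a crown every element lies on exactly one odd and one exactly one even chain and these meet in exactly one point, so any pair $(\rho,\eta)$ is realizable; moreover the resulting $\0$ is automatically admissible by \cref{0(C)-and-0(D)-have-opposite-parities} because parity is preserved. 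Hence $\Ker\pi\cong S_n\times S_n$, and I would check this identification is a group isomorphism (composition of bijections of $B$ corresponds to composition of the pairs coordinatewise).

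To finish, I would show $\pi$ is surjective by producing one parity-reversing admissible bijection $\sg_0$. A natural choice is the map induced by an anti-automorphism of the poset $\Cr_n$: $\Cr_n$ is self-dual (the assignment $x_i\mapsto y_{i}$, $y_i\mapsto x_{i+1}$, say, with indices mod $n$, reverses order), so $\hat{\sg_0}$ restricted to $B$ lies in $\cP(\Cr_n)\subseteq\AM(\Cr_n)$ and swaps the two parity types. Then $\AM(\Cr_n)$ is an extension of $\mathds Z_2$ by $S_n\times S_n$, and it splits via $\sg_0$ (whose square is parity-preserving; if $\sg_0^2\ne\id$ one adjusts by an element of the kernel, or simply takes $\sg_0$ of order $2$ from the outset — the duality above can be chosen to be an involution when $n$ is handled correctly, or one takes the order-two poset anti-automorphism $x_i\leftrightarrow y_{?}$ realizing $\mathrm{Cr}_n\cong\mathrm{Cr}_n^{\mathrm{op}}$). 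Thus $\AM(\Cr_n)\cong(S_n\times S_n)\rtimes\mathds Z_2$, with $\mathds Z_2$ acting by swapping the two $S_n$ factors (this action comes from conjugating the pair $(\rho,\eta)$ by $\sg_0$, which interchanges the roles of odd and even chains). The main obstacle I anticipate is the compatibility/realizability verification in the middle step — making fully rigorous that \emph{every} pair $(\rho,\eta)$ yields a genuine element of $\M(\Cr_n)$ and conversely that a parity-preserving element is recovered from its pair — together with getting the semidirect-product action exactly right (swap of factors) and choosing $\sg_0$ so that the splitting is clean.
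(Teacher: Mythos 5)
Your proposal is correct and follows essentially the same route as the paper: use \cref{0(C)-and-0(D)-have-opposite-parities} to show every $\0\in\AM(\Cr_n)$ either preserves or reverses the odd/even partition of $\C(\Cr_n)$, identify the parity-preserving subgroup with $S_n\times S_n$ (realizability is automatic since $\M(\Cr_n)=S(B)$ and each maximal chain is a single element of $B$), and split off a $\mathds Z_2$ acting by swapping the factors. The one slip is in your choice of splitting element: the anti-automorphism $x_i\mapsto y_i$, $y_i\mapsto x_{i+1}$ has order $2n$, and the involutive anti-automorphisms of $\Cr_n$ (the ``edge reflections'' of the $2n$-gon) actually \emph{preserve} chain parity, so the fix ``take an order-two anti-automorphism'' does not work in general; the paper instead writes down an explicit (non-proper) involution of $B$ swapping $e_{x_iy_i}\leftrightarrow e_{x_{i+1}y_i}$, and one could equally use the involutive \emph{automorphism} given by the reflection fixing $x_1$, which also reverses parity. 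Your fallback of adjusting $\sg_0$ by a kernel element would also go through, but it requires an extra (solvable) computation that the explicit involution makes unnecessary.
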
	
	\begin{proof}
		Denote by $\mathcal{O}$ and $\mathcal{E}$ the subsets of $\C(\Cr_n)$ formed by the odd and even chains, respectively, and let $\mathcal{G}=\{\0\in \M(\Cr_n) : \0(\mathcal{O})=\mathcal{O} \text{ or } \0(\mathcal{O})=\mathcal{E}\}$. We will first prove that $\AM(\Cr_n)=\mathcal{G}$. For any $\0\in \AM(\Cr_n)$, if $\0(e_{x_1y_1})\in \mathcal{O}$, then $\0(e_{x_2y_1})\in \mathcal{E}$ by \cref{0(C)-and-0(D)-have-opposite-parities}. It follows that $\0(e_{x_2y_2})\in \mathcal{O}$ by the same reason. Applying this argument consecutively to $e_{x_3y_2},e_{x_3y_3},\dots,e_{x_ky_k},e_{x_1y_k}$, we obtain $\0(\mathcal{O})=\mathcal{O}$. Similarly, if $\0(e_{x_1y_1})\in \mathcal{E}$, then $\0(\mathcal{O})=\mathcal{E}$. Thus, $\0\in \mathcal{G}$. On the other hand, let $\0\in \mathcal{G}$ and $C_1,C_2\in\C(\Cr_n)$, $C_1\neq C_2$, such that $C_1\cap C_2\neq\emptyset$. Then $C_1$ and $C_2$ have opposite parities, say, $C_1\in \mathcal{O}$ and $C_2\in \mathcal{E}$. If $\0(\mathcal{O})=\mathcal{O}$, then $\0(\mathcal{E})=\mathcal{E}$ due to the bijectivity of $\0$. Analogously, if $\0(\mathcal{O})=\mathcal{E}$, then $\0(\mathcal{E})=\mathcal{O}$. So, in either case, $\0(C_1)$ and $\0(C_2)$ have opposite parities. Therefore, $\0\in \AM(\Cr_n)$, by \cref{0(C)-and-0(D)-have-opposite-parities}.
		
		We now prove that $\mathcal{G}\cong (S_{n}\times S_{n})\rtimes \mathds Z_2$. Consider $\mathcal{H}=\{\0\in \M(\Cr_n) : \0(\mathcal{O})=\mathcal{O}\}$. Clearly, $\mathcal{H}$ is a (normal) subgroup of $\mathcal{G}$ of index $2$. Since $|\mathcal{O}|=|\mathcal{E}|=n$, we have $\mathcal{H}\cong S_{n}\times S_{n}$. Define $\0\in\M(\Cr_n)$ as follows: $\0(e_{x_iy_i})=e_{x_{i+1}y_i}$ and $\0(e_{x_{i+1}y_i})=e_{x_iy_i}$ for $1\le i\le n-1$, $\0(e_{x_{n}y_{n}})=e_{x_1y_{n}}$ and $\0(e_{x_1y_{n}})=e_{x_{n}y_{n}}$. By definition $\0\in\mathcal{G}\setminus\mathcal{H}$ and $\0$ has order $2$. Therefore, $\mathcal{G}=\mathcal{H}\cdot\langle\0\rangle\cong(S_{n}\times S_{n})\rtimes \mathds Z_2$.
	\end{proof}	
	
	
	
	
	\begin{proposition}\label{P(C_n)-as-semidirect-product}
		The group $\cP(\Cr_n)$ is isomorphic to $\mathds Z_{2n}\rtimes \mathds Z_2$.
	\end{proposition}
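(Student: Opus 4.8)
The plan is to compute $\cP(\Cr_n)$ through its realization as a group of poset (anti-)automorphisms. Since $|\Cr_n|=2n\ge 4>2$, \cref{P(X)-cong-Aut^pm(X)} gives $\cP(\Cr_n)\cong\Aut^\pm(\Cr_n)$, so it suffices to identify $\Aut^\pm(\Cr_n)$. The first observation is that the Hasse diagram of $\Cr_n$ is the cycle $C_{2n}$ with vertices listed in the cyclic order $x_1,y_1,x_2,y_2,\dots,x_n,y_n$ (and back to $x_1$): the covering relations $x_i<y_i$ and $x_{i+1}<y_i$, with $x_{n+1}:=x_1$, are precisely its $2n$ edges. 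Moreover $\Min(\Cr_n)=\{x_1,\dots,x_n\}$ and $\Max(\Cr_n)=\{y_1,\dots,y_n\}$ are exactly the two parts of the (unique) bipartition of $C_{2n}$.

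Next I would reduce poset (anti-)automorphisms to graph automorphisms. Because $l(\Cr_n)=1$, for $a,b\in\Cr_n$ one has $a<b$ if and only if $a\in\Min(\Cr_n)$, $b\in\Max(\Cr_n)$ and $\{a,b\}$ is an edge of $C_{2n}$; in particular $\Min(\Cr_n)$ and $\Max(\Cr_n)$ are antichains. From this it follows at once that a bijection $\lb$ of $\Cr_n$ is a poset automorphism if and only if it is a graph automorphism of $C_{2n}$ preserving each of the parts $\Min(\Cr_n),\Max(\Cr_n)$, and a poset anti-automorphism if and only if it is a graph automorphism of $C_{2n}$ interchanging these two parts. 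Since $C_{2n}$ is connected and bipartite, every graph automorphism either fixes both parts setwise or swaps them; hence $\Aut^\pm(\Cr_n)=\Aut(C_{2n})$ as subgroups of the symmetric group on $\Cr_n$, with $\Aut(\Cr_n)$ the parity-preserving half and $\Aut^-(\Cr_n)$ the parity-reversing one.

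It then remains to recall that $\Aut(C_{2n})$ is the dihedral group of order $4n$, i.e. $\mathds{Z}_{2n}\rtimes\mathds{Z}_2$ with $\mathds{Z}_2$ acting by inversion. Concretely, one takes the rotation $\rho$ of $C_{2n}$, which as a map on $\Cr_n$ is $\rho(x_i)=y_i$, $\rho(y_i)=x_{i+1}$ with indices read modulo $n$; it is an anti-automorphism of $\Cr_n$ of order $2n$, since $\rho^2$ shifts indices by $1$ and $\rho^{2k+1}$ interchanges the $x$'s with the $y$'s. And one takes the reflection $\mu$ given by $\mu(x_i)=x_{2-i}$, $\mu(y_i)=y_{1-i}$ (indices modulo $n$); it is an automorphism of order $2$ satisfying $\mu\rho\mu=\rho^{-1}$, and $\mu\notin\langle\rho\rangle$. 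A cardinality count, $|\Aut(C_{2n})|=4n=|\langle\rho,\mu\rangle|$, shows $\Aut^\pm(\Cr_n)=\langle\rho,\mu\rangle\cong\mathds{Z}_{2n}\rtimes\mathds{Z}_2$, and the proposition follows.

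The step I expect to require the most care is the equivalence between poset (anti-)automorphisms of $\Cr_n$ and graph automorphisms of $C_{2n}$ with prescribed behaviour on the bipartition: this is where $l(\Cr_n)=1$ is used essentially, so that all comparabilities are covers and the minimal and maximal elements form antichains, and it is also where one rules out any ``extra'' poset automorphisms not coming from the dihedral symmetries of the cycle. Everything else — checking that $\rho$ and $\mu$ have the stated orders and satisfy $\mu\rho\mu=\rho^{-1}$, and recalling $|\Aut(C_{2n})|=4n$ for $2n\ge 3$ — is routine.
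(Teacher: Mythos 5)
Your proof is correct and follows essentially the same route as the paper: both reduce to $\Aut^\pm(\Cr_n)$ via \cref{P(X)-cong-Aut^pm(X)} and then identify $\Aut^\pm(\Cr_n)$ with the dihedral group of order $4n$ by viewing the elements of $\Cr_n$ as the vertices of a $2n$-cycle, with automorphisms preserving and anti-automorphisms swapping the bipartition classes $\Min(\Cr_n)$ and $\Max(\Cr_n)$. The only cosmetic difference is that you package the identification through $\Aut(C_{2n})$ and a generators-and-relations check for $\rho,\mu$, whereas the paper enumerates the $4n$ symmetries directly by where they send two adjacent vertices.
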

	\begin{proof}
		In view of \cref{P(X)-cong-Aut^pm(X)} it suffices to prove that $\Aut^\pm(\Cr_n)\cong \mathds Z_{2n}\rtimes \mathds Z_2$. To this end, we will show that $\Aut^\pm(\Cr_n)\cong D_{2n}$, where $D_{2n}$ is the group  of symmetries of a regular $2n$-gon which is known to be isomorphic to $\mathds Z_{2n}\rtimes \mathds Z_2$. Denote $x_i$ by $u_{2i-1}$ and $y_i$ by $u_{2i}$, for all $i=1,\ldots,n$, identifying $u_j$ with the $j$-th vertex of a regular $2n$-gon, whose vertices are indexed consecutively according to the counterclockwise orientation. For the sake of simplicity, we shall consider the indices modulo $2n$ in the rest of the proof. 
		
		Given $\varphi\in\Aut^\pm(\Cr_n)$, set $i_{\varphi}$ to be the integer modulo $2n$ such that $\varphi(u_{2n})=u_{i_{\varphi}}$. Notice that if $i_{\varphi}$ is even then $\varphi\in\Aut(\Cr_n)$, otherwise $\varphi\in\Aut^-(\Cr_n)$. In any case, since the only elements of $\Cr_n$ comparable with $u_{2n}$, besides itself, are $u_{2n-1}$ and $u_1$, then $\varphi(u_1)=u_{i_{\varphi}\pm 1}$. If $\varphi(u_1)=u_{i_{\varphi}+1}$, then it can be easily shown inductively that $\varphi(u_j)=u_{i_{\varphi}+j}$ for any $j=1,\ldots,2n$. This corresponds to the counterclockwise rotation by an angle of $i_{\varphi}\pi/n$ in $D_{2n}$. If $\varphi(u_1)=u_{i_{\varphi}-1}$, again by an easy inductive argument, $\varphi(u_j)=u_{i_{\varphi}-j}$ for all $j=1,\ldots,2n$. If $i_{\varphi}$ is even, $\varphi$ corresponds to the reflection across the diagonal containing $u_j$ and $u_{j+n}$, where $2j=i_{\varphi}$. Otherwise $\varphi$ corresponds to the reflection across the line which contains the midpoints of the sides $u_ju_{j+1}$ and $u_{j+n}u_{j+n+1}$, where $i_{\varphi}=2j+1$. Since $i_{\varphi}$ can be any of the $2n$ indices of the vertices considered, all the $4n$ elements of $D_{2n}$ ($2n$ rotations and $2n$ reflections) can occur as elements of $\Aut^\pm(\Cr_n)$ and we obtain the claimed isomorphism.
	\end{proof}	
	
	
	\begin{corollary}\label{P(X)-and-AM(X)-for-X=2-crown}
		We have $\cP(\Cr_2)=\AM(\Cr_2)$ and $\cP(\Cr_n)\ne \AM(\Cr_n)$ for all $n>2$.
	\end{corollary}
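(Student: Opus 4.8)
The plan is to deduce the statement from \cref{AM(C_n)-as-semidirect-product,P(C_n)-as-semidirect-product} by a pure cardinality count, using the already-established inclusion $\cP(\Cr_n)\sst\AM(\Cr_n)$. First I would record that all the groups in sight are finite: since $\Cr_n$ is a finite poset, $B$ is a finite set, $\M(\Cr_n)=S(B)$ is finite, and hence so are its subgroups $\AM(\Cr_n)$ and $\cP(\Cr_n)$. By \cref{AM(C_n)-as-semidirect-product} we have $|\AM(\Cr_n)|=|(S_n\times S_n)\rtimes\mathds Z_2|=2(n!)^2$, and by \cref{P(C_n)-as-semidirect-product} we have $|\cP(\Cr_n)|=|\mathds Z_{2n}\rtimes\mathds Z_2|=4n$.

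For $n=2$ this gives $|\AM(\Cr_2)|=2\cdot(2!)^2=8=4\cdot 2=|\cP(\Cr_2)|$. Since $\cP(\Cr_2)$ is a subgroup of $\AM(\Cr_2)$ and both are finite of the same order, they coincide: $\cP(\Cr_2)=\AM(\Cr_2)$.

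For $n>2$ it suffices to verify the strict inequality $4n<2(n!)^2$, i.e.\ $2n<(n!)^2$. This is immediate for $n\ge 3$: then $n!\ge 6$ and $n!\ge n$, so $(n!)^2\ge 6\cdot n!\ge 6n>2n$. Hence $|\cP(\Cr_n)|<|\AM(\Cr_n)|$, and since $\cP(\Cr_n)\sst\AM(\Cr_n)$, this inclusion is proper, so $\cP(\Cr_n)\ne\AM(\Cr_n)$.

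There is essentially no substantive obstacle here; the only points requiring care are to invoke finiteness before concluding ``inclusion plus equal cardinality implies equality'' in the case $n=2$, and to make the elementary estimate $(n!)^2>2n$ airtight for all $n\ge 3$ (the one-line bound above, or alternatively an easy induction from the base case $n=3$, where $36>6$, does the job).
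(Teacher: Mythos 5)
Your proposal is correct and follows essentially the same route as the paper: both deduce the result by comparing $|\AM(\Cr_n)|=2(n!)^2$ with $|\cP(\Cr_n)|=4n$ from the two preceding propositions, using the inclusion $\cP(\Cr_n)\sst\AM(\Cr_n)$. Your write-up is merely more explicit about finiteness and the elementary inequality for $n\ge 3$.
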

	\begin{proof}
		Indeed, $|\AM(\Cr_2)|=|\cP(\Cr_2)|$ and $|\AM(\Cr_n)|=2(n!)^2>4n!>4n=|\cP(\Cr_2)|$ for $n>2$.
	\end{proof}
	
	\subsection{The case of the ordinal sum of two anti-chains}
	
	We will now proceed to the case of sets of length one which have as many crowns as possible.
	\begin{definition}
		Given positive integers $m$ and $n$, denote by $\K_{m,n}$ the poset $\{x_1,\dots,x_m,y_1,\dots,y_n\}$, where $x_i<y_j$ for all $1\le i\le m$ and $1\le j\le n$, and there is no other pair of distinct comparable elements.
	\end{definition}
	Observe that $\K_{m,n}$ is the ordinal sum~\cite{Stanley} of two anti-chains of cardinalities $m$ and $n$. The Hasse diagram of $\K_{m,n}$ is a {\it complete bipartite graph}~\cite{Bollobas}, so that $\Aut(\K_{m,n})\cong S_m\times S_n$. It is also clear that $\K_{m,n}$ is anti-isomorphic to $\K_{n,m}$, so we may assume that $m\le n$. The cases $m=1$ and $m=n=2$ (a $2$-crown) were considered in \cref{M(X)=P(X)-when-Min(X)=1-or-Max(X)=1,P(X)-and-AM(X)-for-X=2-crown}.

	\begin{proposition}\label{P(X)=AM(X)-for-X=K_mn}
		Let $2\le m\le n$. Then $\cP(\K_{m,n})=\AM(\K_{m,n})$.
	\end{proposition}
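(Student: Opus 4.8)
The plan is to establish the non-trivial inclusion $\AM(\K_{m,n})\sst\cP(\K_{m,n})$; the reverse inclusion holds in general. Fix $\0\in\AM(\K_{m,n})$. Since $l(\K_{m,n})=1$ we have $\M(\K_{m,n})=S(B)$, so $\0$ is simply a permutation of $B=\{e_{x_iy_j}\}$, which I regard as the cells of an $m\times n$ grid with $i$-th \emph{row} $R_i=\{e_{x_iy_j}:1\le j\le n\}$ (the maximal chains through $x_i$) and $j$-th \emph{column} $C_j=\{e_{x_iy_j}:1\le i\le m\}$ (the maximal chains through $y_j$). By \cref{propertheta}, proving $\0\in\cP(\K_{m,n})$ amounts to showing that $\0$ either sends every row to a row and every column to a column (then $\0=\hat\lb$ with $\lb\in\Aut(\K_{m,n})$), or, only when $m=n$, sends every row to a column and every column to a row (then $\0=\hat\lb$ with $\lb$ an anti-automorphism).

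First I would extract the admissibility constraints coming from weak $2$-crowns. Any two distinct minimal elements $x_a,x_b$ together with any two distinct maximal elements $y_c,y_d$ span a weak $2$-crown in $\K_{m,n}$; for the associated closed semiwalk $\G:x_a,y_c,x_b,y_d,x_a$, \cref{admissibility-on-crowns} says \cref{s^+-s^-=t^+-t^-} holds for every $z$. Taking $z=x_a$, which is minimal (so $t^\pm_{\0,\G}(x_a)=0$), a direct evaluation of $s^\pm_{\0,\G}(x_a)$ from the definitions turns the admissibility equality into $M^{(a)}_{ac}+M^{(a)}_{bd}=M^{(a)}_{bc}+M^{(a)}_{ad}$, where $M^{(a)}$ is the $m\times n$ matrix whose $(i,j)$-entry is $1$ if $e_{x_iy_j}\in\0(R_a)$ and $0$ otherwise; equivalently, any two rows of $M^{(a)}$ differ by a constant vector. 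As $M^{(a)}$ has entries in $\{0,1\}$ and exactly $n$ ones, a short case analysis on its rows gives the dichotomy: $\0(R_a)$ is either a genuine row, or a union of $n/m$ genuine columns (the latter forcing $m\mid n$, hence collapsing to a single column when $m=n$). Running the same argument at $z=y_c$ yields the dual dichotomy: $\0(C_c)$ is either a genuine column, or a union of $m/n$ genuine rows (forcing $m=n$, since $m\le n$).

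Then I would glue the two dichotomies using $|\0(R_a)\cap\0(C_c)|=|\0(\{e_{x_ay_c}\})|=1$ and the fact that $\{\0(R_a)\}_a$ and $\{\0(C_c)\}_c$ are partitions of $B$. Suppose first that every $\0(R_a)$ is a genuine row; these being $m$ disjoint rows, $\0(R_a)=R_{\pi(a)}$ for some $\pi\in S_m$, so each $\0(C_c)$ meets every $R_{\pi(a)}$ in exactly one cell, hence is a transversal of the rows, which by the column dichotomy forces it to be a genuine column (a union of rows is not a transversal); thus $\0(C_c)=C_{\rho(c)}$, and $\0(\{e_{x_ay_c}\})=R_{\pi(a)}\cap C_{\rho(c)}$ shows $\0=\hat\lb$ with $\lb\in\Aut(\K_{m,n})$. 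Otherwise some $\0(R_a)$ is a union of $n/m$ columns; if $n/m\ge 2$, intersecting it with a genuine image column $\0(C_c)$ (the only possibility when $m<n$) yields $0$ or $m\ge 2$ cells, a contradiction, so necessarily $n/m=1$, i.e.\ $m=n$ and $\0(R_a)$ is a single column. Then no other $\0(R_{a'})$ can be a row, since it would be disjoint from that column; hence all image rows are columns, and dually all image columns are rows, so reading off $\0(\{e_{x_ay_c}\})$ as before exhibits $\0=\hat\lb$ for the anti-automorphism $\lb$ with $\lb(x_a)=y_{\rho(a)}$, $\lb(y_c)=x_{\pi(c)}$. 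In all cases $\0\in\cP(\K_{m,n})$, so $\AM(\K_{m,n})=\cP(\K_{m,n})$.

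The main obstacle is the bookkeeping in the second paragraph: translating the combinatorial definitions of $s^\pm_{\0,\G}$ and $t^\pm_{\0,\G}$ for the four-step semiwalk $\G$ into the clean bilinear relation on $M^{(a)}$, and carefully tracking the ``union of columns'' branch, which is vacuous when $m<n$ and which, when $m=n$, produces exactly the extra anti-automorphisms responsible for the index-two enlargement of $\Aut(\K_{n,n})$ inside $\Aut^\pm(\K_{n,n})$. Once the two dichotomies are in hand, the gluing is routine.
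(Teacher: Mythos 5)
Your proof is correct and follows essentially the same route as the paper's: both extract from admissibility on the $4$-cycles $x_a<y_c>x_b<y_d>x_a$, evaluated at minimal and maximal elements, a structural dichotomy for the image of each pencil of chains through a fixed vertex (your ``rows of $M^{(a)}$ differ by a constant vector'' is the paper's ``the image of a pencil is a combinatorial rectangle with one side a singleton''), and then glue these dichotomies into an automorphism, or an anti-automorphism when $m=n$. The only cosmetic differences are that the paper runs the argument on the pencils through maximal elements and reduces the anti-automorphism case to the automorphism case by composing with the swap $x_i\leftrightarrow y_i$, whereas you treat rows and columns symmetrically and handle that case directly.
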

	\begin{proof}
		Let $\0\in\AM(\K_{m,n})$. Fix $j\in \{1,\ldots, n\}$ and write $\0(e_{x_iy_j})=e_{u_iv_i}$, $1\le i\le m$. Denote by $U_j$ and $V_j$ the sets of all $u_i$ and $v_k$, respectively. We first prove that for any pair of $u_i\in U_j$ and $v_k\in V_j$ there is $l$ such that $\0(e_{x_ly_j})=e_{u_iv_k}$. This is trivial if $u_i=u_k$ or $v_i=v_k$, so let $u_i\ne u_k$ and $v_i\ne v_k$. Consider the cycle $\G:u_i<v_i>u_k<v_k>u_i$. We have $s^{\pm}_{\0,\G}(y_j)=0$ and $t^+_{\0,\G}(y_j)=2$. Since $\0$ is admissible, we must have $t^-_{\0,\G}(y_j)=2$. But this means that $\0(e_{x_ly_j})=e_{u_iv_k}$ for some $1\le l\le m$ (and $\0(e_{x_py_j})=e_{u_kv_i}$ for some $1\le p\le m$), as desired. As a consequence, we obtain $|U_j|\cdot|V_j|=m$.
		
		We now prove that $|U_j|=1$ or $|V_j|=1$. Assume that $|U_j|\ge 2$ and $|V_j|\ge 2$. Since $|U_j|\cdot|V_j|=m$, we conclude that $|U_j|\le \frac m2$ and $|V_j|\le \frac m2\le\frac n2$. It follows that there exist $z<w$ such that $z\not\in U_j$ and $w\not\in V_j$. Consider the cycle $\G:u_1<v_1>z<w>u_1$. Clearly, $s^{\pm}_{\0,\G}(y_j)=0$, $t^+_{\0,\G}(y_j)=1$ and $t^-_{\0,\G}(y_j)=0$, a contradiction.
		
		\textit{Case 1.} $|V_j|=1$ for all $1\le j\le n$. Then there exists a bijection $\lb$ of $\{y_1,\dots,y_n\}$ such that $\{\0(e_{x_1y_j}),\dots,\0(e_{x_my_j})\}=\{e_{x_1\lb(y_j)},\dots,e_{x_m\lb(y_j)}\}$ for all $1\le j\le n$. We will prove that $\0(e_{x_iy_j})=e_{u_i\lb(y_j)}$ and $\0(e_{x_iy_k})=e_{z_i\lb(y_k)}$ imply $u_i=z_i$ for $j\ne k$. Suppose that $u_i\ne z_i$ and consider the cycle $\G:u_i<\lb(y_j)>z_i<\lb(y_k)>u_i$. We have $t^{\pm}_{\0,\G}(x_i)=0$, $s^+_{\0,\G}(x_i)=2$ and $s^-_{\0,\G}(x_i)=0$, a contradiction. Thus, there exists a bijection $\mu$ of $\{x_1,\dots,x_m\}$ such that $\0(e_{x_iy_j})=e_{\mu(x_i)\lb(y_j)}$ for all $1\le i\le m$ and $1\le j\le n$. But this means that $\0$ corresponds to the automorphism of $\K_{m,n}$ acting as $\lb$ on $\Max(\K_{m,n})$ and as $\mu$ on $\Min(\K_{m,n})$. So, $\0\in\cP(\K_{m,n})$.
		
		\textit{Case 2.} $|U_j|=1$ for some $1\le j\le n$. Then $|V_j|=m$. We will prove that this is possible only if $m=n$. Assume that $m<n$ and let $U_j=\{p_j\}$. Since $\0(e_{x_iy_j})=e_{p_jv_i}$ for all $1\le i\le m$ and $|V_j|<n$, there exist $z<w$ such that $z\ne p_j$ and $w\not\in V_j$. Taking the cycle $\G:p_j<v_1>z<w>p_j$, we obtain $s^{\pm}_{\0,\G}(y_j)=0$, $t^+_{\0,\G}(y_j)=1$ and $t^-_{\0,\G}(y_j)=0$, a contradiction. Thus, $m=n$. We now prove that $|U_k|=1$ {\it for all} $1\le k\le n$. If $|U_k|\ne 1$, then $k\ne j$ and $|V_k|=1$, say $V_k=\{q_k\}$. We have $\{\0(e_{x_1y_j}),\dots,\0(e_{x_ny_j})\}=\{e_{p_jy_1},\dots,e_{p_jy_n}\}$ and $\{\0(e_{x_1y_k}),\dots,\0(e_{x_ny_k})\}=\{e_{x_1q_k},\dots,e_{x_nq_k}\}$. Since $j\ne k$, these sets must be disjoint. But $e_{p_jq_k}$ belongs to their intersection, a contradiction. Thus, $|U_k|=1$ for all $1\le k\le n$. Replacing $\0$ by $\0'\circ\0$, where $\0'(e_{xy})=e_{\mu(y)\mu(x)}$ and $\mu$ is the anti-automorphism of $X$ which interchanges $x_i$ and $y_i$ for all $1\le i\le n$, we get the situation of Case 1, so that $\0'\circ\0\in\cP(X)$. Since $\0'\in\cP(\K_{m,n})$, we conclude that $\0\in\cP(\K_{m,n})$.
	\end{proof}
	
	\section*{Acknowledgements}
	The second author was partially supported by CNPq 404649/2018-1.
	
	
	
	\bibliography{bibl}{}

\begin{thebibliography}{10}

\bibitem{Baclawski72}
{\sc Baclawski, K.}
\newblock {Automorphisms and derivations of incidence algebras}.
\newblock {\em Proc. Amer. Math. Soc. 36}, 2 (1972), 351--356.

\bibitem{Bollobas}
{\sc Bollob{\'a}s, B.}
\newblock {\em {Modern graph theory}}, vol.~184 of {\em {Grad. Texts Math.}}
\newblock New York, NY: Springer, 1998.

\bibitem{BFS12}
{\sc Brusamarello, R., Fornaroli, {\'E}.~Z., and Santulo~Jr, E.~A.}
\newblock {Anti-automorphisms and involutions on (finitary) incidence
  algebras}.
\newblock {\em Linear Multilinear Algebra 60}, 2 (2012), 181--188.

\bibitem{BruFS}
{\sc Brusamarello, R., Fornaroli, {\'E}.~Z., and Santulo~Jr, E.~A.}
\newblock {Classification of involutions on finitary incidence algebras}.
\newblock {\em {Int. J. Algebra Comput.} 24}, 8 (2014), 1085--1098.

\bibitem{BFS15}
{\sc Brusamarello, R., Fornaroli, {\'E}.~Z., and Santulo~Jr, E.~A.}
\newblock {Multiplicative automorphisms of incidence algebras}.
\newblock {\em {Commun. Algebra} 43}, 2 (2015), 726--736.

\bibitem{BruL}
{\sc Brusamarello, R., and Lewis, D.~W.}
\newblock {Automorphisms and involutions on incidence algebras}.
\newblock {\em Linear Multilinear Algebra 59}, 11 (2011), 1247--1267.

\bibitem{Cao97}
{\sc {Cao, Y.}}
\newblock Automorphisms of certain {L}ie algebras of upper triangular matrices
  over a commutative ring.
\newblock {\em J. Algebra 189}, 2 (1997), 506--513.

\bibitem{Cecil}
{\sc Cecil, A.~J.}
\newblock Lie isomorphisms of triangular and block-triangular matrix algebras
  over commutative rings.
\newblock Master's thesis, University of Victoria, 2016.

\bibitem{dzD}
{\sc {{\DJ}okovi\'c, D. {\v Z}.}}
\newblock Automorphisms of the {L}ie algebra of upper triangular matrices over
  a connected commutative ring.
\newblock {\em J. Algebra 170}, 1 (1994), 101--110.

\bibitem{DN2016}
{\sc Dokuchaev, M., and Novikov, B.}
\newblock {On colimits over arbitrary posets}.
\newblock {\em {Glasg. Math. J.} 58}, 1 (2016), 219--228.

\bibitem{Draxler94}
{\sc Dr{\"a}xler, P.}
\newblock {Completely separating algebras}.
\newblock {\em {J. Algebra} 165}, 3 (1994), 550--565.

\bibitem{Drozd-Kolesnik07}
{\sc Drozd, Y., and Kolesnik, P.}
\newblock {Automorphisms of incidence algebras}.
\newblock {\em Comm. Algebra 35}, 12 (2007), 3851--3854.

\bibitem{FKS}
{\sc Fornaroli, {\'E}.~Z., Khrypchenko, M., and Santulo~Jr, E.~A.}
\newblock {Lie automorphisms of incidence algebras}.
\newblock {\em To appear in Proc. Amer. Math. Soc. {\tt
  (arXiv:2012.06661v3)}\/} (2020).

\bibitem{Hua51}
{\sc Hua, L.~K.}
\newblock {A theorem on matrices over a sfield and its applications}.
\newblock {\em J. Chinese Math. Soc. (N.S.) 1\/} (1951), 110--163.

\bibitem{Kh-aut}
{\sc Khripchenko, N.~S.}
\newblock {Automorphisms of finitary incidence rings}.
\newblock {\em Algebra and Discrete Math. 9}, 2 (2010), 78--97.

\bibitem{Martindale63}
{\sc {Martindale 3rd}, W.~S.}
\newblock Lie isomorphisms of primitive rings.
\newblock {\em Proc. Amer. Math. Soc. 14\/} (1963), 909--916.

\bibitem{Martindale69}
{\sc {Martindale 3rd}, W.~S.}
\newblock Lie isomorphisms of prime rings.
\newblock {\em Trans. Amer. Math. Soc. 142\/} (1969), 437--455.

\bibitem{Martindale69simple}
{\sc {Martindale 3rd}, W.~S.}
\newblock Lie isomorphisms of simple rings.
\newblock {\em J. Lond. Math. Soc. 44\/} (1969), 213--221.

\bibitem{Rival76}
{\sc Rival, I.}
\newblock {A fixed point theorem for finite partially orderes sets}.
\newblock {\em {J. Comb. Theory, Ser. A} 21\/} (1976), 309--318.

\bibitem{Rota64}
{\sc Rota, G.-C.}
\newblock {On the foundations of combinatorial theory. I. Theory of M{\"o}bius
  functions}.
\newblock {\em Z. Wahrscheinlichkeitstheorie und Verw. Gebiete 2}, 4 (1964),
  340--368.

\bibitem{Sp}
{\sc {Spiegel}, E.}
\newblock {On the automorphisms of incidence algebras}.
\newblock {\em {J. Algebra} 239}, 2 (2001), 615--623.

\bibitem{SpDo}
{\sc {Spiegel}, E., and {O'Donnell}, C.~J.}
\newblock {\em {Incidence Algebras}}.
\newblock New York, NY: Marcel Dekker, 1997.

\bibitem{St}
{\sc {Stanley}, R.}
\newblock {Structure of incidence algebras and their automorphism groups}.
\newblock {\em {Bull. Am. Math. Soc.} 76\/} (1970), 1236--1239.

\bibitem{Stanley}
{\sc Stanley, R.~P.}
\newblock {\em {Enumerative combinatorics. Vol. 1}}, vol.~49 of {\em {Camb.
  Stud. Adv. Math.}}
\newblock Cambridge: Cambridge University Press, 1997.

\end{thebibliography}
	\bibliographystyle{acm}

\end{document}